\documentclass[a4paper,english,sumlimits,reqno]{amsart}

\usepackage[T1]{fontenc}
\usepackage[utf8]{inputenc}
\usepackage{lmodern}

\usepackage{babel}

\usepackage{xifthen}
\usepackage{ifthenx}

\usepackage{xargs}

\usepackage{graphicx}
\usepackage[table,svgnames,x11names]{xcolor}

\usepackage[colorinlistoftodos,prependcaption,textsize=tiny]{todonotes}
\newcommandx{\todoin}[2][1=]{\todo[inline, caption={todo}, #1]{%
\begin{minipage}{\textwidth-20pt}#2\end{minipage}}}



\usepackage{amssymb}
\usepackage{amsmath}
\usepackage{amsthm}
\usepackage{mathtools}
\usepackage{exscale}
\usepackage{relsize}
\usepackage{bbm}
\usepackage{bm}
\usepackage{amsbsy}

\usepackage[mathcal]{eucal}
\usepackage{mathrsfs}

\usepackage{stmaryrd}

\setcounter{tocdepth}{1} 

\usepackage[all]{xy}
\newcommand{\vcxymatrix}[1]{\vcenter{\xymatrix{#1}}}

\usepackage{fp}

\usepackage[section]{placeins}
\usepackage{float}

\usepackage{enumerate}

\newcounter{proof}
\newenvironment{myproof}%
{\stepcounter{proof}\begin{proof}}%
{\end{proof}}%
\newcounter{proofstep}[proof]
\newenvironment{proofstep}[1][]%
{\refstepcounter{proofstep}\bigskip\par\noindent%
  \ifthenelse{\isempty{#1}}
    {\textsc{Step \theproofstep. }}
    {\textsc{#1.}}
  \noindent}%
{\par}%
\newcounter{proofcase}[proof]
{\refstepcounter{proofcase}\bigskip\par\noindent%
  \ifthenelse{\isempty{#1}}
    {\textsc{Case \theproofcase. }}
    {\textsc{#1.}}
  \noindent}%
{\par}%



\usepackage{hyperref}

\theoremstyle{plain}
\newtheorem{thm}{Theorem}[section]
\newtheorem*{thm*}{Theorem}

\newtheorem{cor}[thm]{Corollary}
\newtheorem{lem}[thm]{Lemma}

\theoremstyle{definition}
\newtheorem{dfn}[thm]{Definition}
\theoremstyle{remark}

\newtheorem{rem}[thm]{Remark}

\numberwithin{equation}{section}



\newcommandx{\textref}[2][1=]{\hyperref[#2]{#1\ref*{#2}}}
\newcommandx{\textrefp}[2][1=]{(\hyperref[#2]{#1\ref*{#2}})}



\DeclareMathOperator*{\Union}{\bigcup}

\newcommand{\cc}[1]{\ensuremath{\llbracket #1 \rrbracket}}




\newcommand{\dif}{\ensuremath{\, \mathrm d}}


\DeclareMathOperator{\Id}{Id}

\DeclareMathOperator{\spn}{span}


\DeclareMathOperator{\cond}{\mathbb{E}}








\begin{document}

\title[Direct sums of finite dimensional $SL^\infty_n$ spaces]{Direct sums of finite dimensional
  \bm{$SL^\infty_n$} spaces}

\author[R.~Lechner]{Richard Lechner}

\address{Richard Lechner, Institute of Analysis, Johannes Kepler University Linz, Altenberger
  Strasse 69, A-4040 Linz, Austria}

\email{Richard.Lechner@jku.at}

\date{\today}

\subjclass[2010]{46B25,
  46B26,
  60G46,
  46B07
}

\keywords{Classical Banach spaces, $SL^\infty$, factorization, primary, localization, combinatorics
  of colored dyadic intervals, quasi-diagonalization, projections}

\thanks{Supported by the Austrian Science Foundation (FWF) Pr.Nr.  P28352}

\begin{abstract}
  $SL^\infty$ denotes the space of functions whose square function is in $L^\infty$, and the
  subspaces $SL^\infty_n$, $n\in\mathbb{N}$, are the finite dimensional building blocks of
  $SL^\infty$.

  We show that the identity operator $\Id_{SL^\infty_n}$ on $SL^\infty_n$ well factors through
  operators $T : SL^\infty_N\to SL^\infty_N$ having large diagonal with respect to the standard Haar
  system.  Moreover, we prove that $\Id_{SL^\infty_n}$ well factors either through any given
  operator $T : SL^\infty_N\to SL^\infty_N$, or through $\Id_{SL^\infty_N}-T$.  Let $X^{(r)}$ denote
  the direct sum $\bigl(\sum_{n\in\mathbb{N}_0} SL^\infty_n\bigr)_r$, where $1\leq r \leq \infty$.
  Using Bourgain's localization method, we obtain from the finite dimensional factorization result
  that for each $1\leq r\leq \infty$, the identity operator $\Id_{X^{(r)}}$ on $X^{(r)}$ factors
  either through any given operator $T : X^{(r)}\to X^{(r)}$, or through $\Id_{X^{(r)}} - T$.
  Consequently, the spaces $\bigl(\sum_{n\in\mathbb{N}_0} SL^\infty_n\bigr)_r$,
  $1\leq r\leq \infty$, are all primary.
\end{abstract}

\maketitle

\makeatletter
\providecommand\@dotsep{5}
\def\listtodoname{List of Todos}
\def\listoftodos{\@starttoc{tdo}\listtodoname}
\makeatother

\section{Introduction}\label{sec:intro}

\noindent
Let $\mathcal{D}$ denote the collection of \emph{dyadic intervals} contained in the unit
interval~$[0,1)$; it is given by
\begin{equation*}
  \mathcal{D} = \{[(k-1)2^{-n},k2^{-n}) : n\in \mathbb{N}_0, 1\leq k\leq 2^n\}.
\end{equation*}
Let $|\cdot|$ denote the Lebesgue measure.  For any $N\in\mathbb{N}_0$ we put
\begin{equation}\label{eq:dyadic-intervals}
  \mathcal{D}_N = \{I\in\mathcal{D} : |I|=2^{-N}\}
  \qquad\text{and}\qquad
  \mathcal{D}^N = \bigcup_{n=0}^N\mathcal{D}_n.
\end{equation}

We denote the $L^\infty$-normalized \emph{Haar function} supported on $I\in\mathcal{D}$ by $h_I$;
i.e. if $I_0,I_1\in\mathcal{D}$ are such that $\inf I_0 < \inf I_1$ and $I_0\cup I_1=I$, then
\begin{equation*}
  h_I=\chi_{I_0}-\chi_{I_1},
\end{equation*}
where $\chi_A$ denotes the characteristic function of $A\subset [0,1)$.  The \emph{Rademacher
  functions} $r_n$, $n\in\mathbb{N}_0$ are given by
\begin{equation*}
  r_n = \sum_{I\in\mathcal{D}_n} h_I,
  \qquad n\in\mathbb{N}_0.
\end{equation*}

The \emph{non-separable} Banach space $SL^\infty$ is given by
\begin{equation}\label{eq:sl-infty:space}
  SL^\infty = \Big\{f = \sum_{I\in\mathcal{D}} a_I h_I\in L^2 : \|f\|_{SL^\infty} < \infty\Big\},
\end{equation}
equipped with the norm
\begin{equation}\label{eq:sl-infty:norm}
  \Big\| \sum_{I\in\mathcal{D}} a_I h_I \Big\|_{SL^\infty}
  = \Big\| \big(\sum_{I\in\mathcal{D}} a_I^2 h_I^2\big)^{1/2} \Big\|_{L^\infty}.
\end{equation}
(To see that $SL^\infty$ is non-separable, consider any infinite collection of pairwise disjoint
dyadic intervals $\{ I_j : j\in\mathbb{N}\}\subset \mathcal{D}$, and embed $\ell^\infty$ into
$SL^\infty$ by $e_j\mapsto h_{I_j}$, where $e_j$ denotes the $j^{\text{th}}$ standard unit vector in
$\ell^\infty$.)  We want to emphasize that throughout this paper, any series in $SL^\infty$ merely
represents the vector of coefficients, and it does \emph{not} indicate any kind of convergence.  For
variants of the space $SL^\infty$, we refer the reader to~\cite{jones:mueller:2004}.  The
\emph{Hardy space} $H^1$ is the completion of
\begin{equation}\label{eq:Hp-space}
  \spn\{ h_I : I \in \mathcal{D} \}
\end{equation}
under the norm
\begin{equation}\label{eq:Hp-norm}
  \big\| \sum_{I\in \mathcal{D}} a_I h_I \big\|_{H^1}
  = \int_0^1 \Big(
  \sum_{I\in \mathcal{D}} a_I^2 h_I^2(x)
  \Big)^{1/2}
  \dif x
  .
\end{equation}

We define the \emph{duality pairing}
$\langle \cdot, \cdot \rangle : SL^\infty\times H^1\to \mathbb{R}$ by
\begin{equation}\label{eq:scalar-product}
  \langle f, g\rangle = \int_0^1 f(x)g(x)\dif x,
  \qquad f\in SL^\infty,\ g\in H^1.
\end{equation}
and note the well-known and obvious inequality (see e.g.~\cite{garsia:1973}):
\begin{equation}\label{eq:bracket-estimate}
  |\langle f, g\rangle| \leq \|f\|_{SL^\infty} \|g\|_{H^1},
  \qquad f\in SL^\infty,\ g\in H^1.
\end{equation}

We call a bounded linear map between Banach spaces an \emph{operator}.
By~\eqref{eq:bracket-estimate}, the operator $J : SL^\infty\to (H^1)^*$ defined by
$g\mapsto \bigl(f\mapsto \langle f, g \rangle \bigr)$ has norm $1$.  Now, let
$I : H^1\to (H^1)^{**}$ denote the canonical embedding.  Hence, for any given operator
$T : SL^\infty\to SL^\infty$, the operator $S : H^1\to (SL^\infty)^*$, defined by $S:=T^*J^*I$ is
bounded by $\|T\|$ and satisfies
\begin{equation}\label{eq:adjoint}
  \langle Sg, f\rangle = \langle g, Tf\rangle,
  \qquad f\in SL^\infty,\ g\in H^1,
\end{equation}
where the duality pairing on the left hand side is the canonical duality pairing between
$(SL^\infty)^*$ and $SL^\infty$, and the duality pairing on the right hand side is given
by~\eqref{eq:scalar-product}.  For the sake of brevity, we shall write $T^*$ instead of $T^*J^*I$.

Given $n\in\mathbb{N}_0$, we define the following finite dimensional spaces:
\begin{equation}\label{eq:finite-dimensional-spaces}
  SL^\infty_n = \spn\{h_I : I\in\mathcal{D}^n\}\subset SL^\infty
  \quad\text{and}\quad
  H^1_n = \spn\{h_I : I\in\mathcal{D}^n\}\subset H^1.
\end{equation}
Let $n\in\mathbb{N}_0$, $\delta > 0$, and let $T : SL^\infty_n\to SL^\infty_n$ denote an operator.
We say that $T$ has a \emph{$\delta$-large diagonal} with respect to the Haar system
$(h_I : I\in\mathcal{D}^n)$ if
\begin{equation}\label{eq:large-diag}
  |\langle T h_I, h_I \rangle|
  \geq \delta |I|,
  \qquad I\in \mathcal{D}^n.
\end{equation}
If unambiguous, we simply say $T$ has large diagonal without explicitly specifying $\delta$ and the
system of functions.

\section{Main Results}\label{sec:results}

\noindent
Theorem~\ref{thm:local-factor} asserts that the identity operator on $SL^\infty_n$ factors through any
operator $T : SL^\infty_N\to SL^\infty_N$ having large diagonal, where $N$ depends (among other
parameters) on $n$ and $\|T\|$.  It is the first step towards a factorization result for direct sums
of $SL^\infty_n$ spaces.  Theorem~\ref{thm:local-factor} is a finite dimensional quantitative version of
the infinite dimensional factorization result~\cite[Theorem~2.1]{lechner:2016:factor-SL}.
\begin{thm}\label{thm:local-factor}
  Let $n\in\mathbb{N}_0$, $\Gamma,\eta > 0$ and $\delta > 0$.  Then there exists an integer
  $N = N(n,\Gamma,\eta,\delta)$, such that for any operator $T : SL^\infty_N\to SL^\infty_N$ with
  $\|T\|\leq \Gamma$ and
  \begin{equation*}
    |\langle T h_K, h_K \rangle| \geq \delta |K|,
    \qquad K\in \mathcal{D}^N,
  \end{equation*}
  there exist operators $R : SL^\infty_n\to SL^\infty_N$ and $S : SL^\infty_N\to SL^\infty_n$ such
  that the diagram
  \begin{equation}\label{eq:factor}
    \vcxymatrix{SL^\infty_n \ar[r]^\Id \ar[d]_R & SL^\infty_n\\
      SL^\infty_N \ar[r]_T & SL^\infty_N \ar[u]_S}
  \end{equation}
  is commutative.  Moreover, the operators $R$ and $S$ can be chosen in such way that they satisfy
  $\|R\|\|S\| \leq (1+\eta)/\delta$.
\end{thm}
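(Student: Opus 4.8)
The plan is to build the factorization through $T$ by a stabilization/pigeonhole argument that produces, inside the large space $SL^\infty_N$, a copy of $SL^\infty_n$ on which $T$ behaves (up to a small error) like a scalar multiple of a projection onto that copy. Concretely, for a suitable finite family of pairwise disjoint dyadic intervals $\{Q_I : I \in \mathcal D^n\}$ arranged so that the map $h_I \mapsto |Q_I|^{-1}|I|\, (\text{a faithful Haar image supported on }Q_I)$ is an isometric embedding $R : SL^\infty_n \to SL^\infty_N$ (this uses that $SL^\infty$ norms are computed pointwise via square functions, so disjointly supported Haar blocks combine isometrically), one examines the compression $P_{\mathcal B} T R$, where $P_{\mathcal B}$ is the natural conditional-expectation-type projection onto the span of the chosen blocks. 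The diagonal hypothesis $|\langle T h_K, h_K\rangle| \ge \delta |K|$ for \emph{all} $K \in \mathcal D^N$ is what prevents the diagonal from being annihilated along the way.

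First I would fix $n$, $\Gamma$, $\eta$, $\delta$ and run a Ramsey/pigeonhole iteration on the tree of dyadic intervals: choosing $N$ huge, one greedily selects a large sub-collection of dyadic intervals of small measure on which the "off-diagonal" action of $T$ (the coefficients $\langle T h_K, h_L\rangle$ for $K \ne L$ in the selected family, together with the sign and approximate magnitude of $\langle T h_K, h_K \rangle$) is stabilized; here $N = N(n,\Gamma,\eta,\delta)$ is extracted from the number of pigeonhole steps, each step costing a bounded branching factor depending only on $\Gamma,\delta$ and a discretization parameter governed by $\eta$. This is the finite-dimensional analogue of the infinite iteration in \cite[Theorem~2.1]{lechner:2016:factor-SL}; the quantitative bookkeeping of how many levels of refinement are needed to get the errors below a threshold $\varepsilon = \varepsilon(\eta,\delta)$ is the technical heart. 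Second, on the stabilized family, $T$ restricted to the block span is, after normalizing, within $\varepsilon$ of $\theta\cdot(\text{identity on the block span})$ for some scalar $\theta$ with $|\theta| \ge \delta$ — here one must also handle the \emph{sign} of the diagonal entry, which may flip, by absorbing it into $R$ or $S$ (replacing $h_I$ by $-h_I$ on the relevant block).

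Third, I would define $R : SL^\infty_n \to SL^\infty_N$ to be the (sign-corrected) isometric block embedding, so $\|R\| = 1$, and define $S : SL^\infty_N \to SL^\infty_n$ as $\theta^{-1}$ times the block projection followed by the inverse of $R$ onto its image; a Neumann-series correction absorbs the $\varepsilon$-error so that $S T R = \Id_{SL^\infty_n}$ exactly. The projection $P_{\mathcal B}$ onto a span of disjointly supported Haar blocks inside $SL^\infty$ has norm $1$ (again because the $SL^\infty$ norm is a pointwise supremum of square functions, and the block projection only deletes some of the squared terms), so $\|S\| \le (1+\eta')/|\theta| \le (1+\eta')/\delta$; choosing the discretization/stabilization thresholds so that $\eta'\le\eta$ gives the required estimate $\|R\|\|S\| \le (1+\eta)/\delta$. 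The main obstacle I anticipate is the combinatorial stabilization step: one must arrange the selected intervals so that the matrix of $T$ against the selected Haar functions is \emph{simultaneously} close to a scalar on the diagonal and small off it, and the off-diagonal control is not automatic from $\|T\|\le\Gamma$ alone — it requires exploiting the geometry of $SL^\infty$ (e.g.\ that widely separated dyadic intervals interact weakly in the square-function norm) together with the large-diagonal hypothesis to run the greedy selection, and then carefully counting to see that $N$ can be chosen as an explicit function of $(n,\Gamma,\eta,\delta)$.
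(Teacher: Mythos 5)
There is a genuine gap, and it sits at the very first step: the embedding $R$ you propose, sending each $h_I$, $I\in\mathcal{D}^n$, to a block supported on \emph{pairwise disjoint} intervals $Q_I$, is not an isomorphic copy of $SL^\infty_n$ with constant independent of $n$. The norm of $\sum_{I\in\mathcal{D}^n} a_I h_I$ in $SL^\infty_n$ is $\sup_x \bigl(\sum_{I\ni x} a_I^2\bigr)^{1/2}$, so Haar functions on \emph{nested} intervals add in square; if the images of the $h_I$ have disjoint supports, the square function of the image is just $\max_I |a_I|$ (times the block normalization), i.e.\ you have embedded $\ell^\infty_{2^{n+1}-1}$, not $SL^\infty_n$. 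Concretely, $\bigl\|\sum_{I\supset I_0,\, I\in\mathcal{D}^n} h_I\bigr\|_{SL^\infty}=\sqrt{n+1}$ while its image under your $R$ has norm of order $1$. Consequently $S$, which must invert $R$ on its image, picks up a factor $\sqrt{n+1}$, and $\|R\|\|S\|$ is at least of order $\sqrt{n+1}/\delta$, not $(1+\eta)/\delta$. The paper avoids exactly this by building a Gamlen--Gaudet block basis $b_I=\sum_{K\in\mathcal{B}_I}\varepsilon_K h_K$ whose supports \emph{reproduce the nested dyadic structure}, i.e.\ $(\mathcal{B}_I : I\in\mathcal{D}^n)$ satisfies Jones' compatibility conditions \textrefp[J]{enu:j1}--\textrefp[J]{enu:j4}; Theorem~\ref{thm:projection} then gives a complemented copy of $SL^\infty_n$ with isomorphism and projection constants close to $1$, uniformly in $n$.

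The second weak point is the stabilization step. Off-diagonal smallness cannot be obtained from ``widely separated dyadic intervals interact weakly'': boundedness $\|T\|\le\Gamma$ gives no decay of $\langle Th_K,h_L\rangle$ in terms of separation, and no Ramsey-type scalar stabilization is needed or used. The paper's mechanism is the combinatorial Lemma~\ref{lem:comb-1}: at stage $i_0$ the finitely many functions $Tb_j\in SL^\infty$ and functionals $T^*b_j\in (SL^\infty)^*$ already constructed define a local frequency weight $\omega$, and at a suitable level (bounded in terms of $\rho,\tau$, which is what makes $N=N(n,\Gamma,\eta,\delta)$ explicit) almost all of each relevant set is covered by intervals $K$ with $\omega(K)\le\tau|K|$; choosing $\mathcal{B}_{i_0}$ among these yields the off-diagonal bound $\eta 4^{-i}\|b_i\|_2^2$, which is what the Neumann-series step actually needs. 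Likewise, one does not normalize to a single scalar $\theta$: it suffices that each diagonal entry satisfies $\langle Tb_i,b_i\rangle\ge\delta\|b_i\|_2^2$, and the almost-inverse $U$ divides by the individual entries. Keeping these entries large after blocking is itself a nontrivial point, handled in the paper by first reducing (via $1$-unconditionality) to $\langle Th_K,h_K\rangle\ge\delta|K|$ and then choosing the signs $\varepsilon_K$ inside each block probabilistically so the cross terms average out (Andrew's technique); absorbing a sign into $R$ or $S$ per block does not address these intra-block cross terms. Your final Neumann-series correction and the bound $\|S\|\le(1+\eta)/\delta$ match the paper's concluding step, but without the nested block construction and the combinatorial selection the argument does not establish the theorem.
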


Our next result is the local factorization Theorem~\ref{thm:local-primary}.  The key difference
between Theorem~\ref{thm:local-primary} and Theorem~\ref{thm:local-factor} is that in
Theorem~\ref{thm:local-primary} we do not require the operator $T$ to have large diagonal.  To
compensate, we will use additional combinatorics to select a large subset of intervals
$\mathcal{L}$, on which either $T$ or $\Id-T$ has large diagonal (see
Section~\ref{subsec:local-primary-prep} and~\ref{subsec:local-primary}).  It is the choice of that
$\mathcal{L}$ which determines whether the identity factors through $T$ or $\Id-T$ (see
Section~\ref{subsec:local-primary}).
\begin{thm}\label{thm:local-primary}
  Let $n\in\mathbb{N}_0$ and $\Gamma,\eta > 0$.  Then there exists an integer
  $N = N(n,\Gamma,\eta)$, such that for any operator $T : SL^\infty_N\to SL^\infty_N$ with
  $\|T\|\leq \Gamma$, we can find operators $R : SL^\infty_n\to SL^\infty_N$ and
  $S : SL^\infty_N\to SL^\infty_n$, such that for either $H=T$ or $H=\Id-T$ the diagram
  \begin{equation}\label{eq:thm:local-primary}
    \vcxymatrix{SL^\infty_n \ar[r]^\Id \ar[d]_R & SL^\infty_n\\
      SL^\infty_N \ar[r]_H & SL^\infty_N \ar[u]_S}
  \end{equation}
  is commutative.  Moreover, it is possible to choose the operators $R$ and $S$ in such way that
  they satisfy $\|R\|\|S\| \leq 2+\eta$.
\end{thm}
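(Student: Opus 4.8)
The plan is to deduce Theorem~\ref{thm:local-primary} from Theorem~\ref{thm:local-factor} by a pigeonhole argument on colored dyadic intervals, exploiting the identity
\[
  \langle Th_K,h_K\rangle + \langle(\Id-T)h_K,h_K\rangle = \langle h_K,h_K\rangle = |K|,
  \qquad K\in\mathcal{D}^N,
\]
which forces at least one of $|\langle Th_K,h_K\rangle|$ and $|\langle(\Id-T)h_K,h_K\rangle|$ to be $\ge|K|/2$. Colouring each $K$ according to which of the two operators attains this (and according to the sign of the corresponding diagonal entry), a sufficiently massive monochromatic subfamily will carry a faithful copy of some $SL^\infty_M$ on which a single fixed operator $H\in\{T,\Id-T\}$ has large diagonal; it is the colour class that dictates whether $H=T$ or $H=\Id-T$, hence whether $\Id_{SL^\infty_n}$ factors through $T$ or through $\Id-T$.

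First I would fix parameters. Given $n,\Gamma,\eta$, choose $\varepsilon>0$ and $\eta_0>0$ small (to be pinned down at the end), set $\Gamma'=(1+\varepsilon)^2(\Gamma+1)$, and let $M=N(n,\Gamma',\eta_0,\tfrac12-\varepsilon)$ be the integer furnished by Theorem~\ref{thm:local-factor}. Then let $N$ be large enough — depending only on $n,\Gamma,\eta$ — that the combinatorial machinery of Section~\ref{subsec:local-primary-prep} applies with input $M$ and the required number of colours: any such colouring of $\mathcal{D}^N$ admits a monochromatic faithful Haar system $(\widetilde h_B:B\in\mathcal{B})$, indexed by a full binary tree $\mathcal{B}$ of depth $M$, whose linear span sits in $SL^\infty_N$ isometrically (or at worst $(1+\varepsilon)$-isomorphically) as a copy of $SL^\infty_M$ and is the range of a norm-one projection $P$ on $SL^\infty_N$. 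Moreover the extraction is to be arranged, via the quasi-diagonalization of Section~\ref{subsec:local-primary-prep}, so that the mutual interactions of the Haar functions making up a single block $\widetilde h_B$ contribute at most $\varepsilon\|\widetilde h_B\|_{L^2}^2$ to $\langle H\widetilde h_B,\widetilde h_B\rangle$.

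Now, given $T$ with $\|T\|\le\Gamma$, colour $K\in\mathcal{D}^N$ by the pair consisting of (i) which of $T,\ \Id-T$ satisfies $|\langle\,\cdot\,h_K,h_K\rangle|\ge|K|/2$ and (ii) the sign of that diagonal entry, breaking ties arbitrarily. Applying the extraction yields a monochromatic faithful Haar system together with the corresponding operator $H\in\{T,\Id-T\}$, which satisfies $\|H\|\le\Gamma+1$. Since all Haar functions $h_K$ occurring in a fixed block $\widetilde h_B$ share the same colour, the entries $\langle Hh_K,h_K\rangle$ have a common sign and $\sum_K|\langle Hh_K,h_K\rangle|\ge\tfrac12\sum_K|K|=\tfrac12\|\widetilde h_B\|_{L^2}^2$; combined with the $\varepsilon$-quasidiagonality this gives $|\langle H\widetilde h_B,\widetilde h_B\rangle|\ge(\tfrac12-\varepsilon)\|\widetilde h_B\|_{L^2}^2$ for every $B\in\mathcal{B}$. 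Transporting through the identification $SL^\infty_M\cong\spn\{\widetilde h_B:B\in\mathcal{B}\}$ and compressing by $P$ (which is self-adjoint for the pairing~\eqref{eq:scalar-product} and fixes each $\widetilde h_B$) produces $\widetilde H:SL^\infty_M\to SL^\infty_M$ with $\|\widetilde H\|\le\Gamma'$ and $(\tfrac12-\varepsilon)$-large diagonal with respect to the standard Haar system of $SL^\infty_M$. Feeding $\widetilde H$ into Theorem~\ref{thm:local-factor} gives $\widetilde R:SL^\infty_n\to SL^\infty_M$ and $\widetilde S:SL^\infty_M\to SL^\infty_n$ with $\Id_{SL^\infty_n}=\widetilde S\widetilde H\widetilde R$ and $\|\widetilde R\|\,\|\widetilde S\|\le(1+\eta_0)/(\tfrac12-\varepsilon)$; composing $\widetilde R$ with the embedding $SL^\infty_M\embed SL^\infty_N$ and $\widetilde S$ with $P$ yields $R,S$ making~\eqref{eq:thm:local-primary} commute for $H=T$ or $H=\Id-T$, with $\|R\|\,\|S\|\le(1+\varepsilon)^2(1+\eta_0)/(\tfrac12-\varepsilon)$, and a small enough initial choice of $\varepsilon,\eta_0$ makes the right-hand side $\le 2+\eta$.

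The main obstacle is the second paragraph: one must choose $N$ and organize the colouring so that a \emph{monochromatic} faithful Haar system of the prescribed size is unavoidable — a Ramsey-type theorem for colored dyadic intervals — and, at the same time, so that on the extracted system the within-block interactions of $H$ are negligible, which is what keeps the effective diagonal bound at essentially $\tfrac12$ rather than something smaller and thereby delivers the constant $2+\eta$ rather than a larger one. This is precisely the combinatorial and quasi-diagonalization content of Section~\ref{subsec:local-primary-prep} and Section~\ref{subsec:local-primary}; once it is in place, the remainder is the constant bookkeeping sketched above on top of Theorem~\ref{thm:local-factor}.
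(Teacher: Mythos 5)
Your overall plan (pigeonhole between $T$ and $\Id-T$, extract a monochromatic faithful system, then invoke the large-diagonal factorization) is in the right spirit, but the step you delegate to ``the combinatorial machinery of Section~\ref{subsec:local-primary-prep}'' does not exist in the form you need, and this is precisely the hard point of the theorem. You colour the individual Haar functions $h_K$, $K\in\mathcal{D}^N$, and then require an extraction of blocks $\widetilde h_B=\sum_{K\in\mathcal{B}_B}\pm h_K$ which is simultaneously (a) monochromatic, so that one fixed $H\in\{T,\Id-T\}$ has $|\langle Hh_K,h_K\rangle|\geq|K|/2$ with coherent sign on every $K$ occurring in every block, and (b) such that the \emph{within-block} cross terms $\sum_{K\neq K'}\pm\langle Hh_K,h_{K'}\rangle$ contribute at most $\varepsilon\|\widetilde h_B\|_2^2$. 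Section~\ref{subsec:local-primary-prep} contains only the Carleson-constant lemmas (Lemma~\ref{lem:comb-2}, Lemma~\ref{lem:comb-3}); it says nothing about diagonalization. The quasi-diagonalization Theorem~\ref{thm:quasi-diag} controls only the interactions between \emph{distinct} blocks (via Lemma~\ref{lem:comb-1} applied to $Tb_j^{(\varepsilon)}$, $T^*b_j^{(\varepsilon)}$ for previously built blocks); the within-block term is handled there by Andrew's random-sign argument, and its selection of intervals ranges over all of $\mathcal{D}^N$, not over a prescribed colour class. To carry out your plan you would have to re-prove a version of Theorem~\ref{thm:quasi-diag} \emph{relative to} a colour class of large Carleson constant, i.e. merge Lemma~\ref{lem:comb-1} with the generation/density Lemmas~\ref{lem:comb-2}--\ref{lem:comb-3} so that the almost-covering intervals are drawn from $\mathcal{L}$ only. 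That hybrid statement is neither in the paper nor supplied by you, so as written the key inequality $|\langle H\widetilde h_B,\widetilde h_B\rangle|\geq(\tfrac12-\varepsilon)\|\widetilde h_B\|_2^2$ is unproved; without it the compressed operator $\widetilde H$ need not have large diagonal and Theorem~\ref{thm:local-factor} cannot be applied.

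The paper's proof avoids this obstruction by reversing the order of the two operations. It first applies Theorem~\ref{thm:quasi-diag} with $\delta=0$ (so no large-diagonal hypothesis and no sign selection are needed, and only the inter-block off-diagonal estimates are used) to produce a block basis $(b_K:K\in\mathcal{D}^{n_1})$ satisfying Jones' conditions, and only \emph{then} pigeonholes at the level of these blocks: since $\langle Tb_K,b_K\rangle+\langle(\Id-T)b_K,b_K\rangle=\|b_K\|_2^2$, for each $K$ one of the two quantities is $\geq\tfrac12\|b_K\|_2^2$ and automatically positive, so no sign bookkeeping and no within-block problem ever arise. The Carleson-constant pigeonhole then selects which of the two collections is rich, Lemmas~\ref{lem:comb-2} and~\ref{lem:comb-3} build a Gamlen--Gaudet tree $(\mathcal{C}_I:I\in\mathcal{D}^n)$ inside it, the superblocks $\widetilde b_I=\sum_{B_K\in\mathcal{C}_I}b_K$ inherit $\delta=\tfrac12-\eta_1$ large diagonal for the chosen $H$ because the error terms are exactly the already-controlled inter-block interactions, and the final factorization repeats the argument of Theorem~\ref{thm:local-factor} for this block basis (rather than invoking it as a black box, since the system is no longer the standard Haar basis and the normalizations carry the factor $|C_{[0,1)}|$). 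If you reorganize your argument in this order --- diagonalize first with $\delta=0$, pigeonhole on the blocks second --- your constant bookkeeping at the end goes through essentially as you wrote it.
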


Recall that a Banach space $X$ is \emph{primary} if for every bounded projection $Q : X\to X$,
either $Q(X)$ or $(\Id - Q)(X)$ is isomorphic to $X$ (see e.g.~\cite{lindenstrauss-tzafriri:1977}).
In Theorem~\ref{thm:primary} we tie the local results of Theorem~\ref{thm:local-primary} for the spaces
$SL^\infty_n$, $n\in\mathbb{N}_0$ together, to obtain factorization results in
$X^{(r)}=\bigl(\sum_{n\in\mathbb{N}_0} SL^\infty_n \bigr)_r$, $1 \leq r \leq \infty$.  Specifically,
we obtain that all the spaces $X^{(r)}$, $1 \leq r \leq \infty$, are primary.  Moreover, in
Section~\ref{subsec:isos}, we will show that $SL^\infty$ is isomorphic to $X^{(\infty)}$; consequently,
$SL^\infty$ is primary, as well.
\begin{thm}\label{thm:primary}
  Let $1 \leq r \leq \infty$, put $X^{(r)}=\bigl(\sum_{n\in\mathbb{N}_0} SL^\infty_n \bigr)_r$ and
  let $T : X^{(r)}\to X^{(r)}$ denote an operator.  Then for any $\eta > 0$, there exist operators
  $R, S : X^{(r)}\to X^{(r)}$ such that for either $H = T$ or $H = \Id_{X^{(r)}} - T$ the diagram
  \begin{equation}\label{eq:thm:primary}
    \vcxymatrix{
      X^{(r)} \ar[rr]^{\Id_{X^{(r)}}} \ar[d]_R && X^{(r)}\\
      X^{(r)} \ar[rr]_H && X^{(r)} \ar[u]_S
    }
  \end{equation}
  is commutative.  The operators $R$ and $S$ can be chosen such that $\|R\| \|S\| \leq 1 + \eta$.
  Consequently, the spaces $\bigl(\sum_{n\in\mathbb{N}_0} SL^\infty_n \bigr)_r$,
  $1\leq r\leq \infty$, as well as $SL^\infty$ are all primary.
\end{thm}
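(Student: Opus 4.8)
The plan is to deduce Theorem~\ref{thm:primary} from the local factorization Theorem~\ref{thm:local-primary} via Bourgain's localization technique, exactly as announced in the introduction. First I would fix $1\leq r\leq\infty$, an operator $T:X^{(r)}\to X^{(r)}$, and $\eta>0$. Write $\Gamma=\|T\|$ and let $P_n:X^{(r)}\to SL^\infty_n$ be the canonical norm-one coordinate projection onto the $n$-th summand, with $U_n:SL^\infty_n\to X^{(r)}$ the canonical isometric inclusion; then $\sum_n U_nP_n$ acts as the identity in the appropriate (pointwise / weak$^*$, for $r=\infty$) sense. For each $n$ the compression $T_n:=P_n\, T\, U_n$ is an operator on... wait — one must be slightly careful: the localized operator whose diagonal we control should live on a single block $SL^\infty_N$. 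So for each $N$ consider $T_N := P_N\, T\, U_N : SL^\infty_N\to SL^\infty_N$, which has $\|T_N\|\leq\Gamma$. Apply Theorem~\ref{thm:local-primary}: given $n$ and $\eta'>0$ (to be chosen), there is $N=N(n,\Gamma,\eta')$ and operators $R_n:SL^\infty_n\to SL^\infty_N$, $S_n:SL^\infty_N\to SL^\infty_n$ with $\|R_n\|\|S_n\|\leq 2+\eta'$ and, for $H_n\in\{T_N,\Id-T_N\}$ (the choice depending on $n$), $S_n H_n R_n=\Id_{SL^\infty_n}$.

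Next I would run this along a rapidly increasing sequence of dimensions. Choose $n_0<n_1<n_2<\cdots$ and corresponding $N_0<N_1<\cdots$ (with $N_k=N(n_k,\Gamma,\eta_k)$ from the local theorem, and the $N_k$ spaced out so that the blocks $SL^\infty_{N_k}$ are "independent") such that the sequence $(SL^\infty_{n_k})_k$ recaptures a copy of $X^{(r)}$: indeed $\bigl(\sum_k SL^\infty_{n_k}\bigr)_r$ is isometric to $X^{(r)}$ provided $(n_k)$ is chosen to contain, with infinite multiplicity, every value of $n\in\mathbb N_0$ — this is a standard "every infinite direct sum of the building blocks is the whole sum" argument for $(\sum SL^\infty_n)_r$, using that each $SL^\infty_m$ is itself $1$-complemented in $SL^\infty_{m'}$ for $m'\geq m$ and a Pełczyński decomposition / explicit rearrangement. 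By the pigeonhole principle there is an infinite subset $M\subseteq\mathbb N$ on which the alternative from Theorem~\ref{thm:local-primary} is constant, say $H_{n_k}=T_{N_k}$ for all $k\in M$ (the case $H_{n_k}=\Id-T_{N_k}$ is symmetric and yields the $\Id_{X^{(r)}}-T$ half of the conclusion). Passing to $M$, relabel so that it holds for all $k$.

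Then I would assemble the global $R$ and $S$. Define $R:X^{(r)}\to X^{(r)}$ on the copy $\bigl(\sum_k SL^\infty_{n_k}\bigr)_r\cong X^{(r)}$ by $R=\sum_k U_{N_k} R_{n_k} P_{n_k}$, and $S:X^{(r)}\to X^{(r)}$ by $S=\sum_k U_{n_k} S_{n_k} P_{N_k}$, interpreting the sums coordinatewise (bounded because the blocks $SL^\infty_{N_k}$ occur in distinct summands of $X^{(r)}$, so the $r$-norm of the image is controlled blockwise; here one uses $\sup_k\|R_{n_k}\|,\sup_k\|S_{n_k}\|<\infty$, which one arranges by splitting $2+\eta'$ multiplicatively, e.g. $\|R_{n_k}\|\le\sqrt{2+\eta'}$ and $\|S_{n_k}\|\le\sqrt{2+\eta'}$ after rescaling). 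The crucial point is that $P_{N_k}\,T\,U_{N_k}=T_{N_k}$ picks out exactly the diagonal block, while the off-diagonal contributions $P_{N_j} T U_{N_k}$ for $j\neq k$ are killed because $S$ only reads the block $N_k$ from the $k$-th input coordinate; thus $S\,T\,R=\sum_k U_{n_k} S_{n_k}\,(P_{N_k} T U_{N_k})\,R_{n_k} P_{n_k}=\sum_k U_{n_k} S_{n_k} T_{N_k} R_{n_k} P_{n_k}=\sum_k U_{n_k}P_{n_k}=\Id$ on the copy of $X^{(r)}$. Finally I would normalize: the above gives $\|R\|\|S\|\leq 2+\eta'$; to reach $1+\eta$ one inserts a diagonal scaling or, more cleanly, observes that the factorization constant can be pushed to $1+\eta$ by first composing with an almost-isometric self-embedding of $X^{(r)}$ into one of its "far-out tails" where the local constants from Theorem~\ref{thm:local-primary} — which are $2+\eta'$ for a single block but average out — can be traded down; concretely, choosing the multiplicities and a convexity/averaging over many blocks lets the effective product constant tend to $1$. (Alternatively, one notes $SL^\infty_0=\mathbb R$ is trivial and a short separate argument handles the constant.) The last ingredient is the isomorphism $SL^\infty\cong X^{(\infty)}$, which I would prove in Section~\ref{subsec:isos} by a Pełczyński decomposition: $SL^\infty$ contains a $1$-complemented copy of $X^{(\infty)}$ (split $\mathcal D$ into infinitely many order-isomorphic pieces carrying disjointly supported copies of each $SL^\infty_n$) and conversely $X^{(\infty)}$ contains a complemented copy of $SL^\infty$, whence $SL^\infty\cong X^{(\infty)}$ since both are isomorphic to their $\ell^\infty$-sums.

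The main obstacle I expect is twofold. First, getting the factorization constant all the way down to $1+\eta$ rather than the $2+\eta$ coming directly from Theorem~\ref{thm:local-primary}: this requires the localization to be set up so that the "$\Id$ vs $\Id-T$" choice, though made per block, does not force a loss of the factor $2$ globally — one averages over a large number $K$ of consecutive blocks all giving the same alternative and uses a $\tfrac1K$-type estimate, together with the fact that a sum of $K$ suitably scaled block factorizations, renormalized, has product constant $\to1$. Secondly, making precise, for $r=\infty$, the sense in which $\sum_k U_kP_k$ reconstructs the identity and in which the assembled series for $R$, $S$, and $S T R$ converge — here everything is genuinely coordinatewise and bounded, with no convergence issue in the Banach-space topology because each output coordinate is a finite expression, but this must be stated carefully so that the non-separability of $X^{(\infty)}$ (and of $SL^\infty$) causes no trouble. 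Everything else — the pigeonhole on the binary alternative, the blockwise norm estimates, and the standard decomposition identifying $\bigl(\sum_k SL^\infty_{n_k}\bigr)_r$ with $X^{(r)}$ and $X^{(\infty)}$ with $SL^\infty$ — is routine.
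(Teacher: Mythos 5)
Your assembly step contains a genuine gap. With $R=\sum_k U_{N_k}R_{n_k}P_{n_k}$ and $S=\sum_k U_{n_k}S_{n_k}P_{N_k}$, the composition $STR$ consists not only of the diagonal terms $U_{n_k}S_{n_k}(P_{N_k}TU_{N_k})R_{n_k}P_{n_k}$ but also of all cross terms $U_{n_j}S_{n_j}(P_{N_j}TU_{N_k})R_{n_k}P_{n_k}$ with $j\neq k$, and your claim that these are ``killed because $S$ only reads the block $N_k$ from the $k$-th input coordinate'' is false: a general operator $T$ on $X^{(r)}$ maps the summand $SL^\infty_{N_k}$ into every other summand, and $S$ does read the block $N_j$ (into output coordinate $j$), so $P_{N_j}TU_{N_k}$ survives. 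Controlling these off-diagonal blocks is precisely the diagonalization step of Bourgain's localization method, which your proposal omits entirely: for $r<\infty$ one chooses the blocks inductively by a gliding-hump argument so that the cross terms become small, while for $r=\infty$ the gliding hump is unavailable and one needs the property that projections almost annihilate finite dimensional subspaces --- this is exactly what Corollary~\ref{cor:projections-that-annihilate} and Remark~\ref{rem:property-pafds} provide, and your proposal never invokes them. The paper does not redo this machinery by hand; it checks the two hypotheses (the local factorization Theorem~\ref{thm:local-primary} and Remark~\ref{rem:property-pafds}) and applies the abstract localization result \cite[Proposition~5.4]{lechner:2016-factor-mixed} to obtain the diagram~\eqref{eq:thm:primary}, then deduces primariness from $X^{(r)}\cong\bigl(\sum_m X^{(r)}\bigr)_r$ via Pe{\l}czy{\'n}ski's decomposition, and the statement for $SL^\infty$ from Lemma~\ref{lem:primary}. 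Your pigeonhole over the alternative $T$ versus $\Id-T$ and the identification of $\bigl(\sum_k SL^\infty_{n_k}\bigr)_r$ with $X^{(r)}$ are fine as far as they go, but without the diagonalization the identity $STR=\Id$ simply does not hold.

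A second problem is your treatment of the norm bound: the proposed ``averaging over many blocks'' to trade the local constant $2+\eta'$ down to $1+\eta$ is not an argument and cannot work as described. Indeed, for $T=\tfrac12\Id_{X^{(r)}}$ both $T$ and $\Id_{X^{(r)}}-T$ equal $\tfrac12\Id_{X^{(r)}}$, and any commuting diagram $SHR=\Id_{X^{(r)}}$ then forces $\|R\|\,\|S\|\geq 2$; no convexity or multiplicity trick over blocks with the same alternative can circumvent this (it also shows that the constant in the statement should be read as the one inherited from \cite[Proposition~5.4]{lechner:2016-factor-mixed} together with the $2+\eta$ of Theorem~\ref{thm:local-primary}, not as something improvable by averaging). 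In the paper the bound comes directly out of the cited proposition, with no averaging over the alternative, so this part of your plan would have to be abandoned rather than repaired.
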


We remark that the primarity of $SL^\infty$ has previously been established
in~\cite{lechner:2016:factor-SL}, by working directly in the non-separable space $SL^\infty$ using
infinite dimensional methods instead of Bourgain's localization method.  In contrast, here we will
use Bourgain's localization method to show the primarity of $SL^\infty$.

\section{Embeddings, projections and quantitative diagonalization of
  operators}\label{sec:projections}

\noindent
The main point of this section is to provide the technical main result (see Theorem~\ref{thm:quasi-diag})
of this paper.  Theorem~\ref{thm:quasi-diag} permits us to quantitatively almost-diagonalize a given
operator $T$ by a block basis of the Haar system.  Moreover, it is possible to select the block
basis in such way, that if the operator $T$ has large diagonal with respect to the Haar system, then
$T$ has large diagonal with respect to the block basis.

Before we come to the proof of Theorem~\ref{thm:quasi-diag}, we will discuss several results on embeddings
and projections in $SL^\infty$ established in~\cite{lechner:2016:factor-SL}, which will play a vital
role in the proof of Theorem~\ref{thm:quasi-diag}.  Additionally, we replace the techniques involving
qualitative limits of Rademacher functions in~\cite{lechner:2016:factor-SL} with quantitative
combinatorics of dyadic intervals (see Lemma~\ref{lem:comb-1}).

\subsection{Embeddings and projections}\label{subsec:compat}\hfill\\

\noindent
Here, we briefly discuss the conditions~\textrefp[J]{enu:j1}--\textrefp[J]{enu:j4} (which go back to
Jones~\cite{jones:1985}) and their consequences.  First, we will show that the
conditions~\textrefp[J]{enu:j1}--\textrefp[J]{enu:j4} are stable under reiteration (see
Theorem~\ref{thm:projection-iteration}).  Then we will prove that whenever a block basis
$(b_I : I\in\mathcal{D}^n)$ of the Haar system $(h_I : I\in\mathcal{D}^N)$ is selected according
to~\textrefp[J]{enu:j1}--\textrefp[J]{enu:j4}, $(b_I : I\in\mathcal{D}^n)$ spans a complemented copy
of $SL^\infty_n$ (see Theorem~\ref{thm:projection}; the constants for the norms of the isomorphism and the
projection do not depend on $n$).

Let $\mathcal{I}\subset \mathcal{D}$ be a collection of dyadic intervals, and let $\mathcal{N}$ be a
collection of sets.  Let $\mathcal{I}$ index collections $\mathcal{B}_I\subset \mathcal{N}$,
$I\in\mathcal{I}$, and put
\begin{equation}\label{eq:abbreviations}
  \mathcal{B}
  = \bigcup_{I\in \mathcal{I}} \mathcal{B}_I
  \qquad\text{and}\qquad
  \qquad B_I = \bigcup \mathcal{B}_I,
  \quad\text{for all $I\in \mathcal{I}$}.
\end{equation}
We say that the (possibly finite) sequence $(\mathcal{B}_I : I \in \mathcal{I})$ \emph{satisfies
  Jones' compatibility conditions} (see~\cite{jones:1985}) \emph{with constant $\kappa_J\geq 1$}, if
the following conditions~\textrefp[J]{enu:j1}--\textrefp[J]{enu:j4} are satisfied:
\begin{enumerate}[\quad(J1)]
\item\label{enu:j1} The collection $\mathcal{N}$ consists of measurable sets with finite and
  positive measure and is nested, i.e.  whenever $N_0,N_1\in\mathcal{N}$ with
  $N_0\cap N_1\neq \emptyset$, then $N_0\subset N_1$ or $N_1\subset N_0$.  Moreover, for each
  $I\in\mathcal{I}$, the collection $\mathcal{B}_I\subset\mathcal{N}$ is finite.

\item\label{enu:j2} For each $I\in\mathcal{I}$, the collection $\mathcal{B}_I$ is non-empty and
  consists of pairwise disjoint sets.  Furthermore,
  $\mathcal{B}_{I_0} \cap \mathcal{B}_{I_1} = \emptyset$, whenever $I_0,I_1\in\mathcal{I}$ are
  distinct.

\item\label{enu:j3} For all $I_0, I_1\in \mathcal{I}$ holds that
  \begin{equation*}
    B_{I_0}\cap B_{I_1} = \emptyset\ \text{if}\ I_0 \cap I_1 = \emptyset,
    \qquad\text{and}\qquad
    B_{I_0} \subset B_{I_1}\ \text{if}\ I_0 \subset I_1.
  \end{equation*}

\item\label{enu:j4} For all $I_0,I\in \mathcal{I}$ with $I_0\subset I$ and $N\in \mathcal{B}_I$, we
  have
  \begin{equation*}
    \frac{|N\cap B_{I_0}|}{|N|} \geq  \kappa_J^{-1} \frac{|B_{I_0}|}{|B_I|}.
  \end{equation*}
\end{enumerate}

In the following Lemma~\ref{lem:projection-simple}, we record three facts about collections
satisfying~\textrefp[J]{enu:j1}--\textrefp[J]{enu:j4}.  It is a straightforward finite dimensional
adaptation of~\cite[Lemma~3.1]{lechner:2016:factor-SL}.
\begin{lem}\label{lem:projection-simple}
  Let $n\in\mathbb{N}_0$ and let $(\mathcal{B}_I : I\in \mathcal{D}^n)$
  satisfy~\textrefp[J]{enu:j1}--\textrefp[J]{enu:j4}.  Then the following statements are true:
  \begin{enumerate}[(i)]
  \item\label{enu:projection-simple:1} $(B_I : I\in \mathcal{D}^n)$ is a finite sequence of nested
    measurable sets of finite positive measure.
    
  \item\label{enu:projection-simple:2} Let $I, I_0\in \mathcal{D}^n$, then
    \begin{equation*}
      B_{I_0} \subset B_I
      \quad\text{if and only if}\quad
      I_0 \subset I.
    \end{equation*}
    
  \item\label{enu:projection-simple:3} Let $I_0, I\in \mathcal{D}^n$, with $I_0\subset I$.  Then for
    all $N_0\in \mathcal{B}_{I_0}$ there exists a set $N\in \mathcal{B}_I$ such that $N_0\subset N$.
  \end{enumerate}
\end{lem}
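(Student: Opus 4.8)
The plan is to deduce all three assertions directly from Jones' compatibility conditions \textrefp[J]{enu:j1}--\textrefp[J]{enu:j4}, using repeatedly that the index set $\mathcal{D}^n$ is closed under passing to parents and to siblings, so that $\mathcal{B}_{I'}$ is available for any sibling $I'$ of an interval in $\mathcal{D}^n$ of positive generation. For (i), by \textrefp[J]{enu:j1} the collection $\mathcal{B}_I$ is finite and consists of measurable sets of finite positive measure, and by \textrefp[J]{enu:j2} it is non-empty with pairwise disjoint members; hence $B_I=\bigcup\mathcal{B}_I$ is measurable with finite positive measure. Nestedness of the finite family $(B_I:I\in\mathcal{D}^n)$ is immediate from \textrefp[J]{enu:j3}, since any two $I_0,I_1\in\mathcal{D}^n$ are either disjoint, giving $B_{I_0}\cap B_{I_1}=\emptyset$, or satisfy a containment, which \textrefp[J]{enu:j3} transports to the corresponding containment of the sets $B_{I_0}$ and $B_{I_1}$.

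For (ii), the implication ``$I_0\subset I$ implies $B_{I_0}\subset B_I$'' is the second clause of \textrefp[J]{enu:j3}. For the converse I would argue by contradiction: assume $B_{I_0}\subset B_I$ but $I_0\not\subset I$. As dyadic intervals, either $I_0\cap I=\emptyset$ -- in which case \textrefp[J]{enu:j3} gives $B_{I_0}\cap B_I=\emptyset$, forcing $B_{I_0}=\emptyset$ and contradicting (i) -- or $I\subsetneq I_0$. In the remaining case let $I'\in\mathcal{D}^n$ be the sibling of $I$; then $I'\subset I_0$ while $I'\cap I=\emptyset$, so \textrefp[J]{enu:j3} yields both $B_{I'}\subset B_{I_0}\subset B_I$ and $B_{I'}\cap B_I=\emptyset$, whence $B_{I'}=\emptyset$, again contradicting (i).

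For (iii), if $I_0=I$ take $N=N_0$. Otherwise $I_0\subsetneq I$; let $J$ be the parent of $I_0$ and $I_0'$ its sibling, so that $I_0',J\in\mathcal{D}^n$, $I_0'\subset I$, and $B_{I_0'}\cap B_{I_0}=\emptyset$ by \textrefp[J]{enu:j3}. Since $N_0\subset B_{I_0}\subset B_I=\bigcup\mathcal{B}_I$ and $|N_0|>0$, the set $N_0$ meets some $N\in\mathcal{B}_I$; as $N_0$ and $N$ both belong to the nested family $\mathcal{N}$ of \textrefp[J]{enu:j1}, either $N_0\subset N$, and we are done, or $N\subsetneq N_0$. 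To rule out the latter I would invoke \textrefp[J]{enu:j4}: from $N\subsetneq N_0\subset B_{I_0}$ we get $N\cap B_{I_0'}=\emptyset$, whereas \textrefp[J]{enu:j4} applied with $I_0'\subset I$ and $N\in\mathcal{B}_I$ gives $|N\cap B_{I_0'}|\geq\kappa_J^{-1}|N|\,|B_{I_0'}|/|B_I|>0$ by (i). Hence some $N\in\mathcal{B}_I$ satisfies $N_0\subset N$, which is (iii).

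The arguments for (i) and for the forward implication of (ii) are pure bookkeeping, and the converse of (ii) is the same one-line sibling argument. The only step carrying genuine content is the exclusion of the case $N\subsetneq N_0$ in (iii): it requires combining the nestedness of the ambient collection $\mathcal{N}$ with the non-degeneracy estimate \textrefp[J]{enu:j4}, via the sibling $I_0'$, so as to forbid an atom at the finer level $I_0$ from strictly containing an atom of the coarser family $\mathcal{B}_I$. I expect this -- short as it is -- to be the main obstacle, and it is also the point at which the closure of $\mathcal{D}^n$ under siblings is essential.
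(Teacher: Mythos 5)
Your proof is correct. Note that the paper gives no self-contained argument for this lemma at all --- its ``proof'' consists of the instruction to repeat the proof of Lemma~3.1 of \cite{lechner:2016:factor-SL} with $\mathcal{D}$ replaced by $\mathcal{D}^n$ --- so what you have written supplies exactly what that reduction leaves implicit, and it does so correctly: (i) and the forward half of (ii) are immediate from (J1)--(J3); the converse of (ii) and, more substantially, the exclusion of $N\subsetneq N_0$ in (iii) both go through a sibling interval (of $I$, resp.\ of $I_0$), using that (J3) makes the corresponding set $B$ disjoint from the relevant set while (J2) and (J4) force it to occupy positive measure there. Two points you handle are exactly the ones that need care: the application of (J4) to the pair $(I_0',I)$ rather than $(I_0,I)$ (the latter gives no contradiction, since $N\subset B_{I_0}$ satisfies that inequality trivially), and the observation that $\mathcal{D}^n$ is closed under passing to parents and siblings, which is precisely what legitimizes the paper's ``replace $\mathcal{D}$ by $\mathcal{D}^n$ and repeat'' remark, since the sibling and parent intervals you invoke always stay inside the index set $\mathcal{D}^n$.
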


\begin{proof}
  Replacing $\mathcal{D}^n$ with $\mathcal{D}$ in the proof
  of~\cite[Lemma~3.1]{lechner:2016:factor-SL} and repeating it, yields the above result.
\end{proof}

\begin{rem}\label{rem:projection-simple}
  By Lemma~\ref{lem:projection-simple}~\eqref{enu:projection-simple:2}, we can uniquely identify $I$ with
  $B_I$.
\end{rem}

The following Theorem~\ref{thm:projection-iteration} (which is a finite dimensional version
of~\cite[Theorem~3.2]{lechner:2016:factor-SL}) asserts that Jones' compatibility
conditions~\textrefp[J]{enu:j1}--\textrefp[J]{enu:j4} are stable under iteration.
\begin{thm}\label{thm:projection-iteration}
  Let $n,N\in\mathbb{N}_0$, and let $(\mathcal{A}_I : I \in \mathcal{D}^N)$ be a finite sequence of
  collections of sets that satisfies~\textrefp[J]{enu:j1}--\textrefp[J]{enu:j4} with constant
  $\kappa_J\geq 1$.  Put $\mathcal{M} = \Union_{I\in \mathcal{D}^N}\mathcal{A}_I$ and
  $A_I = \bigcup \mathcal{A}_I$, $I\in\mathcal{D}^N$.  Let $\mathcal{N}$ denote the collection of
  nested sets given by
  \begin{equation*}
    \mathcal{N} = \{A_I : I\in \mathcal{D}^N\}.
  \end{equation*}
  For each $J\in \mathcal{D}^n$ let $\mathcal{B}_J \subset \mathcal{N}$ be such that
  $(\mathcal{B}_J : J \in \mathcal{D}^n)$ satisfies~\textrefp[J]{enu:j1}--\textrefp[J]{enu:j4} with
  constant $\kappa_J\geq 1$, where we put $B_J = \bigcup\mathcal{B}_J$.  Finally, for all
  $J\in \mathcal{D}^n$, we define
  \begin{equation*}
    \mathcal{C}_J = \bigcup_{A_I\in \mathcal{B}_J} \mathcal{A}_I
    \quad\text{and}\quad
    C_J = \bigcup \mathcal{C}_J,
  \end{equation*}
  and we note that $C_J = B_J$.  Then $(\mathcal{C}_J : J\in \mathcal{D}^n)$ is a finite sequence of
  collections of sets in $\mathcal{M}$ satisfying~\textrefp[J]{enu:j1}--\textrefp[J]{enu:j4} with
  constant $\kappa_J^2$.
\end{thm}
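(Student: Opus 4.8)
The plan is to verify Jones' conditions~\textrefp[J]{enu:j1}--\textrefp[J]{enu:j4} for the sequence $(\mathcal{C}_J : J\in\mathcal{D}^n)$ one after another, the measure estimate~\textrefp[J]{enu:j4} being the substantive step and the other three essentially bookkeeping. Condition~\textrefp[J]{enu:j1} follows because $\mathcal{M}=\Union_{I\in\mathcal{D}^N}\mathcal{A}_I$ inherits nestedness and the measurability/positive-measure properties from the ambient collection of the $\mathcal{A}$-family via~\textrefp[J]{enu:j1} for $\mathcal{A}$, and each $\mathcal{C}_J$ is a finite union, over the finite set $\mathcal{B}_J$ (finite by~\textrefp[J]{enu:j1} for $\mathcal{B}$), of the finite collections $\mathcal{A}_I$. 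For~\textrefp[J]{enu:j2}: each $\mathcal{C}_J$ is non-empty; if $M_0,M_1\in\mathcal{C}_J$ are distinct with $M_i\in\mathcal{A}_{I_i}$ and $A_{I_i}\in\mathcal{B}_J$, then either $I_0=I_1$ and $M_0,M_1$ are disjoint by~\textrefp[J]{enu:j2} for $\mathcal{A}$, or $I_0\neq I_1$, in which case $A_{I_0}\neq A_{I_1}$ by Lemma~\ref{lem:projection-simple}~\eqref{enu:projection-simple:2} for $\mathcal{A}$, so $A_{I_0}\cap A_{I_1}=\emptyset$ by~\textrefp[J]{enu:j2} for $\mathcal{B}$, and hence $M_0\subset A_{I_0}$ and $M_1\subset A_{I_1}$ are disjoint; the same reasoning gives $\mathcal{C}_{J_0}\cap\mathcal{C}_{J_1}=\emptyset$ for distinct $J_0,J_1$ (an $M$ in both would lie in $\mathcal{A}_{I_0}$ and $\mathcal{A}_{I_1}$, forcing $I_0=I_1$ by~\textrefp[J]{enu:j2} for $\mathcal{A}$ and then $A_{I_0}\in\mathcal{B}_{J_0}\cap\mathcal{B}_{J_1}$, contradicting~\textrefp[J]{enu:j2} for $\mathcal{B}$). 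Condition~\textrefp[J]{enu:j3} for $(\mathcal{C}_J)$ is immediate from $C_J=B_J$ and~\textrefp[J]{enu:j3} for $(\mathcal{B}_J)$.

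It remains to establish~\textrefp[J]{enu:j4}. Fix $J_0,J\in\mathcal{D}^n$ with $J_0\subset J$ and $M\in\mathcal{C}_J$, so $M\in\mathcal{A}_I$ for some $A_I\in\mathcal{B}_J$; in particular $M\subset A_I\subset B_J=C_J$. If $J_0=J$ then $|M\cap C_{J_0}|=|M|$ and the inequality $1\geq\kappa_J^{-2}=\kappa_J^{-2}|C_{J_0}|/|C_J|$ is trivial, so assume $J_0\subsetneq J$; then $\mathcal{B}_{J_0}\cap\mathcal{B}_J=\emptyset$ by~\textrefp[J]{enu:j2} for $\mathcal{B}$. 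The crucial observation is that every $A_{I'}\in\mathcal{B}_{J_0}$ with $A_{I'}\cap A_I\neq\emptyset$ satisfies $A_{I'}\subsetneq A_I$, and hence $I'\subsetneq I$ by Lemma~\ref{lem:projection-simple}~\eqref{enu:projection-simple:2} for $\mathcal{A}$: nestedness of $\mathcal{N}$ leaves only the cases $A_{I'}\subset A_I$ or $A_I\subset A_{I'}$; the case $A_I\subsetneq A_{I'}$ is excluded because Lemma~\ref{lem:projection-simple}~\eqref{enu:projection-simple:3} for $(\mathcal{B}_J)$ yields $A_{I''}\in\mathcal{B}_J$ with $A_{I'}\subset A_{I''}$, so that $A_I\subsetneq A_{I''}$ would violate the pairwise disjointness of $\mathcal{B}_J$; and the case $A_{I'}=A_I$ is excluded since $\mathcal{B}_{J_0}\cap\mathcal{B}_J=\emptyset$.

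Granting this, $M\subset A_I$ forces the disjoint decompositions $M\cap B_{J_0}=\bigcup\{M\cap A_{I'}:A_{I'}\in\mathcal{B}_{J_0},\ A_{I'}\subset A_I\}$ and $A_I\cap B_{J_0}=\bigcup\{A_{I'}:A_{I'}\in\mathcal{B}_{J_0},\ A_{I'}\subset A_I\}$ (disjoint by~\textrefp[J]{enu:j2} for $\mathcal{B}$). Applying~\textrefp[J]{enu:j4} for $\mathcal{A}$ to the pair $I'\subsetneq I$ and the set $M\in\mathcal{A}_I$ in each summand, then summing, then applying~\textrefp[J]{enu:j4} for $\mathcal{B}$ to the pair $J_0\subset J$ and the set $A_I\in\mathcal{B}_J$, gives
\begin{equation*}
  |M\cap B_{J_0}|
  = \sum_{\substack{A_{I'}\in\mathcal{B}_{J_0}\\ A_{I'}\subset A_I}}|M\cap A_{I'}|
  \geq \frac{|M|}{\kappa_J\,|A_I|}\sum_{\substack{A_{I'}\in\mathcal{B}_{J_0}\\ A_{I'}\subset A_I}}|A_{I'}|
  = \frac{|M|\,|A_I\cap B_{J_0}|}{\kappa_J\,|A_I|}
  \geq \frac{|M|\,|B_{J_0}|}{\kappa_J^2\,|B_J|}.
\end{equation*}
Dividing by $|M|$ and using $B_{J_0}=C_{J_0}$, $B_J=C_J$ yields $|M\cap C_{J_0}|/|M|\geq\kappa_J^{-2}|C_{J_0}|/|C_J|$, which is~\textrefp[J]{enu:j4} for $(\mathcal{C}_J)$ with constant $\kappa_J^2$. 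The main obstacle is the nesting observation in the previous paragraph: once the sets of $\mathcal{B}_{J_0}$ meeting $A_I$ are known to be exactly those contained in the single set $A_I$, the two invocations of~\textrefp[J]{enu:j4} --- one at the scale $M\subset A_I$ for the $\mathcal{A}$-family, one at the scale $A_I\subset B_J$ for the $\mathcal{B}$-family --- chain together and the constants multiply.
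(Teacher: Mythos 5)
Your proof is correct: the checks of (J1)--(J3) are the routine bookkeeping, the key nesting observation that the members of $\mathcal{B}_{J_0}$ meeting $A_I$ are exactly those contained in $A_I$ is properly justified via Lemma~\ref{lem:projection-simple} and the disjointness in (J2), and chaining the two (J4) estimates at the scales $M\subset A_I$ and $A_I\subset B_J$ yields the constant $\kappa_J^2$. The paper's own proof merely defers to the infinite-dimensional version in~\cite[Theorem~3.2]{lechner:2016:factor-SL}, and your self-contained verification is essentially that same argument, so there is nothing to add.
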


\begin{proof}
  The proof of Theorem~\ref{thm:projection-iteration} follows immediately by replacing $\mathcal{D}^n$ with
  $\mathcal{D}$ in the proof of~\cite[Theorem~3.2]{lechner:2016:factor-SL}, and repeating the
  argument.
\end{proof}

\noindent
Here we establish that if $(\mathcal{B}_I : I\in \mathcal{D}^n)$ with
$\mathcal{B}_I\subset\mathcal{D}^N$, $I\in\mathcal{D}^n$, satisfies Jones' compatibility
conditions~\textrefp[J]{enu:j1}--\textrefp[J]{enu:j4}, then the block basis
$(b_I : I\in \mathcal{D}^n)$ given by
\begin{equation*}
  b_I = \sum_{K\in \mathcal{B}_I} h_K,
  \qquad I\in \mathcal{D}
\end{equation*}
spans a complemented copy of $SL^\infty_n$ (the constants for the norms of the isomorphism and the
projection do not depend on $n$).  Theorem~\ref{thm:projection} is a straightforward finite dimensional
adaptation of~\cite[Theorem~3.3]{lechner:2016:factor-SL}
\begin{thm}\label{thm:projection}
  Let $n,N\in\mathbb{N}_0$ and $\mathcal{B}_I\subset \mathcal{D}^N$, $I\in \mathcal{D}^n$.  Assume
  that the finite sequence of collections of dyadic intervals $(\mathcal{B}_I : I\in \mathcal{D}^n)$
  satisfies Jones' compatibility conditions~\textrefp[J]{enu:j1}--\textrefp[J]{enu:j4} with constant
  $\kappa_J\geq 1$.  Let $(b_I : I\in \mathcal{D}^n)$ denote the block basis of the Haar system
  $(h_I : I\in \mathcal{D}^N)$ given by
  \begin{equation}\label{eq:thm:projection:block-basis}
    b_I = \sum_{K\in \mathcal{B}_I} h_K,
    \qquad I\in \mathcal{D}^n.
  \end{equation}
  Then the operators $B : SL^\infty_n\to SL^\infty_N$ and $Q : SL^\infty_N\to SL^\infty_n$ given by
  \begin{equation}\label{eq:thm:projection:operators}
    Bf = \sum_{I\in \mathcal{D}^n} \frac{\langle f, h_I\rangle}{\|h_I\|_2^2} b_I
    \qquad\text{and}\qquad
    Qg = \sum_{I\in \mathcal{D}^n} \frac{\langle g, b_I\rangle}{\|b_I\|_2^2} h_I,
  \end{equation}
  satisfy the estimates
  \begin{equation}\label{eq:thm:projection:estimates}
    \|B f \|_{SL^\infty_N} \leq \|f\|_{SL^\infty_n}
    \qquad\text{and}\qquad
    \|Q g \|_{SL^\infty_n} \leq \kappa_J^{1/2} \|g\|_{SL^\infty_N},
  \end{equation}
  for all $f\in SL^\infty_n$, $g\in SL^\infty_N$.  Moreover, the diagram
  \begin{equation}\label{eq:thm:projection:diagram}
    \vcxymatrix{SL^\infty_n \ar[rr]^{\Id_{SL^\infty_n}} \ar[rd]_{B} & & SL^\infty_n\\
      &  SL^\infty_N  \ar[ru]_{Q} &
    }
  \end{equation}
  is commutative.  Consequently, the range of $B$ is complemented by the projection $BQ$ with
  $\|BQ\|\leq\kappa_J^{1/2}$, and $B$ is an isomorphism onto its range with
  $\|B\| \|B^{-1}\|\leq \kappa_J^{1/2}$.
\end{thm}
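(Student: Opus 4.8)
The plan is to verify the two norm estimates in~\eqref{eq:thm:projection:estimates} directly from the definitions of $B$ and $Q$, then check commutativity of~\eqref{eq:thm:projection:diagram} by a short computation, and finally read off the statements about $BQ$ and $B^{-1}$ as formal consequences. The key structural input is that the block functions $b_I = \sum_{K\in\mathcal{B}_I} h_K$ are, by~\textrefp[J]{enu:j2}, sums of Haar functions over pairwise disjoint intervals, so $b_I$ is itself a $\{-1,0,1\}$-valued function supported on $B_I$ with $b_I^2 = \chi_{B_I}$; moreover by Lemma~\ref{lem:projection-simple} the sets $B_I$, $I\in\mathcal{D}^n$, are nested exactly like the dyadic intervals $I$. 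This lets us transfer square-function computations between the Haar system on $[0,1)$ and the block system.

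For the bound on $B$: given $f = \sum_I a_I h_I \in SL^\infty_n$ (so $a_I = \langle f,h_I\rangle/\|h_I\|_2^2$), we have $Bf = \sum_I a_I b_I$, and I would compute the square function $\bigl(\sum_I a_I^2 b_I^2\bigr)^{1/2} = \bigl(\sum_I a_I^2 \chi_{B_I}\bigr)^{1/2}$. At a point $x$, the set of indices $I$ with $x\in B_I$ forms a chain under inclusion (nestedness of the $B_I$), hence corresponds under Remark~\ref{rem:projection-simple} to a chain of dyadic intervals, i.e.\ to the intervals containing some fixed point $t\in[0,1)$; therefore the value of the square function of $Bf$ at $x$ equals the value of the square function of $f$ at $t$, which is $\le \|f\|_{SL^\infty_n}$. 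Taking the sup over $x$ gives $\|Bf\|_{SL^\infty_N}\le\|f\|_{SL^\infty_n}$. For the bound on $Q$: writing $c_I = \langle g,b_I\rangle/\|b_I\|_2^2$ so that $Qg = \sum_I c_I h_I$, I would estimate $|c_I|$ by expanding $\langle g,b_I\rangle = \sum_{K\in\mathcal{B}_I}\langle g,h_K\rangle$ and using~\textrefp[J]{enu:j4} to compare $|B_{I_0}|/|B_I|$ with the local densities $|N\cap B_{I_0}|/|N|$; this is the step where the constant $\kappa_J$ enters, and it is essentially the computation of~\cite[Theorem~3.3]{lechner:2016:factor-SL} carried out with $\mathcal{D}^n,\mathcal{D}^N$ in place of $\mathcal{D}$. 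Pointwise on $[0,1)$ one again reduces the square function of $Qg$ to the square function of $g$ along an appropriate chain of sets $B_I$, picking up the factor $\kappa_J^{1/2}$ from the density estimate~\textrefp[J]{enu:j4}.

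Commutativity of~\eqref{eq:thm:projection:diagram}, i.e.\ $QB = \Id_{SL^\infty_n}$, follows by computing $QBf = \sum_I \frac{\langle Bf, b_I\rangle}{\|b_I\|_2^2} h_I$ and noting that $\langle b_J, b_I\rangle = 0$ for $J\neq I$ (the $b_I$ are supported on pairwise disjoint collections of Haar functions, hence biorthogonal after normalization) while $\langle b_I,b_I\rangle = |B_I| = \|b_I\|_2^2$; thus the coefficient of $h_I$ in $QBf$ is exactly $a_I = \langle f,h_I\rangle/\|h_I\|_2^2$, so $QBf = f$. Given $QB = \Id$, the operator $P := BQ$ on $SL^\infty_N$ satisfies $P^2 = B(QB)Q = BQ = P$, so it is a projection onto $\operatorname{range}(B)$, and $\|P\|\le\|B\|\|Q\|\le\kappa_J^{1/2}$ from~\eqref{eq:thm:projection:estimates}; likewise $B$ is injective with left inverse $Q$, hence an isomorphism onto its range, and $\|B\|\,\|B^{-1}\| = \|B\|\,\|Q|_{\operatorname{range}(B)}\| \le 1\cdot\kappa_J^{1/2}$. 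The main obstacle is the estimate for $\|Q\|$: one has to organize the expansion of $\langle g, b_I\rangle$ over the nested structure so that~\textrefp[J]{enu:j4} can be applied cleanly at each level, and then bound the resulting pointwise square function — this is exactly where Lemma~\ref{lem:projection-simple}~\eqref{enu:projection-simple:3} (each $N_0\in\mathcal{B}_{I_0}$ sits inside some $N\in\mathcal{B}_I$) is needed to line up the chains. Everything else is bookkeeping identical to the cited infinite-dimensional proof.
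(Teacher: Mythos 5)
Your proposal is correct and takes essentially the same route as the paper: the paper settles the theorem in two lines by noting that $B$ and $Q$ are truncations of the infinite dimensional operators of \cite[Theorem~3.3]{lechner:2016:factor-SL} and invoking $1$-unconditionality of the Haar system, while you verify the easy parts directly ($\|Bf\|\le\|f\|$ via $b_I^2=\chi_{B_I}$ and nestedness, $QB=\Id_{SL^\infty_n}$ via orthogonality of distinct Haar functions together with~\textrefp[J]{enu:j2}, and the formal consequences for $BQ$ and $B^{-1}$) and defer the only substantive point, the $\kappa_J^{1/2}$ bound on $Q$ obtained from~\textrefp[J]{enu:j4}, to the same cited computation carried out over $\mathcal{D}^n,\mathcal{D}^N$. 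One cosmetic remark: since the chain $\{I : x\in B_I\}$ need not extend down to the finest level, the square function of $Bf$ at $x$ is in general only dominated by, not equal to, the square function of $f$ at the corresponding point $t$ — which is all your estimate requires.
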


\begin{proof}
  Clearly, the finite dimensional operators $B$ and $Q$ in the above theorem are truncated versions
  of the corresponding infinite dimensional operators in~\cite[Theorem~3.3]{lechner:2016:factor-SL}.
  Hence, the result follows by $1$-unconditionality of the Haar system in $SL^\infty$.
\end{proof}

\begin{rem}\label{rem:projection}
  Let $n,N\in\mathbb{N}_0$ and $\mathcal{B}_I\subset\mathcal{D}^N$, $I\in\mathcal{D}^n$ be such that
  $(\mathcal{B}_I : I \in \mathcal{D}^n)$ satisfies Jones' compatibility
  conditions~\textrefp[J]{enu:j1}--\textrefp[J]{enu:j4} with constant $\kappa_J\geq 1$.  Recall that
  in~\eqref{eq:abbreviations} we defined $\mathcal{B} = \bigcup_{I\in \mathcal{D}^n} \mathcal{B}_I$.
  Now, given a finite sequence of signs
  $\varepsilon = (\varepsilon_K\in \{\pm 1\} : K\in \mathcal{B})$ we put
  \begin{equation}\label{eq:block-basis}
    b_I^{(\varepsilon)} = \sum_{K\in \mathcal{B}_I} \varepsilon_K h_K,
    \qquad I\in\mathcal{D}^n.
  \end{equation}
  We call $(b_I^{(\varepsilon)} : I\in\mathcal{D}^n)$ the \emph{block basis generated by}
  $(\mathcal{B}_I : I\in \mathcal{D}^n)$ and
  $\varepsilon = (\varepsilon_K\in \{\pm 1\} : K\in \mathcal{B})$.
 
  The block basis $(b_I^{(\varepsilon)} : I\in\mathcal{D}^n)$ gives rise to the operators
  $B^{(\varepsilon)} : SL^\infty_n\to SL^\infty_N$ and
  $Q^{(\varepsilon)} : SL^\infty_N\to SL^\infty_n$:
  \begin{equation}\label{rem:projection:operators:signs}
    B^{(\varepsilon)} f
    = \sum_{I\in \mathcal{D}^n} \frac{\langle f, h_I\rangle}{\|h_I\|_2^2} b_I^{(\varepsilon)}
    \qquad\text{and}\qquad
    Q^{(\varepsilon)} g
    = \sum_{I\in \mathcal{D}^n}
    \frac{\langle g, b_I^{(\varepsilon)}\rangle}{\|b_I^{(\varepsilon)}\|_2^2} h_I,
  \end{equation}
  $f\in SL^\infty_n$, $g\in SL^\infty_N$.  Before we proceed, recall the definitions of the
  operators $B,Q$ (see Theorem~\ref{thm:projection}).  By the $1$-unconditionality of the Haar system in
  $SL^\infty$ and
  \begin{equation*}
    Q^{(\varepsilon)} g
    = Q g^{(\varepsilon)},
    \qquad \text{for all}\ g\in SL^\infty_N\ \text{and}\ 
    g^{(\varepsilon)}
    = \sum_{K\in \mathcal{B}} \varepsilon_K \frac{\langle g, h_K\rangle}{\|h_K\|_2^2} h_K,
  \end{equation*}
  we obtain
  \begin{equation}\label{rem:projection:estimates:signs}
    \|B^{(\varepsilon)} \| \leq \|B\|
    \qquad\text{and}\qquad
    \|Q^{(\varepsilon)} \| \leq \|Q \|.
  \end{equation}
  Moreover, we have the identity
  \begin{equation}\label{rem:projection:identity:signs}
    Q^{(\varepsilon)} B^{(\varepsilon)} = \Id_{SL^\infty_n}.
  \end{equation}
  Consequently, the range of $B^{(\varepsilon)}$ is complemented by the projection
  $B^{(\varepsilon)}Q^{(\varepsilon)}$ with
  $\|B^{(\varepsilon)}Q^{(\varepsilon)}\|\leq\kappa_J^{1/2}$, and $B^{(\varepsilon)}$ is an
  isomorphism onto its range with
  $\|B^{(\varepsilon)}\| \|(B^{(\varepsilon)})^{-1}\|\leq \kappa_J^{1/2}$.
\end{rem}

\subsection{A combinatorial lemma}\label{subsec:combinatorial}\hfill

\noindent
The following Lemma~\ref{lem:comb-1} will later be used in the proofs of the local results
Theorem~\ref{thm:quasi-diag}, and Corollary~\ref{cor:projections-that-annihilate}.  We remark that in the infinite
dimensional setting (see~\cite{lechner:2016:factor-SL}), we used that the Rademacher functions tend
to $0$ in a specific way which is related to weak and weak$^*$ convergence (see~\cite[Lemma~4.1,
Lemma~4.2]{lechner:2016:factor-SL}).  Lemma~\ref{lem:comb-1} can be viewed as a quantitative substitute
for those arguments.

There are versions of Lemma~\ref{lem:comb-1} involving spaces other than $H^1$ and $SL^\infty$,
e.g. bi-parameter $H^1$ and bi-parameter BMO (see~\cite[Lemma~4.1]{lechner:mueller:2014}), or mixed
norm Hardy spaces (see~\cite[Lemma~4.1]{lechner:2016-factor-mixed}).  Their origin is the version
for one parameter $H^1$ and one parameter BMO (see~\cite{mueller:1988}; see
also~\cite[Lemma~5.2.4]{mueller:2005}).
\begin{lem}\label{lem:comb-1}
  Let $i \in \mathbb{N}$, $K_0\in \mathcal{D}$, and let $f_j \in SL^\infty$ and
  $g_j \in (SL^\infty)^*$, $1\leq j\leq i$, be such that
  \begin{equation}\label{eq:function-hypothesis}
    \sum_{j=1}^{i} \|f_j\|_{SL^\infty} \leq 1
    \quad\text{and}\quad
    \sum_{j=1}^{i} \|g_j\|_{(SL^\infty)^*} \leq |K_0|.
  \end{equation}
  The local frequency weight $\omega : \mathcal{D}\to [0,\infty)$ is given by
  \begin{equation}\label{eq:local_frequency_weight}
    \omega(K)
    = \sum_{j=1}^{i} |\langle f_j, h_{K} \rangle| + |\langle h_{K}, g_j \rangle|,
    \qquad K\in\mathcal{D}.
  \end{equation}
  Given $\tau > 0$, and $r \in \mathbb{N}_0$ with $2^{-r}\leq |K_0|$, we define the collection of
  dyadic intervals
  \begin{equation*}
    \mathcal{G}(K_0)
    = \big\{ K\in\mathcal{D} :
    K \subset K_0,\ |K|\leq 2^{-r},\
    \omega(K)\leq \tau |K|
    \big\}.
  \end{equation*}
  Moreover, we put
  \begin{equation*}
    \mathcal{G}_k(K_0)
    = \mathcal{G}(K_0)\cap \mathcal{D}_k,
    \qquad k\in\mathbb{N}.
  \end{equation*}
  Then for each $\rho > 0$, there exists an integer $k$ with
  \begin{equation}\label{lem:comb-1:int}
    r \leq k \leq \bigg\lfloor \frac{4}{\rho^2\tau^2} \bigg\rfloor + r
  \end{equation}
  such that
  \begin{equation}\label{lem:comb-1:measure}
    \big| \bigcup \mathcal{G}_k(K_0) \big|
    \geq (1-\rho) |K_0|.
  \end{equation}
\end{lem}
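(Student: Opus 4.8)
The plan is to pit two estimates against each other over a window of $M+1$ consecutive dyadic generations below $K_0$, where $M=\bigl\lfloor 4/(\rho^2\tau^2)\bigr\rfloor$. For integers $k$ with $r\le k\le r+M$, write $\mathcal D_k(K_0)=\{K\in\mathcal D_k:K\subset K_0\}$; since $2^{-k}\le 2^{-r}\le|K_0|$, the intervals of $\mathcal D_k(K_0)$ partition $K_0$, so every $K\in\mathcal D_k(K_0)$ either lies in $\mathcal G_k(K_0)$ or satisfies $\omega(K)>\tau|K|$. Putting $E_k=\bigcup\{K\in\mathcal D_k(K_0):\omega(K)>\tau|K|\}$ we get $\bigcup\mathcal G_k(K_0)=K_0\setminus E_k$, so \eqref{lem:comb-1:measure} holds for a given $k$ if and only if $|E_k|\le\rho|K_0|$. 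I will show that if this fails for \emph{every} $k$ in the window, then the total mass $\sum_{k=r}^{r+M}\sum_{K\in\mathcal D_k(K_0)}\omega(K)$ is forced to be too large.

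First I would establish the upper bound
\begin{equation*}
  \sum_{k=r}^{r+M}\sum_{K\in\mathcal D_k(K_0)}\omega(K)\le 2\sqrt{M+1}\,|K_0|.
\end{equation*}
By \eqref{eq:local_frequency_weight} this splits into an $f$-part and a $g$-part. For the $f$-part, fix $j$ and write $a_K=\langle f_j,h_K\rangle/|K|$ for the Haar coefficient, so that $\sum_{K\in\mathcal D_k(K_0)}|\langle f_j,h_K\rangle|=\int_{K_0}|a_{K_k(x)}|\dif x$, where $K_k(x)\in\mathcal D_k(K_0)$ denotes the level-$k$ dyadic interval containing $x$. Summing over $k$, applying the Cauchy--Schwarz inequality in the $M+1$ terms, and then the pointwise bound $\bigl(\sum_{K\ni x}a_K^2\bigr)^{1/2}\le\|f_j\|_{SL^\infty}$, gives $\sum_{k=r}^{r+M}\sum_{K\in\mathcal D_k(K_0)}|\langle f_j,h_K\rangle|\le\sqrt{M+1}\,|K_0|\,\|f_j\|_{SL^\infty}$. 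For the $g$-part, fix $j$, set $\varepsilon_K=\sign\langle h_K,g_j\rangle$, and test $g_j$ against $u=\sum\varepsilon_K h_K$, the sum running over all $K\in\mathcal D_k(K_0)$ with $r\le k\le r+M$; since each family $\mathcal D_k(K_0)$ covers $K_0$, the square function of $u$ equals $\sqrt{M+1}\,\chi_{K_0}$, so $\|u\|_{SL^\infty}=\sqrt{M+1}$ and hence $\sum_{k=r}^{r+M}\sum_{K\in\mathcal D_k(K_0)}|\langle h_K,g_j\rangle|=\langle u,g_j\rangle\le\sqrt{M+1}\,\|g_j\|_{(SL^\infty)^*}$. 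Summing both parts over $1\le j\le i$ and invoking \eqref{eq:function-hypothesis} yields the displayed bound.

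Then I would argue by contradiction. Suppose \eqref{lem:comb-1:measure} fails for every integer $k$ with $r\le k\le r+M$; then $|E_k|>\rho|K_0|$ for each such $k$. Since the intervals $K\in\mathcal D_k(K_0)$ with $\omega(K)>\tau|K|$ are pairwise disjoint with union $E_k$,
\begin{equation*}
  \sum_{K\in\mathcal D_k(K_0)}\omega(K)\ \ge\ \sum_{\substack{K\in\mathcal D_k(K_0)\\ \omega(K)>\tau|K|}}\omega(K)\ >\ \tau\,|E_k|\ >\ \tau\rho\,|K_0|,
\end{equation*}
and summing over the $M+1$ values of $k$ gives $\sum_{k=r}^{r+M}\sum_{K\in\mathcal D_k(K_0)}\omega(K)>(M+1)\tau\rho\,|K_0|$. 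Combined with the upper bound this forces $(M+1)\tau\rho<2\sqrt{M+1}$, i.e.\ $M+1<4/(\rho^2\tau^2)$, which contradicts $M=\bigl\lfloor 4/(\rho^2\tau^2)\bigr\rfloor$ (indeed $M+1>4/(\rho^2\tau^2)$). Hence some integer $k$ with $r\le k\le r+M$ satisfies \eqref{lem:comb-1:measure}, which is exactly the range in \eqref{lem:comb-1:int}.

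The one point needing care is extracting the \emph{square root} $\sqrt{M+1}$ — rather than $M+1$ — in the bound on the windowed $\omega$-mass: this is precisely what produces the quadratic dependence $4/(\rho^2\tau^2)$ in \eqref{lem:comb-1:int}. It relies on the square-function structure of $SL^\infty$ and $(SL^\infty)^*$ (Cauchy--Schwarz across the $M+1$ levels for the coefficients of $f_j$, and the $\sqrt{M+1}$-normalization of the sign-test vector $u$ for $g_j$) rather than a term-by-term triangle inequality, which would only give a bound linear in $M$ and hence the wrong control on $k$. The remainder is routine bookkeeping with dyadic partitions.
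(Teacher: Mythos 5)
Your proposal is correct and follows essentially the same route as the paper: assume failure at every level of the window, bound the total windowed $\omega$-mass from below by $(M+1)\rho\tau|K_0|$ and from above by $2\sqrt{M+1}\,|K_0|$ using the square-function structure (your pointwise Cauchy--Schwarz for the $f_j$'s is just a direct proof of the $H^1$-norm bound on the signed Haar sum that the paper invokes via the duality inequality, and your sign-test vector $u$ with $\|u\|_{SL^\infty}=\sqrt{M+1}$ is exactly the paper's $SL^\infty$-estimate), then derive the contradiction with the floor in \eqref{lem:comb-1:int}.
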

See~Figure~\ref{fig:combinatorial} for a depiction of the collections $\mathcal{G}_k(K_0)$,
$r \leq k \leq \bigg\lfloor \frac{4}{\rho^2\tau^2} \bigg\rfloor + r$.
\begin{figure}[bt]
  \begin{center}
    \includegraphics[scale=0.25]{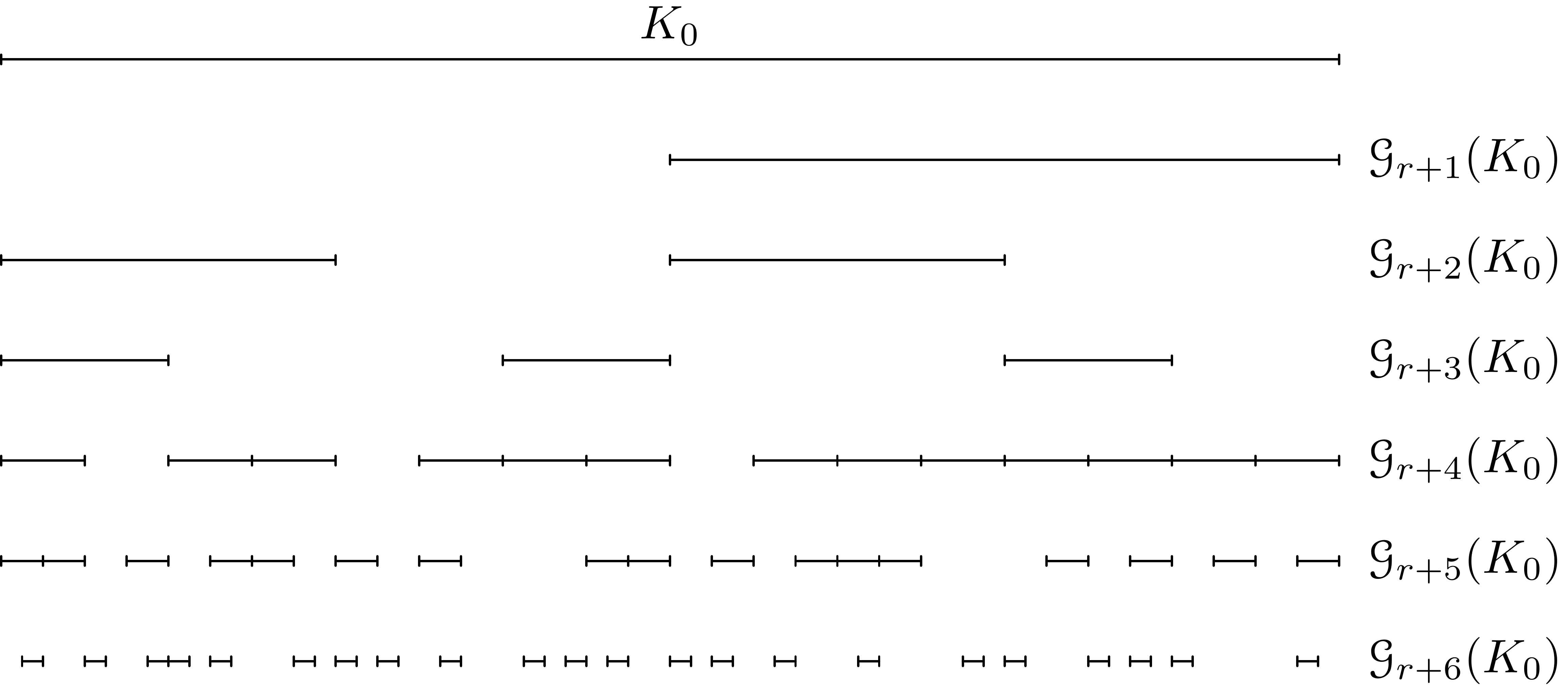}
  \end{center}
  \caption{The interval $K_0$ at the top is partially covered by each of the collections
    $\mathcal{G}_k$, $r+1 \leq k \leq r+6$.  In this case, $\bigcup \mathcal{G}_{r+4}$ is the
    largest in measure of all of those sets.}
  \label{fig:combinatorial}
\end{figure}

\begin{proof}
  The proof of Lemma~\ref{lem:comb-1} is obtained by essentially repeating the argument given
  for~\cite[Lemma~4]{mueller:1988}.  Define
  \begin{equation*}
    \mathcal{H}_k(K_0) = \{ K\in\mathcal{D}_k : K\subset K_0,\ K\notin \mathcal{G}_k(K_0)\},
    \qquad k\in \mathbb{N}_0,
  \end{equation*}
  and put
  \begin{equation*}
    A = \bigg\lfloor \frac{4}{\rho^2\tau^2} \bigg\rfloor + r.
  \end{equation*}
  Assume that the conclusion of Lemma~\ref{lem:comb-1} is not true, i.e.  assume that
  \begin{equation*}
    \big| \bigcup \mathcal{H}_k(K_0) \big|
    > \rho |K_0|,
    \qquad r\leq k \leq A.
  \end{equation*}
  On the one hand, summing these above estimates yields
  \begin{equation}\label{eq:lem:comb-1:1}
    \sum_{k=r}^A \big| \bigcup \mathcal{H}_k(K_0) \big|
    \geq (A-r+1)\, \rho |K_0|.
  \end{equation}
  On the other hand, observe that by definition of $\mathcal{H}_k(K_0)$ and $\mathcal{G}(K_0)$, we
  have
  \begin{equation}\label{eq:lem:comb-1:2}
    \tau \sum_{k=r}^A \big|\bigcup \mathcal{H}_k(K_0)\big|
    \leq \sum_{j=1}^{i} \sum_{k=r}^A \sum_{K \in \mathcal{H}_k}
    |\langle f_j, h_K \rangle| + |\langle h_K, g_j\rangle|.
  \end{equation}
  Now, we rewrite the right hand side of \eqref{eq:lem:comb-1:2} in the following way:
  \begin{equation}\label{eq:lem:comb-1:3}
    \sum_{j=1}^{i} \Big|\Big\langle
    f_j, \sum_{k=r}^A \sum_{K\in\mathcal{H}_k(K_0)} \pm h_K
    \Big\rangle\Big|
    + \Big|\Big\langle
    \sum_{k=r}^A \sum_{K\in\mathcal{H}_k(K_0)} \pm h_K, g_j
    \Big\rangle\Big|.
  \end{equation}
  Note the estimates:
  \begin{subequations}\label{eq:lem:comb-1:4}
    \begin{align}
      \Big\|
      \sum_{k=r}^A \sum_{K\in\mathcal{H}_k(K_0)} \pm h_K
      \Big\|_{H^1}
      & \leq \sqrt{A-r+1}|K_0|,\\
      \Big\|
      \sum_{k=r}^A \sum_{K\in\mathcal{H}_k(K_0)} \pm h_K
      \Big\|_{SL^\infty}
      & \leq \sqrt{A-r+1}.
    \end{align}
  \end{subequations}
  Combining~\eqref{eq:lem:comb-1:2} and~\eqref{eq:lem:comb-1:3} with~\eqref{eq:lem:comb-1:4}, and
  using~\eqref{eq:function-hypothesis} yields
  \begin{align*}
    \tau \sum_{k=r}^A \big|\bigcup \mathcal{H}_k(K_0)\big|
    & \leq \sum_{j=1}^{i} \|f_j\|_{SL^\infty} \Big\|
      \sum_{k=r}^A \sum_{K\in\mathcal{H}_k(K_0)} \pm h_K
      \Big\|_{H^1}\\
    & \qquad + \sum_{j=1}^{i} \|g_j\|_{(SL^\infty)^*}
      \Big\|
      \sum_{k=r}^A \sum_{K\in\mathcal{H}_k(K_0)} \pm h_K
      \Big\|_{SL^\infty}\\
    & \leq 2\sqrt{A-r+1}|K_0|
  \end{align*}
  By~\eqref{eq:lem:comb-1:1} and the above estimate we obtain
  \begin{equation*}
    A \leq \frac{4}{\rho^2\tau^2} + r - 1,
  \end{equation*}
  which contradicts the definition of $A$.
\end{proof}

\subsection{Quantitative diagonalization of operators on
  \bm{$SL^\infty_N$}}\label{subsec:diagonalization}\hfill

\noindent
Here, we will show that any given operator $T$ acting on $SL^\infty_N$ which has large diagonal with
respect to the Haar system, can be almost-diagonalized by a block basis of the Haar system
$(b_I^{(\varepsilon)} : I\in \mathcal{D}^n)$ (see \eqref{eq:block-basis} for the definition of
$b_I^{(\varepsilon)}$) in the space $SL^\infty_N$, that spans a complemented copy of $SL^\infty_n$
(the constants for the norms of the isomorphism and the projection do not depend on $n$; see
Theorem~\ref{thm:quasi-diag}), where the dimensions $N$ and $n$ are quantitatively linked.  This will be
achieved by constructing the block basis $(b_I^{(\varepsilon)} : I\in \mathcal{D}^n)$ so that Jones'
compatibility conditions~\textrefp[J]{enu:j1}--\textrefp[J]{enu:j4} are satisfied.  In order to keep
the diagonal of the operator (with respect to the block new basis
$(b_I^{(\varepsilon)} : I\in \mathcal{D})$) large, we will choose signs appropriately; this
technique was introduced by Andrew in~\cite{andrew:1979}.  The signs in~\cite{andrew:1979} are
selected semi-probabilistically, where in contrast our argument is entirely probabilistic.

From here on, we will regularly identify a dyadic interval $I \in \mathcal{D}$ with the natural
number $\mathcal{O}(I)$ given by
\begin{equation}\label{eq:natural-order}
  \mathcal{O}(I) = 2^n - 1 + k,
  \qquad\text{if $I=[(k-1)2^{-n},k2^{-n})$}.
\end{equation}
Specifically, for $\mathcal{O}(I) = i$ we identify
\begin{equation*}
  \mathcal{B}_I = \mathcal{B}_i
  \qquad\text{and}\qquad
  b_I^{(\varepsilon)} = b_i^{(\varepsilon)}.
\end{equation*}

\begin{thm}\label{thm:quasi-diag}
  Let $n\in\mathbb{N}_0$, $\Gamma,\eta > 0$ and $\delta \geq 0$.  Then there exists an integer
  $N = N(n,\Gamma,\eta)$, such that for any operator $T : SL^\infty_N\to SL^\infty_N$ with
  $\|T\|\leq \Gamma$ and
  \begin{equation*}
    |\langle T h_K, h_K \rangle| \geq \delta |K|,
    \qquad K\in \mathcal{D}^N,
  \end{equation*}
  there exists a finite sequence of collections $(\mathcal{B}_I : I\in \mathcal{D}^n)$ and a finite
  sequence of signs $\varepsilon = (\varepsilon_K \in \{\pm 1\} : K\in \mathcal{B})$, where
  $\mathcal{B} = \bigcup_{I\in\mathcal{D}^n}\mathcal{B}_I$, which generate the block basis of the
  Haar system $(b_I^{(\varepsilon)} : I\in \mathcal{D}^n)$ given by
  \begin{equation*}
    b_I^{(\varepsilon)} = \sum_{K \in \mathcal{B}_I} \varepsilon_K h_K,
    \qquad I\in \mathcal{D}^n,
  \end{equation*}
  such that the following conditions are satisfied:
  \begin{enumerate}[(i)]
  \item\label{enu:thm:quasi-diag-i} $\mathcal{B}_I\subset \mathcal{D}^N$, $I\in\mathcal{D}^n$, and
    $(\mathcal{B}_I : I\in \mathcal{D}^n)$ satisfies Jones' compatibility
    conditions~\textrefp[J]{enu:j1}--\textrefp[J]{enu:j4} with constant $\kappa_J = (1-\eta)^{-1}$.
    
  \item\label{enu:thm:quasi-diag-ii} $(b_I^{(\varepsilon)} : I\in \mathcal{D}^n)$
    almost-diagonalizes $T$ in such way that $T$ has $\delta$-large diagonal with respect to
    $(b_I^{(\varepsilon)} : I\in \mathcal{D}^n)$.  To be more precise, we have the estimates
    \begin{subequations}\label{eq:thm:quasi-diag-ii}
      \begin{align}
        \sum_{j=1}^{i-1} |\langle T b_j^{(\varepsilon)}, b_i^{(\varepsilon)}\rangle|
        + |\langle b_i^{(\varepsilon)}, T^* b_j^{(\varepsilon)}\rangle|
        & \leq \eta 4^{-i} \|b_i^{(\varepsilon)}\|_2^2,
          \label{eq:thm:quasi-diag-ii:a}\\
        \langle T b_i^{(\varepsilon)}, b_i^{(\varepsilon)} \rangle
        & \geq \delta\|b_i^{(\varepsilon)}\|_2^2,
          \label{eq:thm:quasi-diag-ii:b}
      \end{align}
    \end{subequations}
    for all $1\leq i\leq 2^{n+1}-1$.
  \end{enumerate}
\end{thm}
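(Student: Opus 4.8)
The plan is to build the collections $(\mathcal{B}_I : I\in\mathcal{D}^n)$ and the signs $\varepsilon$ by a recursion that runs through $\mathcal{D}^n$ in the natural order $\mathcal{O}$, combining the selection device of Lemma~\ref{lem:comb-1} (which here replaces the weak-limit arguments for Rademacher functions used in~\cite{lechner:2016:factor-SL}) with a first-moment choice of signs in the spirit of Andrew~\cite{andrew:1979}. In parallel with $\mathcal{B}_I$ I carry a \emph{region} $\Omega_I$, a finite union of dyadic intervals, subject to: $\Omega_{[0,1)}=[0,1)$; $\bigcup\mathcal{B}_I\subset\Omega_I$; and, for each child $I'$ of $I$ in $\mathcal{D}^n$, $\Omega_{I'}=\bigcup_{K\in\mathcal{B}_I}K'$, where $K'$ is the corresponding half (left for the left child, right for the right) of $K$. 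Thus the regions nest exactly like the dyadic tree, and the two children of $I$ split every constituent interval of $\Omega_I$ (each $K\in\mathcal{B}_I$) evenly between them.

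In the recursion step for $I$, with $i=\mathcal{O}(I)$, the blocks $b_1^{(\varepsilon)},\dots,b_{i-1}^{(\varepsilon)}$ are already fixed. I form the local frequency weight $\omega$ of Lemma~\ref{lem:comb-1} from the suitably normalized functions $Tb_j^{(\varepsilon)}\in SL^\infty$ and $T^*b_j^{(\varepsilon)}\in(SL^\infty)^*$, $1\leq j<i$; there are at most $2(i-1)\leq 2(2^{n+1}-1)$ of them, each of norm at most $\Gamma$. I apply Lemma~\ref{lem:comb-1} separately on each constituent $K_0$ of $\Omega_I$, with a level parameter $r$ larger than every level used so far; a pigeonhole over the constituents (which costs only an enlargement of the level window of Lemma~\ref{lem:comb-1} by the square of their number) yields one common level $k_I$ and, for each $K_0$, a collection $\mathcal{G}_{k_I}(K_0)$ with $\omega(K)\leq\tau|K|$ on it and $\bigl|\bigcup\mathcal{G}_{k_I}(K_0)\bigr|\geq(1-\rho)|K_0|$. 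I put $\mathcal{B}_I=\bigcup_{K_0}\mathcal{G}_{k_I}(K_0)$, discarding if necessary the intervals on which $\langle Th_K,h_K\rangle$ carries the non-dominant sign so that $\langle Th_K,h_K\rangle\geq\delta|K|$ for all $K\in\mathcal{B}_I$, and define the children's regions as above. Finally, with $\mathcal{B}_I$ fixed, I choose $(\varepsilon_K)_{K\in\mathcal{B}_I}$ so that $\sum_{K\neq K'\in\mathcal{B}_I}\varepsilon_K\varepsilon_{K'}\langle Th_K,h_{K'}\rangle\geq0$; such a choice exists since this quantity averages to $0$ over uniformly random signs, and (crucially) the constraints \eqref{eq:thm:quasi-diag-ii:a} for later indices involve $b_i^{(\varepsilon)}$ only through sign-free majorants, so the signs on $\mathcal{B}_I$ remain free.

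It then remains to check (i) and (ii). Conditions \textrefp[J]{enu:j1}--\textrefp[J]{enu:j3} are immediate: the $\mathcal{B}_I$ sit at strictly increasing levels along each chain of $\mathcal{D}^n$ and over disjoint parts of $[0,1)$ along each antichain, and $\bigcup\mathcal{B}_{I'}\subset\bigcup\mathcal{B}_I$ whenever $I'\subset I$. For \textrefp[J]{enu:j4}, the halving structure together with the uniform $(1-\rho)$-covering gives, by a telescoping estimate over the $s\leq n$ generations separating $I'$ from $I$, that $|N\cap B_{I'}|/|N|\geq(1-\rho)^s\,|B_{I'}|/|B_I|$ for every $N\in\mathcal{B}_I$; calibrating $\rho$ by $(1-\rho)^n\geq1-\eta$ yields \textrefp[J]{enu:j4} with $\kappa_J=(1-\eta)^{-1}$. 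Estimate \eqref{eq:thm:quasi-diag-ii:a} holds because, for any signs, both $|\langle Tb_j^{(\varepsilon)},b_i^{(\varepsilon)}\rangle|$ and $|\langle b_i^{(\varepsilon)},T^*b_j^{(\varepsilon)}\rangle|$ are dominated by $\sum_{K\in\mathcal{B}_i}$ of the corresponding Haar coefficients, hence by $\sum_{K\in\mathcal{B}_i}\omega(K)\leq\tau\|b_i^{(\varepsilon)}\|_2^2$, and a small enough $\tau=\tau(n,\Gamma,\eta)$ absorbs the factor $i\leq 2^{n+1}-1$. Estimate \eqref{eq:thm:quasi-diag-ii:b} is then immediate: the diagonal part of $\langle Tb_i^{(\varepsilon)},b_i^{(\varepsilon)}\rangle$ equals $\sum_{K\in\mathcal{B}_i}\langle Th_K,h_K\rangle\geq\delta\|b_i^{(\varepsilon)}\|_2^2$, and the remaining (off-diagonal) part is $\geq0$ by the choice of signs. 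Lastly $\mathcal{B}_I\subset\mathcal{D}^N$ with $N$ the deepest level produced, and $N$ is determined through Lemma~\ref{lem:comb-1} only by $\tau$, $\rho$ and the constituent counts of the $\Omega_J$ — all functions of $n,\Gamma,\eta$ alone — so $N=N(n,\Gamma,\eta)$, independent of $\delta$.

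I expect the delicate points to be twofold. First, obtaining Jones' conditions with the prescribed constant $(1-\eta)^{-1}$: this forces both the exact ``halving'' region structure and the \emph{uniform}, rather than merely average, covering of each constituent, which is why one needs the multi-constituent form of Lemma~\ref{lem:comb-1} and the calibration $(1-\rho)^n\geq1-\eta$. Second, keeping $N$ independent of $\delta$ while still preserving a $\delta$-large diagonal: the diagonal is held large purely by the sign-free inequality $\sum_{K\in\mathcal{B}_i}\langle Th_K,h_K\rangle\geq\delta\|b_i^{(\varepsilon)}\|_2^2$ combined with the first-moment choice of signs, rather than by forcing the within-block off-diagonal terms to be small — the latter route would make the required depth, hence $N$, grow with $\delta^{-1}$.
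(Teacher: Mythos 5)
Your overall scheme is the paper's: a Gamlen--Gaudet recursion through $\mathcal{D}^n$ in the order $\mathcal{O}$, with each new collection selected by Lemma~\ref{lem:comb-1} inside the halves of the previously chosen intervals, Jones' conditions checked by a telescoping covering estimate, and the within-block off-diagonal part made nonnegative by averaging over signs. The genuine gap is the device you use to secure~\eqref{eq:thm:quasi-diag-ii:b}, namely ``discarding the intervals on which $\langle Th_K,h_K\rangle$ carries the non-dominant sign so that $\langle Th_K,h_K\rangle\geq\delta|K|$ for all $K\in\mathcal{B}_I$''. Nothing forces the positive-diagonal intervals to be spread evenly over the constituents of $\Omega_I$: inside one constituent $N$ of the parent collection they may carry almost all of the measure, inside another almost none (or none at all, since positivity of $\langle Th_N,h_N\rangle$ says nothing about the diagonal entries of the subintervals of $N$). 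After discarding, the per-constituent covering ratio is therefore no longer $1-\rho$ and can even be $0$; but that uniform per-constituent covering is exactly what drives the verification of \textrefp[J]{enu:j4} (the analogue of~\eqref{eq:proof:J4:1}), so \textrefp[J]{enu:j4} fails not just with $\kappa_J=(1-\eta)^{-1}$ but with every finite constant, and the lower bound $|B_I|\geq(1-\eta)|I|$ of Remark~\ref{rem:quasi-diag}, used later in the factorization, is lost as well. Moreover, for an operator whose diagonal is entirely negative (e.g.\ $T=-\Id$, $\delta=1$) there is nothing to keep, which shows that a literally positive diagonal for $T$ itself cannot be forced by any selection. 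The paper disposes of the signs once and for all before the construction: by $1$-unconditionality one composes $T$ with the diagonal isometry $h_K\mapsto\theta_K h_K$, $\theta_K=\sign\langle Th_K,h_K\rangle$, i.e.\ one may assume $\langle Th_K,h_K\rangle\geq\delta|K|$ for every $K$ (only $|\langle Tb_i^{(\varepsilon)},b_i^{(\varepsilon)}\rangle|\geq\delta\|b_i^{(\varepsilon)}\|_2^2$ is used downstream); then nothing is discarded, the full $(1-\rho_i)$-covers are retained, and your telescoping argument for \textrefp[J]{enu:j4} goes through as intended.

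A second, quantitative flaw: a single threshold $\tau=\tau(n,\Gamma,\eta)$ fixed in advance cannot absorb the constants. To apply Lemma~\ref{lem:comb-1} at step $i$ you must normalize the functionals $T^*b_j^{(\varepsilon)}$ so that $\sum_j\|g_j\|_{(SL^\infty)^*}\leq|K_0|$, and the constituents $K_0$ have measure comparable to $2^{-m_{i-1}}$, the depth already reached; undoing this normalization, the selection $\omega(K)\leq\tau|K|$ only bounds the true off-diagonal sums by roughly $2^{i}2^{m_{i-1}}\Gamma\,\tau\,\|b_i^{(\varepsilon)}\|_2^2$, as in~\eqref{eq:proof:off-diagonal_estimate}. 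Shrinking $\tau$ lengthens the level window of Lemma~\ref{lem:comb-1} and hence makes $2^{m_{i-1}}$ grow like $2^{4/(\rho^2\tau^2)}$, so $\tau\,2^{m_{i-1}}\to\infty$ as $\tau\to0$: no $\tau$ chosen before the construction works. The cure is the paper's recursive calibration~\eqref{eq:proof:constants:1}, where $\tau_i\approx\eta 8^{-i}2^{-m_{i-1}}/\Gamma$ is chosen only after $m_{i-1}$ is known; your recursion accommodates this without structural change. (Also, the common level $k_I$ you extract by pigeonhole is unnecessary: different constituents may be covered at different levels within the same window, as in the paper.)
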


\begin{myproof}
  The proof is divided into the following steps:
  \begin{itemize}
  \item Preparation: setting up the inductive argument;
    
  \item Construction of $\mathcal{B}_{i_0}$: using the combinatorial Lemma~\ref{lem:comb-1} to select
    $\mathcal{B}_{i_0}$;
    
  \item Choosing the signs $\varepsilon_K\in\{\pm 1\}$, $K\in \mathcal{B}_{i_0}$: using a
    probabilistic argument;
    
  \item $(\mathcal{B}_j : j\leq i_0)$ satisfies Jones' compatibility conditions: verifying
    that~\textrefp[J]{enu:j1}--\textrefp[J]{enu:j4} is satisfied with constant
    $\kappa_J = (1-\eta)^{-1}$;
    
  \item $(b_i^{(\varepsilon)} : i\leq i_0)$ almost-diagonalizes $T$: showing
    that~\eqref{eq:thm:quasi-diag-ii:a} is satisfied;
    
  \item Conclusion of the proof: summarizing the previous steps.
  \end{itemize}
  
  \begin{proofstep}[Preparation]\hfill\\
    Let $n\in\mathbb{N}_0$, $\Gamma,\eta > 0$, $\delta \geq 0$, and define the following constants,
    which will be used within the proof:
    \begin{equation}\label{eq:proof:constants:1}
      \begin{aligned}
        \rho_i &= \eta 2^{-i},
        & \tau_{i+1} &= \eta 8^{-i-1} 2^{-m_i}/\Gamma,\\
        m_1 &=0, & m_{i+1} & = m_i + 1 + \Big\lfloor\frac{4}{\rho_{i+1}^2\tau_{i+1}^2}\Big\rfloor, &
        N & = m_{2^{n+1}-1},
      \end{aligned}
    \end{equation}
    for all $1\leq i\leq 2^{n+1}-1$.  Clearly, $N$ depends only on $n,\Gamma$ and $\eta$, i.e.
    $N=N(n,\Gamma,\eta)$.  Finally, let $T : SL^\infty_N\to SL^\infty_N$ be such that
    $\|T\|\leq \Gamma$ and
    \begin{equation*}
      |\langle T h_K, h_K \rangle| \geq \delta |K|,
      \qquad K\in \mathcal{D}^N.
    \end{equation*}

    Before we proceed with the proof, observe first, that by $1$-unconditionality, we can assume
    that
    \begin{equation*}
      \langle T h_K, h_K \rangle \geq \delta |K|,\qquad K\in \mathcal{D}^N.
    \end{equation*}
    Second, given $K \in \mathcal{D}^N$, we write
    \begin{subequations}\label{eq:decomp}
      \begin{equation}
        T h_K = \alpha_K h_K + r_K,
      \end{equation}
      where
      \begin{equation}
        \alpha_K = \frac{\langle T h_K, h_K \rangle}{|K|}
        \quad\text{and}\quad
        r_K = \sum_{\substack{L\in\mathcal{D}^N\\L\neq K}}
        \frac{\langle T h_K, h_L \rangle}{|L|} h_L.
      \end{equation}
    \end{subequations}
    Thirdly, note the estimate
    \begin{equation}\label{eq:a-estimate}
      \delta \leq \alpha_K \leq \|T\|,
      \qquad K\in\mathcal{D}^N.
    \end{equation}
  \end{proofstep}

  We will now inductively define the block basis $(b_I^{(\varepsilon)} : I\in\mathcal{D}^n)$.  To
  begin the induction, we simply put
  \begin{equation*}
    \mathcal{B}_1 = \mathcal{B}_{[0,1)} = \{[0,1)\}
    \qquad\text{and}\qquad
    b_1^{(\varepsilon)} = b_{[0,1)}^{(\varepsilon)} = h_{[0,1)}.
  \end{equation*}
  and note that $\mathcal{B}_1\subset\mathcal{D}^{m_1}$ by~\eqref{eq:proof:constants:1}.

  For the inductive step, let $i_0\geq 2$ and assume that
  \begin{itemize}
  \item we have already chosen finite collections $\mathcal{B}_j$ with
    $\mathcal{B}_j\subset\mathcal{D}^{m_j}\setminus\mathcal{D}^{m_{j-1}}$, $2\leq j \leq i_0-1$;

  \item the finite sequence of collections $(\mathcal{B}_j : 1 \leq j \leq i_0-1)$
    satisfies~\textrefp[J]{enu:j1}--\textrefp[J]{enu:j4} with constant
    $\kappa_J = (1 - 3\sum_{j=1}^{i_0-1}\rho_j)^{-1}$;

  \item we made a suitable choice of signs
    $\varepsilon = (\varepsilon_K \in \{\pm 1\} : K\in \bigcup_{j=1}^{i_0-1}\mathcal{B}_j)$;

  \item the block basis elements $b_j^{(\varepsilon)}$ have been defined by
    \begin{equation*}
      b_j^{(\varepsilon)} = \sum_{K\in \mathcal{B}_j} \varepsilon_K h_K,
      \qquad 1\leq j\leq i_0-1,
    \end{equation*}
    and satisfy~\eqref{eq:thm:quasi-diag-ii} for all $1\leq i\leq i_0-1$.
  \end{itemize}

  In the following step, we will choose a finite collection
  $\mathcal{B}_{i_0}\subset\mathcal{D}^{m_{i_0}}\setminus\mathcal{D}^{m_{i_0-1}}$, select signs
  $\varepsilon = (\varepsilon_K\in \{\pm 1\} : K\in \mathcal{B}_{i_0})$, so that if we put
  \begin{equation}\label{eq:induction-properties}
    b_{i_0}^{(\varepsilon)}
    = \sum_{K\in \mathcal{B}_{i_0}} \varepsilon_K h_K,
  \end{equation}
  $(b_j : 1\leq j \leq i_0)$ satisfies~\eqref{eq:thm:quasi-diag-ii} for all $1\leq i\leq i_0$, and
  $(\mathcal{B}_j : 1\leq j\leq i_0)$ satisfies~\textrefp[J]{enu:j1}--\textrefp[J]{enu:j4} with
  constant $\kappa_J = (1 - 3\sum_{j=1}^{i_0}\rho_j)^{-1}$.

  \begin{proofstep}[Construction of $\mathcal{B}_{i_0}$]\hfill\\
    Let $I_0\in \mathcal{D}$ be such that $\mathcal{O}(I_0) = i_0$.  Let $\widetilde I_0$ denote the
    unique dyadic interval with $\widetilde I_0 \supset I_0$ and $|\widetilde I_0| = 2 |I_0|$.
    Furthermore, for every dyadic interval $K_0$, we denote its left half by $K_0^\ell$ and its
    right half by $K_0^r$.  Following the construction principle of
    Gamlen-Gaudet~\cite{gamlen:gaudet:1973}, we define the collections of dyadic intervals
    \begin{equation*}
      \mathcal{B}_{\widetilde I_0}^\ell
      = \{ K_0^\ell : K_0\in \mathcal{B}_{\widetilde I_0} \}
      \qquad\text{and}\qquad
      \mathcal{B}_{\widetilde I_0}^r
      = \{ K_0^r : K_0\in \mathcal{B}_{\widetilde I_0} \}.
    \end{equation*}
    By induction hypothesis we know that
    $\mathcal{B}_{\widetilde I_0} = \mathcal{B}_{\mathcal{O}(\widetilde I_0)} \subset
    \mathcal{D}^{m_{\mathcal{O}(\widetilde I_0)}} \setminus \mathcal{D}^{m_{\mathcal{O}(\widetilde
        I_0)-1}}$, thus, since $(m_i)_{i=1}^{2^{n+1}-1}$ is a finite increasing sequence
    by~\eqref{eq:proof:constants:1}, and $\mathcal{O}(\widetilde I_0) \leq i_0-1$, we obtain
    \begin{equation}\label{eq:proof:past-frequencies}
      \min\{ |K_1| : K_1\in \mathcal{B}_{\widetilde I_0}^\ell\cup \mathcal{B}_{\widetilde I_0}^r \}
      \geq 2^{-m_{\widetilde I_0}-1}
      \geq 2^{-m_{i_0-1}-1}.
    \end{equation}
    Now, define the functions
    \begin{equation}\label{eq:past-functions}
      f_j
      = \frac{T b_j^{(\varepsilon)}}{2^j 2^{m_{i_0-1}+1}\Gamma}
      \qquad\text{and}\qquad
      g_j
      =  \frac{T^* b_j^{(\varepsilon)}}{2^j 2^{m_{i_0-1}+1}\Gamma},
      \qquad 1\leq j\leq i_0-1,
    \end{equation}
    and note that we have the estimates
    \begin{equation*}
      \sum_{j=1}^{i_0-1}\|f_j\|_{SL^\infty}
      \leq 1
      \quad\text{and}\quad
      \sum_{j=1}^{i_0-1} \|g_j\|_{(SL^\infty)^*}
      \leq 2^{-m_{i_0-1}-1},
      \quad 1\leq j\leq i_0-1.
    \end{equation*}
    The local frequency weight $\omega_{i_0} : \mathcal{D}\to [0,\infty)$ is given by
    \begin{equation}\label{eq:proof:frequency-weight}
      \omega_{i_0-1}(K)
      = \sum_{j=1}^{i_0-1} |\langle f_j, h_{K} \rangle| + |\langle h_K, g_j \rangle|,
      \qquad K\in\mathcal{D}.
    \end{equation}
    By Lemma~\ref{lem:comb-1} and~\eqref{eq:proof:past-frequencies}, we find for each
    $K_1\in \mathcal{B}_{\widetilde I_0}^\ell\cup \mathcal{B}_{\widetilde I_0}^r$ a collection of
    pairwise disjoint dyadic intervals $\mathcal{Z}_{i_0}(K_1)$ such that
    \begin{subequations}\label{eq:proof:collection:properties}
      \begin{equation}\label{eq:proof:collection:properties:a}
        \big|\bigcup \mathcal{Z}_{i_0}(K_1)\big|
        \geq (1-\rho_{i_0})|K_1|,
        \qquad\omega_{i_0-1}(K)
        \leq \tau_{i_0} |K|,\ K\in\mathcal{Z}_{i_0}(K_1).
      \end{equation}
      Recall that by~\eqref{eq:proof:constants:1} we have
      $m_{i_0} = m_{i_0-1} + 1 + \big\lfloor\frac{4}{\rho_{i_0}^2\tau_{i_0}^2}\big\rfloor$,
      hence~\eqref{eq:proof:past-frequencies} yields
      \begin{equation}\label{eq:proof:collection:properties:b}
        \min\{ |K| : K\in\mathcal{Z}_{i_0}(K_1) \}
        \geq |K_1| 2^{-\big\lfloor\frac{4}{\rho_{i_0}^2\tau_{i_0}^2}\big\rfloor}
        \geq 2^{-m_{i_0}}.
      \end{equation}
    \end{subequations}
    If $I_0$ is the \emph{left half} of $\widetilde I_0$, we put
    \begin{subequations}\label{eq:intervals}
      \begin{equation}\label{eq:intervals:a}
        \mathcal{B}_{i_0}
        = \mathcal{B}_{I_0}
        = \bigcup \big\{ \mathcal{Z}_{i_0}(K_1) : K_1\in \mathcal{B}_{\widetilde I_0}^\ell \big\},
      \end{equation}
      and if $I_0$ is the \emph{right half} of $\widetilde I_0$, we define
      \begin{equation}\label{eq:intervals:b}
        \mathcal{B}_{i_0}
        = \mathcal{B}_{I_0}
        = \bigcup \big\{ \mathcal{Z}_{i_0}(K_1) : K_1\in \mathcal{B}_{\widetilde I_0}^r \big\}.
      \end{equation}
      See Figure~\ref{fig:gamlen-gaudet} for a depiction of the collection $\mathcal{B}_{i_0}$.
      \begin{figure}[bt]
        \begin{center}
          \includegraphics{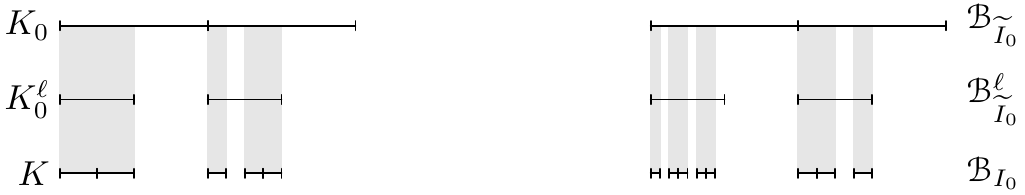}
        \end{center}
        \caption{The picture shows the construction of $\mathcal{B}_{I_0}$, if $I_0$ is the left
          half of $\widetilde I_0$.  The large dyadic intervals $K_0$ on top form the set
          $\mathcal{B}_{\widetilde I_0}$.  The medium sized dyadic intervals $K_0^\ell$ denote the
          left half of the $K_0$, and the set $\mathcal{B}_{\widetilde I_0}^\ell$ is the collection
          of all the $K_0^\ell$.  The small intervals $K$ at the bottom (which are selected with
          Lemma~\ref{lem:comb-1}) form the almost-cover of each of the intervals in
          $\bigcup \mathcal{B}_{\widetilde I_0}^\ell$.  The collection of all those $K$ is denoted
          by $\mathcal{B}_{I_0}$.}
        \label{fig:gamlen-gaudet}
      \end{figure}
    \end{subequations}
    By~\eqref{eq:proof:collection:properties:b} and~\eqref{eq:intervals}, we have the inclusion
    \begin{equation}\label{eq:intervals:frequency:estimate}
      \mathcal{B}_{i_0}\subset \mathcal{D}^{m_{i_0}}.
    \end{equation}
    In either of the cases~\eqref{eq:intervals:a} and~\eqref{eq:intervals:b}, we put (with a slight
    abuse of notation)
    \begin{equation}\label{eq:block-basis-candidate}
      b_{i_0}^{(\varepsilon)}
      = b_{I_0}^{(\varepsilon)}
      = \sum_{K\in \mathcal{B}_{I_0}} \varepsilon_K h_K,
    \end{equation}
    for all choices of signs $\varepsilon = (\varepsilon_K\in\{\pm 1\} : K\in \mathcal{B}_{I_0})$.
  \end{proofstep}

  \begin{proofstep}[Choosing the signs $\varepsilon_K\in\{\pm 1\}$, $K\in \mathcal{B}_{i_0}$]\hfill\\
    Continuing with the proof, we obtain from~\eqref{eq:decomp} that for all choices of signs
    $\varepsilon = (\varepsilon_K\in\{\pm 1\} : K\in \mathcal{B}_{I_0})$
    \begin{equation}\label{eq:block-basis-candidate:identity}
      T b_{i_0}^{(\varepsilon)}
      = \sum_{K\in \mathcal{B}_{i_0}} \varepsilon_K \alpha_K h_K + R_{i_0}^{(\varepsilon)},
    \end{equation}
    where
    \begin{equation}\label{eq:block-basis-candidate:rest}
      R_{i_0}^{(\varepsilon)} = \sum_{K\in \mathcal{B}_{i_0}} \varepsilon_K r_K.
    \end{equation}
    Now, put
    \begin{equation*}
      X_{i_0}(\varepsilon) = \langle R_{i_0}^{(\varepsilon)}, b_{i_0}^{(\varepsilon)} \rangle,
      \qquad \varepsilon = (\varepsilon_K\in \{\pm 1\} : K\in \mathcal{B}_{i_0}),
    \end{equation*}
    and observe that by~\eqref{eq:block-basis-candidate:identity} and~\eqref{eq:a-estimate} we
    obtain
    \begin{equation}\label{eq:diagonal-estimate:1}
      \langle T b_{i_0}^{(\varepsilon)}, b_{i_0}^{(\varepsilon)}\rangle
      \geq \delta \|b_{i_0}^{(\varepsilon)}\|_2^2 + X_{i_0}(\varepsilon),
      \qquad \varepsilon = (\varepsilon_K\in \{\pm 1\} : K\in \mathcal{B}_{i_0}).
    \end{equation}
    By~\eqref{eq:decomp}, we have that $\langle r_K, h_K \rangle = 0$, hence
    \begin{equation*}
      X_{i_0}(\varepsilon)
      = \sum_{\substack{K_0, K_1\in \mathcal{B}_{i_0}\\K_0\neq K_1}} \varepsilon_{K_0} \varepsilon_{K_1}
      \langle r_{K_0}, h_{K_1} \rangle,
      \qquad \varepsilon = (\varepsilon_K\in \{\pm 1\} : K\in \mathcal{B}_{i_0}).
    \end{equation*}
    Now, let $\cond_\varepsilon$ denote the average over all possible choices of signs
    $\varepsilon = (\varepsilon_K \in \{\pm 1\} : K \in \mathcal{B}_{i_0})$.  If $K_0\neq K_1$, then
    $\cond_\varepsilon \varepsilon_{K_0} \varepsilon_{K_1} = 0$; therefore
    \begin{equation*}
      \cond_\varepsilon X_{i_0} = 0.
    \end{equation*}
    Taking the average $\cond_\varepsilon$ in~\eqref{eq:diagonal-estimate:1} we obtain
    \begin{equation}\label{eq:diagonal-estimate:2}
      \cond_\varepsilon \langle T b_{i_0}^{(\varepsilon)}, b_{i_0}^{(\varepsilon)}\rangle
      \geq \delta \cond_\varepsilon \|b_{i_0}^{(\varepsilon)}\|_2^2.
    \end{equation}
    Hence, by~\eqref{eq:diagonal-estimate:2} there is an
    $\varepsilon = (\varepsilon_K\in \{\pm 1\} : K\in \mathcal{B}_{i_0})$ such that
    \begin{equation}\label{eq:diagonal_estimate}
      \langle T b_{i_0}^{(\varepsilon)}, b_{i_0}^{(\varepsilon)} \rangle
      \geq \delta \|b_{i_0}^{(\varepsilon)}\|_2^2.
    \end{equation}
    This concludes the constructive part of the inductive step.  Next, we will demonstrate that our
    construction has the properties claimed in the theorem.
  \end{proofstep}

  \begin{proofstep}[$(\mathcal{B}_j : 1\leq j\leq i_0)$ satisfies Jones' compatibility
    conditions]\hfill\\
    Note that by our induction hypothesis, \eqref{eq:natural-order}
    and~\eqref{eq:intervals:frequency:estimate}, we obtain that
    $\mathcal{B}_j\subset \mathcal{D}^N$, $1\leq j \leq i_0$.  We will now show inductively that the
    finite sequence of collections of dyadic intervals $(B_j : 1\leq j\leq i_0)$
    satisfies~\textrefp[J]{enu:j1}--\textrefp[J]{enu:j4} with constant
    $\kappa_J = (1 - \sum_{j=1}^{i_0}\rho_j)^{-1}$.  By our choice of $\mathcal{Z}_{i_0}(K_1)$
    in~\eqref{eq:proof:collection:properties}, it should be clear that the
    properties~\textrefp[J]{enu:j1}--\textrefp[J]{enu:j3} are satisfied.  We will now prove
    that~\textrefp[J]{enu:j4} is satisfied, as well.

    To this end, let us record that by induction hypothesis, we have the estimate
    \begin{equation}\label{eq:proof:J4:ind-hypo}
      \frac{|K\cap B_{I_0}|}{|K|}
      \geq \Big(1 - \sum_{j=1}^{i_0-1}\rho_j\Big) \frac{|B_{I_0}|}{|B_I|},
    \end{equation}
    for all $I_0,I\in\mathcal{D}^n$ with $\mathcal{O}(I_0)\leq i_0-1$, $I_0\subset I$ and
    $K\in \mathcal{B}_I$.  Now, let $I_0,I\in\mathcal{D}^n$ be such that $\mathcal{O}(I_0)=i_0$,
    $I\supsetneq I_0$ and let $K\in \mathcal{B}_I$.  First, note that~\textrefp[J]{enu:j3},
    \textrefp[J]{enu:j2} and Lemma~\ref{lem:projection-simple}~\eqref{enu:projection-simple:3} give us
    \begin{equation*}
      |K\cap B_{I_0}|
      = |K\cap B_{I_0}\cap B_{\widetilde I_0}|
      = \sum_{\substack{L\in\mathcal{B}_{\widetilde I_0}\\L\subset K}}
      |L\cap B_{I_0}|
      = \sum_{\substack{L\in\mathcal{B}_{\widetilde I_0}\\L\subset K}}
      |L^\ell\cap B_{I_0}| + |L^r\cap B_{I_0}|.
    \end{equation*}
    Recall that $L^\ell$ denotes the left half of $L$, and that $L^r$ denotes the right half of $L$.
    Considering our choice for $\mathcal{Z}_{I_0}(L^\ell)$ and $\mathcal{Z}_{I_0}(L^r)$
    in~\eqref{eq:proof:collection:properties:a}, and for $\mathcal{B}_{I_0}$
    in~\eqref{eq:intervals}, we find that
    \begin{equation*}
      |L^\ell\cap B_{I_0}|\geq (1-\rho_{I_0})|L^\ell|
      \quad\text{and}\quad
      |L^r\cap B_{I_0}|\geq (1-\rho_{I_0})|L^r|,
      \qquad L\in\mathcal{B}_{I_0}.
    \end{equation*}
    Now observe that by~\eqref{eq:intervals}, $B_{I_0}\cap L^r = \emptyset$, if $I_0$ is the left
    half of $\widetilde I_0$, and that $B_{I_0}\cap L^\ell = \emptyset$, if $I_0$ is the right half
    of $\widetilde I_0$.  Combining everything after~\eqref{eq:proof:J4:ind-hypo} with
    Lemma~\ref{lem:projection-simple}~\eqref{enu:projection-simple:3} yields
    \begin{equation}\label{eq:proof:J4:1}
      |K\cap B_{I_0}|
      \geq \frac{1}{2}(1-\rho_{I_0})
      \sum_{\substack{L\in\mathcal{B}_{\widetilde I_0}\\L\subset K}} |L|
      = \frac{1}{2}(1-\rho_{I_0}) |K\cap B_{\widetilde I_0}|.
    \end{equation}
    Similar considerations give us
    \begin{equation}\label{eq:proof:J4:2}
      |B_{\widetilde I_0}|
      = \sum_{L\in\mathcal{B}_{\widetilde I_0}} |L^\ell|
      = \sum_{L\in\mathcal{B}_{\widetilde I_0}} |L^r|
      \geq 2 |B_{I_0}|.
    \end{equation}

    Note that by~\eqref{eq:natural-order} $\mathcal{O}(\widetilde I_0)\leq \mathcal{O}(I_0)-1$.
    Hence, the estimate~\eqref{eq:proof:J4:1}, our induction hypothesis~\eqref{eq:proof:J4:ind-hypo}
    and \eqref{eq:proof:J4:2} yield
    \begin{equation}\label{eq:proof:J4:3}
      \begin{aligned}
        |K\cap B_{I_0}| & \geq \frac{1}{2}(1-\rho_{I_0})\Big(1 - \sum_{j=1}^{i_0-1}\rho_j\Big) |K|
        \frac{|B_{\widetilde I_0}|}{|B_I|}\\
        & \geq \Big(1 - \sum_{j=1}^{i_0}\rho_j\Big) |K| \frac{|B_{I_0}|}{|B_I|}.
      \end{aligned}
    \end{equation}
    Thus, we proved that~\eqref{eq:proof:J4:ind-hypo} holds true for $i_0$ instead of $i_0-1$.
  \end{proofstep}

  \begin{proofstep}[$(b_j^{(\varepsilon)} : 1\leq j\leq i_0)$ almost-diagonalizes $T$]\hfill\\
    By~\eqref{eq:past-functions} and~\eqref{eq:block-basis-candidate}, we obtain
    \begin{equation*}
      \sum_{j=1}^{i_0-1} |\langle T b_j^{(\varepsilon)}, b_{i_0}^{(\varepsilon)}\rangle|
      + |\langle b_{i_0}^{(\varepsilon)}, T^* b_j^{(\varepsilon)}\rangle|
      \leq 2^{i_0}2^{m_{i_0-1}}\Gamma\sum_{K\in\mathcal{B}_{i_0}} \sum_{j=1}^{i_0-1}
      |\big\langle f_j, h_K\big\rangle|
      + |\big\langle h_K, g_j\big\rangle|.
    \end{equation*}
    Recall that by~\eqref{eq:proof:constants:1} $\tau_{i_0} = \eta 8^{-i_0} 2^{-m_{i_0-1}}/\Gamma$,
    thus, the above inequality, \eqref{eq:proof:frequency-weight},
    \eqref{eq:proof:collection:properties:a}, \eqref{eq:intervals}
    and~\eqref{eq:block-basis-candidate} yield
    \begin{equation}\label{eq:proof:off-diagonal_estimate}
      \sum_{j=1}^{i_0-1} |\langle T b_j^{(\varepsilon)}, b_{i_0}^{(\varepsilon)}\rangle|
      + |\langle b_{i_0}^{(\varepsilon)}, T^* b_j^{(\varepsilon)}\rangle|
      \leq 2^{i_0}2^{m_{i_0-1}}\Gamma \sum_{K\in\mathcal{B}_{i_0}} \tau_{i_0} |K|
      = \eta 4^{-i_0} \|b_{i_0}^{(\varepsilon)}\|_2^2.
    \end{equation}
    Hence, \eqref{eq:proof:off-diagonal_estimate} combined with~\eqref{eq:diagonal_estimate} shows
    that~\eqref{eq:thm:quasi-diag-ii} is true for all $1\leq i\leq i_0$.
  \end{proofstep}

  \begin{proofstep}[Conclusion of the proof]\hfill\\
    Thus far, we proved the following:
    \begin{itemize}
    \item we chose finite collections $\mathcal{B}_j$ with
      $\mathcal{B}_j\subset\mathcal{D}^{m_j}\setminus\mathcal{D}^{m_{j-1}}$, $2\leq j \leq i_0$ and
      $\mathcal{B}_1\subset \mathcal{D}^{m_1}$;

    \item the finite sequence of collections $(\mathcal{B}_j : 1 \leq j \leq i_0)$
      satisfies~\textrefp[J]{enu:j1}--\textrefp[J]{enu:j4} with constant
      $\kappa_J = (1 - \sum_{j=1}^{i_0}\rho_j)^{-1}$;

    \item we chose signs
      $\varepsilon = (\varepsilon_K \in \{\pm 1\} : K\in \bigcup_{j=1}^{i_0}\mathcal{B}_j)$;

    \item the block basis elements $b_j^{(\varepsilon)}$ have been defined by
      \begin{equation*}
        b_j^{(\varepsilon)} = \sum_{K\in \mathcal{B}_j} \varepsilon_K h_K,
        \qquad 1\leq j\leq i_0,
      \end{equation*}
      and satisfy~\eqref{eq:thm:quasi-diag-ii} for all $1\leq i\leq i_0$.
    \end{itemize}
    We conclude the proof by stopping the induction process after $2^{n+1}-1$ steps and considering
    the definition of the constants in~\eqref{eq:proof:constants:1}.
  \end{proofstep}
\end{myproof}

\begin{rem}\label{rem:quasi-diag}
  We note the following:
  \begin{enumerate}[(i)]
  \item\label{enu:rem:quasi-diag:i} Recall that in~\eqref{eq:proof:constants:1} we defined
    $\rho_I = \rho_i = \eta 2^{-i}$, $1\leq i\leq 2^{n+1}-1$, whenever $\mathcal{O}(I)=i$.  Now,
    observe that by summing~\eqref{eq:proof:J4:1} over all $K\in\mathcal{B}_I$, we obtain
    by~\textrefp[J]{enu:j2} and~\textrefp[J]{enu:j3} that
    \begin{equation*}
      |B_I|
      \geq \frac{1}{2}(1-\rho_I) |B_{\widetilde I}|,
      \qquad I\in\mathcal{D}^n\setminus\{[0,1)\}.
    \end{equation*}
    Recall that we chose $\mathcal{B}_{[0,1)}=\{[0,1)\}$, and note that iterating the latter
    inequality yields
    \begin{equation}\label{eq:rem:quasi-diag:i}
      |B_I|
      \geq |I| \prod_{J\in\mathcal{D}^n}(1-\rho_J)
      \geq |I| (1-\eta),
      \qquad I\in\mathcal{D}^n.
    \end{equation}
    Furthermore, it is a simple observation that we have the estimate $|B_I|\leq |I|$,
    $I\in\mathcal{D}^n$.
    
  \item\label{enu:rem:quasi-diag:ii} If $\delta = 0$, we can choose the signs arbitrarily; in
    particular, we can choose $\varepsilon_K = 1$, $K\in\mathcal{D}^N$, and in that case
    $b_j^{(\varepsilon)} = b_j$ (see~\eqref{eq:thm:projection:block-basis}).
  \end{enumerate}
\end{rem}

\subsection{Almost-annihilating finite dimensional subspaces of
  \bm{$SL^\infty_N$}}\label{subsec:almost-annihil}\hfill

\noindent
When using Bourgain's localization method, one eventually needs to pass from the local factorization
results to factorization results on the direct sum of the finite dimensional spaces (which in our
case are $SL^\infty_n$, $n\in\mathbb{N}_0$).  One of the ingredients is showing that in a large
enough space, any finite dimensional space can be almost-annihilated by a bounded projection that
has a large image, which goes back to~\cite[Lemma~1]{bourgain:1983}.  The following
Definition~\ref{dfn:property-pafds} is merely an abstract version for sequences of finite dimensional Banach
spaces of the corresponding Lemma in~\cite[Lemma~2]{bourgain:1983}.

In~\cite{lechner:2016-factor-mixed}, the following notion was introduced.
\begin{dfn}\label{dfn:property-pafds}
  We say that a non-decreasing sequence of finite dimensional Banach spaces
  $(X_n)_{n\in \mathbb{N}}$ with $\sup_n \dim X_n = \infty$ has the \emph{property that projections
    almost annihilate finite dimensional subspaces with constant $C_P > 0$}, if the following
  conditions are satisfied:

  For all $n,d\in \mathbb{N}$ and $\eta > 0$ there exists an integer $N=N(n,d,\eta)$ such that for
  any $d$-dimensional subspace $F\subset X_N$ there exists a bounded projection $Q : X_N\to X_N$ and
  an isomorphism $R : X_n\to Q(X_N)$ such that
  \begin{enumerate}[(i)]
  \item $\|Q\| \leq C_P$,
  \item $\|R\|, \|R^{-1}\| \leq C_P$,
  \item $\|Q x\| \leq \eta \|x\|$, for all $x\in F$.
  \end{enumerate}
\end{dfn}

In the following Corollary~\ref{cor:projections-that-annihilate} we establish that
$(SL^\infty_n)_{n\in\mathbb{N}_0}$ has the property that projections almost annihilate finite
dimensional subspaces, which is a crucial ingredient in the proof of Theorem~\ref{thm:primary}.
\begin{cor}\label{cor:projections-that-annihilate}
  Let $n,d \in \mathbb{N}_0$ and $\eta > 0$.  Then there exists an integer $N=N(n,d,\eta)$ such that
  for any $d$-dimensional subspace $F\subset SL^\infty_N$ there exists a block basis
  $(b_I : I\in \mathcal{D}^n)$ satisfying the following conditions:
  \begin{enumerate}[(i)]
  \item\label{enu:cor:projections-that-annihilate-i} $\mathcal B_I \subset \mathcal{D}^N$, for all
    $I \in \mathcal{D}^n$.
    
  \item\label{enu:cor:projections-that-annihilate-ii} For every finite sequence of scalars
    $(a_I : I \in \mathcal{D}^n)$, we have that
    \begin{equation}\label{eq:cor:projections-that-annihilate:ii}
      (1+\eta)^{-1} \Big\| \sum_{I\in \mathcal{D}^n} a_I h_I \Big\|_{SL^\infty}
      \leq \Big\| \sum_{I\in \mathcal{D}^n} a_I b_I \Big\|_{SL^\infty}
      \leq (1+\eta) \Big\| \sum_{I\in \mathcal{D}^n} a_I h_I \Big\|_{SL^\infty}.
    \end{equation}
    
  \item\label{enu:cor:projections-that-annihilate-iii} The orthogonal projection
    $Q : SL^\infty_N \rightarrow SL^\infty_N$ given by
    \begin{equation*}
      Q f = \sum_{I\in \mathcal{D}^n}
      \frac{\langle f, b_I \rangle}{\|b_I\|_2^2}\,
      b_I
    \end{equation*}
    satisfies the estimates
    \begin{equation}\label{eq:cor:projections-that-annihilate:iii}
      \begin{aligned}
        \|Q f\|_{SL^\infty} &\leq (1+\eta) \|f\|_{SL^\infty},
        &f &\in SL^\infty_N,\\
        \| Q f \|_{SL^\infty} &\leq \eta\, \|f\|_{SL^\infty}, &f &\in F.
      \end{aligned}
    \end{equation}
  \end{enumerate}
\end{cor}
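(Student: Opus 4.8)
The plan is to re-run the Gamlen--Gaudet induction from the proof of Theorem~\ref{thm:quasi-diag}, but with the operator-dependent frequency weight replaced by a fixed weight manufactured from $F$. First I would fix an Auerbach basis $(x_\ell)_{\ell=1}^{d}$ of $F$, so that $\|x_\ell\|_{SL^\infty}=1$ and the biorthogonal functionals, extended by Hahn--Banach to $(SL^\infty_N)^*$, satisfy $\|x_\ell^*\|\leq 1$. Put
\[
  \omega(K)=\frac1d\sum_{\ell=1}^{d}\bigl|\langle x_\ell,h_K\rangle\bigr|,\qquad K\in\mathcal{D},
\]
which is the local frequency weight of Lemma~\ref{lem:comb-1} associated to $f_\ell=x_\ell/d$ and $g_\ell=0$, $1\leq\ell\leq d$; note $\sum_\ell\|f_\ell\|_{SL^\infty}\leq1$, so the hypotheses of Lemma~\ref{lem:comb-1} are met at every dyadic interval $K_0$. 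I would fix a threshold $\tau$ comparable to $\eta/(d^2\sqrt{n+1})$ and parameters $\rho_i=\eta'2^{-i}$, $1\leq i\leq 2^{n+1}-1$, with $\eta'>0$ small to be chosen at the very end; since $\delta=0$ here, all signs are taken equal to $+1$ (cf.\ Remark~\ref{rem:quasi-diag}\eqref{enu:rem:quasi-diag:ii}), so that $b_I=\sum_{K\in\mathcal{B}_I}h_K$.

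Next I would construct $(\mathcal{B}_I:I\in\mathcal{D}^n)$ exactly as in the proof of Theorem~\ref{thm:quasi-diag}, the only changes being that the weight $\omega$ does not change from step to step and that the induction must start with an application of Lemma~\ref{lem:comb-1} rather than with $\mathcal{B}_{[0,1)}=\{[0,1)\}$: every block, including the top one, must avoid the low frequencies of $F$. Concretely, one application of Lemma~\ref{lem:comb-1} with $K_0=[0,1)$ produces a collection $\mathcal{B}_{[0,1)}$ of pairwise disjoint dyadic intervals with $\bigl|\bigcup\mathcal{B}_{[0,1)}\bigr|\geq1-\rho_1$ and $\omega(K)\leq\tau|K|$ for $K\in\mathcal{B}_{[0,1)}$; and for $I_0\in\mathcal{D}^n$ with dyadic parent $\widetilde I_0$ one forms $\mathcal{B}_{\widetilde I_0}^{\ell}$, $\mathcal{B}_{\widetilde I_0}^{r}$ as in that proof, applies Lemma~\ref{lem:comb-1} with $K_0=K_1$ for each $K_1$ in the relevant half-collection to obtain a collection $\mathcal{Z}_{i_0}(K_1)$ of pairwise disjoint dyadic intervals with $\bigl|\bigcup\mathcal{Z}_{i_0}(K_1)\bigr|\geq(1-\rho_{i_0})|K_1|$ and $\omega(K)\leq\tau|K|$ for $K\in\mathcal{Z}_{i_0}(K_1)$, and sets $\mathcal{B}_{i_0}=\bigcup_{K_1}\mathcal{Z}_{i_0}(K_1)$. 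Choosing the frequencies $m_i$ and $N=m_{2^{n+1}-1}$ analogously to~\eqref{eq:proof:constants:1} (with $\tau_i$ replaced by $\tau$, and $m_1$ adjusted to account for the first application of Lemma~\ref{lem:comb-1}) ensures $2^{-r}\leq|K_1|$ at every step, gives $\mathcal{B}_I\subset\mathcal{D}^N$, i.e.~\eqref{enu:cor:projections-that-annihilate-i}, and makes $N$ depend only on $n,d,\eta$. The verification that $(\mathcal{B}_I:I\in\mathcal{D}^n)$ satisfies~\textrefp[J]{enu:j1}--\textrefp[J]{enu:j4} with $\kappa_J=(1-\eta')^{-1}$ is then verbatim the corresponding step of the proof of Theorem~\ref{thm:quasi-diag}, as it uses only the Gamlen--Gaudet structure and the numbers $\rho_i$. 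Denoting by $B,Q_0$ the operators of Theorem~\ref{thm:projection} attached to this block basis, the operator $Q$ of the corollary equals $BQ_0$, so $\|Q\|\leq\|B\|\,\|Q_0\|\leq\kappa_J^{1/2}$; moreover $\sum_I a_I b_I=B(\sum_I a_I h_I)$ and $\sum_I a_I h_I=Q_0(\sum_I a_I b_I)$ for all scalars $(a_I)$. Hence, taking $\eta'$ small enough that $\kappa_J^{1/2}\leq1+\eta$, one obtains~\eqref{eq:cor:projections-that-annihilate:ii} and the first estimate in~\eqref{eq:cor:projections-that-annihilate:iii}.

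Finally I would prove the annihilation estimate. For each $I\in\mathcal{D}^n$ and each $K\in\mathcal{B}_I$ we have $\omega(K)\leq\tau|K|$, hence $|\langle x_\ell,h_K\rangle|\leq d\tau|K|$ for every $\ell$; summing over $K\in\mathcal{B}_I$ and using $\|b_I\|_2^2=|B_I|=\sum_{K\in\mathcal{B}_I}|K|$ yields $|\langle x_\ell,b_I\rangle|\leq\sum_{K\in\mathcal{B}_I}|\langle x_\ell,h_K\rangle|\leq d\tau\|b_I\|_2^2$. Since $Q=BQ_0$ with $\|B\|\leq1$, and since $\bigl\|\sum_{I\in\mathcal{D}^n}a_Ih_I\bigr\|_{SL^\infty}\leq\sqrt{n+1}\,\max_I|a_I|$ (each point of $[0,1)$ lies in exactly $n+1$ of the intervals of $\mathcal{D}^n$), it follows that $\|Qx_\ell\|_{SL^\infty}\leq\|Q_0x_\ell\|_{SL^\infty_n}\leq\sqrt{n+1}\,d\tau$. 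Now any $f\in F$ with $\|f\|_{SL^\infty}\leq1$ can be written $f=\sum_{\ell=1}^{d}c_\ell x_\ell$ with $|c_\ell|=|x_\ell^*(f)|\leq1$ by the Auerbach property, so $\|Qf\|_{SL^\infty}\leq\sum_{\ell=1}^{d}|c_\ell|\,\|Qx_\ell\|_{SL^\infty}\leq d^2\sqrt{n+1}\,\tau$, and the choice of $\tau$ gives the second estimate in~\eqref{eq:cor:projections-that-annihilate:iii}.

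The main difficulty is one of bookkeeping rather than of ideas: one must ensure that $N$ depends only on $n$, $d$ and $\eta$, never on the particular subspace $F$. This is exactly why the argument is anchored on an Auerbach basis---a fixed number $d$ of norm-one vectors carrying norm-one coordinate functionals---instead of on a finite $\varepsilon$-net of the unit sphere of $F$, whose cardinality would depend on $\varepsilon$ and thereby make the recursion defining $N$ circular. The only genuine departure from the proof of Theorem~\ref{thm:quasi-diag} is that here the weight $\omega$ is fixed once and for all (it is not updated by the previously chosen blocks) and that the top block $b_{[0,1)}$ must itself be produced by Lemma~\ref{lem:comb-1}.
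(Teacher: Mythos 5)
Your construction is correct, and it follows the paper's strategy: rerun the proof of Theorem~\ref{thm:quasi-diag} (with $\delta=0$, all signs $+1$) but drive the selections in Lemma~\ref{lem:comb-1} by a fixed frequency weight manufactured from finitely many elements of $F$, then invoke Theorem~\ref{thm:projection} for (i), (ii) and the first half of (iii); your coefficient bound $|\langle x_\ell,b_I\rangle|\leq d\tau\|b_I\|_2^2$ together with $\|\sum_I a_Ih_I\|_{SL^\infty}\leq\sqrt{n+1}\max_I|a_I|$ is a sound way to get the annihilation estimate. The one genuine divergence is the choice of the finite family: the paper takes an $\eta/2$-net $\{f_1,\dots,f_{k_0}\}$ of the unit ball of $F$ and obtains $\|Qf\|\leq\eta\|f\|$ on $F$ by approximating an arbitrary unit vector by a net point and using $\|Q\|\leq 1+\eta$, whereas you take an Auerbach basis and pay a factor $d$ through the biorthogonal functionals, absorbed into $\tau\sim\eta/(d^2\sqrt{n+1})$. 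Both are fine, and your closing criticism of the net is misplaced: the cardinality of an $\eta/2$-net of the unit ball of a $d$-dimensional space is bounded by a function of $d$ and $\eta$ alone (a standard volumetric bound), independent of the particular subspace $F$, so the paper's recursion for $N=N(n,d,\eta)$ is not circular; your Auerbach variant merely uses $d$ functions instead of roughly $(1+4/\eta)^d$, at the cost of the extra coefficient argument. A point in your favour is that you make explicit that the top block $\mathcal{B}_{[0,1)}$ must itself be produced by Lemma~\ref{lem:comb-1} (otherwise $b_{[0,1)}=h_{[0,1)}$ and $Q$ need not be small on, say, $F=\operatorname{span}\{h_{[0,1)}\}$), a modification the paper subsumes under ``in all stages of the proof''; as you note, this only shifts $m_1$ and leaves the verification of (J1)--(J4) and the constant $\kappa_J=(1-\eta')^{-1}$ unchanged.
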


\begin{proof}
  Corollary~\ref{cor:projections-that-annihilate} is obtained by modifying the proof of Theorem~\ref{thm:quasi-diag}
  in the following way: We choose a finite $\eta/2$-net $\{f_1,\ldots,f_{k_0}\}$ of the compact unit
  ball of the $d$-dimensional subspace $F$, and we use the frequency weight
  $\omega(K) = \sum_{k=1}^{k_0} |\langle f_k, h_K \rangle|$ in all stages of the proof instead of
  the ones defined in~\eqref{eq:proof:frequency-weight}.
\end{proof}

\begin{rem}\label{rem:property-pafds}
  Corollary~\ref{cor:projections-that-annihilate} implies that for any $\eta > 0$, the sequence of finite
  dimensional Banach spaces $(SL^\infty_n)_{n\in\mathbb{N}_0}$ has the property that projections
  almost annihilate finite dimensional subspaces with constant $1+\eta$.
\end{rem}

\section{Local factorization of the identity operator on $SL^\infty_N$ through operators with large
  diagonal}\label{sec:local-factor}

\noindent
Here, we will prove the local factorization results Theorem~\ref{thm:local-factor} and
Theorem~\ref{thm:local-primary}. The key ingredients are the finite dimensional quantitative
almost-diagonalization Theorem~\ref{thm:quasi-diag} and the projection Theorem~\ref{thm:projection}.

\subsection{Proof of Theorem~\ref{thm:local-factor}}\label{subsec:local-factor}\hfill

\noindent
The basic pattern of the following proof was recently employed in~\cite{lechner:mueller:2014,
  laustsen:lechner:mueller:2015,lechner:2016-factor-mixed,lechner:2016:factor-SL}.

Let $n\in\mathbb{N}_0$, $\Gamma,\eta > 0$, $\delta > 0$, and let $\eta_1 = \eta(n,\delta,\eta)$
denote the largest positive constant such that
\begin{equation}\label{eq:proof:local-factor:eta_1}
  0 < \eta_1 \leq 1/2,
  \qquad
  \frac{\eta_12^{n+1}}{\delta} \leq 1/2,
  \qquad
  \frac{1}{1 - \frac{\eta_12^{n+2}}{\delta}} \leq 1 + \eta.
\end{equation}
Thus, by Theorem~\ref{thm:quasi-diag}, there exists an integer
$N=N(n,\Gamma,\eta_1)=N(n,\Gamma,\eta,\delta)$, such that for any operator
$T : SL^\infty_N\to SL^\infty_N$ with $\|T\|\leq \Gamma$ and
\begin{equation*}
  |\langle T h_I, h_I \rangle| \geq \delta |I|,
  \qquad I\in \mathcal{D}^N,
\end{equation*}
there exists a finite sequence of collections $(\mathcal{B}_I : I\in \mathcal{D}^n)$ and a finite
sequence of signs $\varepsilon = (\varepsilon_K \in \{\pm 1\} : K\in \mathcal{B})$, where
$\mathcal{B} = \bigcup_{I\in\mathcal{D}^n}\mathcal{B}_I$, which generate the block basis of the Haar
system $(b_I^{(\varepsilon)} : I\in \mathcal{D}^n)$ given by
\begin{equation*}
  b_I^{(\varepsilon)} = \sum_{K \in \mathcal{B}_I} \varepsilon_K h_K,
  \qquad I\in \mathcal{D},
\end{equation*}
such that the following conditions are satisfied:
\begin{subequations}\label{eq:proof:local-factor:ii}
  \begin{enumerate}[(i)]
  \item\label{enu:proof:local-factor:quasi-diag-i} $\mathcal{B}_I\subset\mathcal{D}^N$ whenever
    $I\in\mathcal{D}^n$, and $(\mathcal{B}_I : I\in \mathcal{D}^n)$ satisfies Jones' compatibility
    conditions~\textrefp[J]{enu:j1}--\textrefp[J]{enu:j4} with constant
    $\kappa_J = (1-\eta_1)^{-1}$.
    
  \item\label{enu:proof:local-factor:quasi-diag-ii} For all $1\leq i\leq 2^{n+1}-1$, we have the
    estimates
    \begin{align}
      \sum_{j=1}^{i-1} |\langle T b_j^{(\varepsilon)}, b_i^{(\varepsilon)}\rangle|
      + |\langle b_i^{(\varepsilon)}, T^* b_j^{(\varepsilon)}\rangle|
      & \leq \eta_1 4^{-i} \|b_i^{(\varepsilon)}\|_2^2,
        \label{eq:proof:local-factor:ii:a}\\
      \langle T b_i^{(\varepsilon)}, b_i^{(\varepsilon)} \rangle
      & \geq \delta\|b_i^{(\varepsilon)}\|_2^2.
        \label{eq:proof:local-factor:ii:b}
    \end{align}
    Recall that $\mathcal{B}_i=\mathcal{B}_I$ and $b_i^{(\varepsilon)}=b_I^{(\varepsilon)}$,
    whenever $\mathcal{O}(I)=i$.
  \end{enumerate}
  Moreover, by~\textrefp[J]{enu:j2} and~\eqref{eq:rem:quasi-diag:i} in Remark~\ref{rem:quasi-diag}, we
  have the estimate
  \begin{equation}\label{eq:proof:local-factor:ii:c}
    \|b_I^{(\varepsilon)}\|_2^2
    = |B_I|
    \geq (1-\eta_1) |I|
    \geq (1-\eta_1) 2^{-n},
    \qquad I\in\mathcal{D}^n.
  \end{equation}
\end{subequations}

Since $(\mathcal{B}_I : I\in \mathcal{D}^n)$ satisfies Jones' compatibility
conditions~\textrefp[J]{enu:j1}--\textrefp[J]{enu:j4} with $\kappa_J=(1-\eta_1)^{-1}$,
Remark~\ref{rem:projection} and Theorem~\ref{thm:projection} imply that the operators
$B^{(\varepsilon)} : SL^\infty_n\to SL^\infty_N$ and
$Q^{(\varepsilon)} : SL^\infty_N\to SL^\infty_n$ given by
\begin{equation}\label{eq:B+Q:operators}
  B^{(\varepsilon)} f
  = \sum_{i=1}^{2^{n+1}-1} \frac{\langle f, h_i\rangle}{\|h_i\|_2^2} b_i^{(\varepsilon)}
  \qquad\text{and}\qquad
  Q^{(\varepsilon)} g
  = \sum_{i=1}^{2^{n+1}-1}
  \frac{\langle g, b_i^{(\varepsilon)}\rangle}{\|b_i^{(\varepsilon)}\|_2^2} h_i
\end{equation}
satisfy the estimates
\begin{equation}\label{eq:B+Q:estimates}
  \|B^{(\varepsilon)} f \|_{SL^\infty} \leq \|f\|_{SL^\infty}
  \quad\text{and}\quad
  \|Q^{(\varepsilon)} g \|_{SL^\infty} \leq (1-\eta_1)^{-1/2}\|g\|_{SL^\infty},
\end{equation}
$f\in SL^\infty_n$, $g\in SL^\infty_N$.  By~\eqref{eq:B+Q:operators}, the operator
$P^{(\varepsilon)} : SL^\infty_N\to SL^\infty_N$ defined as
$f\mapsto B^{(\varepsilon)}Q^{(\varepsilon)} f$ has the form
\begin{equation}\label{eq:P:definition}
  P^{(\varepsilon)} f
  = \sum_{i=1}^{2^{n+1}-1}
  \frac{\langle f, b_i^{(\varepsilon)}\rangle}{\|b_i^{(\varepsilon)}\|_2^2} b_i^{(\varepsilon)},
  \qquad f\in SL^\infty_N.
\end{equation}
Therefore, $P^{(\varepsilon)}$ is an orthogonal projection, which, by~\eqref{eq:B+Q:estimates},
satisfies the estimate
\begin{equation}\label{eq:P:estimate}
  \|P^{(\varepsilon)} f\|_{SL^\infty}
  \leq (1-\eta_1)^{-1/2}\|f\|_{SL^\infty},
  \qquad f\in SL^\infty_N.
\end{equation}

Let $Y$ denote the subspace of $SL^\infty_N$ given by
\begin{equation*}
  Y = \Big\{g = \sum_{i=1}^{2^{n+1}-1} a_i b_i^{(\varepsilon)} :
  a_i\in \mathbb{R}\Big\},
\end{equation*}
and note that $Y$ is the image of the projection $P^{(\varepsilon)}$.
By~\eqref{rem:projection:identity:signs} and~\eqref{eq:B+Q:estimates},
$B^{(\varepsilon)} : SL^\infty_n\to Y$ is an isomorphism; thus we obtain the following commutative
diagram:
\begin{equation}\label{eq:commutative-diagram:preimage}
  \vcxymatrix{SL^\infty_n \ar[r]^{\Id_{SL^\infty_n}} \ar[d]_{B^{(\varepsilon)}} & SL^\infty_n\\
    Y \ar[r]_{\Id_Y} & Y \ar[u]_{{B^{(\varepsilon)}}^{-1}}},
\end{equation}
where $\|B^{(\varepsilon)}\| = 1$ and $\|{B^{(\varepsilon)}}^{-1}\| \leq (1-\eta_1)^{-1/2}$.  Now,
define $U : SL^\infty_N\to Y$ by
\begin{equation}\label{eq:almost-inverse}
  U f = \sum_{i=1}^{2^{n+1}-1}
  \frac{\langle f, b_i^{(\varepsilon)}\rangle}
  {\langle Tb_i^{(\varepsilon)}, b_i^{(\varepsilon)}\rangle}
  b_i^{(\varepsilon)},
\end{equation}
and note that by~\eqref{eq:proof:local-factor:ii:b}, the $1$-unconditionality of the Haar system in
$SL^\infty$ and~\eqref{eq:P:estimate}
\begin{equation}\label{eq:almost-inverse:bounded}
  \|U : SL^\infty_N\to Y\|
  \leq \frac{1}{\delta (1-\eta_1)^{1/2}}.
\end{equation}
Observe that for all $g = \sum_{i=1}^{2^{n+1}-1} a_i b_i^{(\varepsilon)} \in Y$, we have the
following identity:
\begin{equation}\label{eq:crucial-identity}
  UTg - g
  = \sum_{i=1}^{2^{n+1}-1}
  \sum_{j=1}^{i-1} a_j
  \frac{\langle T b_j^{(\varepsilon)}, b_i^{(\varepsilon)}\rangle}
  {\langle Tb_i^{(\varepsilon)}, b_i^{(\varepsilon)}\rangle}
  b_i^{(\varepsilon)}
  + \sum_{i=1}^{2^{n+1}-1} \sum_{j=1}^{i-1} a_i
  \frac{\big\langle b_i^{(\varepsilon)}, T^* b_j^{(\varepsilon)}\big\rangle}
  {\langle Tb_j^{(\varepsilon)}, b_j^{(\varepsilon)}\rangle}
  b_j^{(\varepsilon)}.
\end{equation}
Using the estimates~\eqref{eq:proof:local-factor:ii} and $|a_j| \leq \|g\|_{SL^\infty}$, together
with the above identity~\eqref{eq:crucial-identity} yields
\begin{equation}\label{eq:crucial-inequality}
  \|UTg - g\|_{SL^\infty}
  \leq \frac{\eta_1}{\delta} \Big(1+\frac{2^n}{1-\eta_1}\Big)\|g\|_{SL^\infty}.
\end{equation}

Finally, let $J : Y\to SL^\infty_N$ denote the operator given by $Jy = y$.  By our choice of
$\eta_1$ in~\eqref{eq:proof:local-factor:eta_1}, the operator $V : SL^\infty_N\to Y$ given by
$V=(UTJ)^{-1}U$ is well defined.  With these definitions, the following diagram is commutative:
\begin{equation}\label{eq:commutative-diagram:image}
  \vcxymatrix{
    Y \ar[rr]^{\Id_Y} \ar[dd]_J \ar[rd]_{UTJ} & & Y\\
    & Y \ar[ru]^{(UTJ)^{-1}} &\\
    SL^\infty_N \ar[rr]_T & & SL^\infty_N \ar[lu]_U \ar[uu]_V
  }
  \qquad \|J\|\|V\| \leq (1+\eta)/\delta.
\end{equation}
Merging the diagrams~\eqref{eq:commutative-diagram:preimage}
and~\eqref{eq:commutative-diagram:image} concludes the proof.\qedhere

\subsection{Preparation for the proof of Theorem~\ref{thm:local-primary}}\label{subsec:local-primary-prep}\hfill

\noindent
Before we come to the proof of Theorem~\ref{thm:local-primary}, we develop some notation and introduce
another combinatorial lemma (see~Lemma~\ref{lem:comb-2}).

Let $\mathcal{N}$ be a nested collection of sets with finite and positive measure.  Given
$\mathcal{X}\subset\mathcal{N}$, we define the \emph{founding generation}
$\mathcal{G}_0(\mathcal{X})$ of $\mathcal{X}$ by
\begin{equation}\label{eq:generation:0}
  \mathcal{G}_0(\mathcal{X})
  = \{ N\in \mathcal{X} : \text{there is no $M\in\mathcal{X}$ with $M\supsetneq N$}\}.
\end{equation}
We now inductively define the \emph{$k^{\text{th}}$ generation} $\mathcal{G}_k(\mathcal{X})$ of
$\mathcal{X}$.  Assuming that $\mathcal{G}_0(\mathcal{X}),\ldots,\mathcal{G}_{k-1}(\mathcal{X})$
have already been defined, we put
\begin{equation}\label{eq:generation:1}
  \mathcal{G}_k(\mathcal{X})
  = \mathcal{G}_0\Bigl(\mathcal{X}\setminus \bigl(
  \mathcal{G}_0(\mathcal{X})\cup\cdots\cup\mathcal{G}_{k-1}(\mathcal{X})\bigr)
  \Bigr),
  \qquad k\in\mathbb{N}.
\end{equation}
We denote the point-set of $\mathcal{G}_k(\mathcal{X})$ by $G_k(\mathcal{X})$, i.e.
\begin{equation}\label{eq:generation:2}
  G_k(\mathcal{X})
  = \bigcup \mathcal{G}_k(\mathcal{X}).
\end{equation}
Clearly, $G_k(\mathcal{X})\subset G_{k-1}(\mathcal{X})$, $k\in\mathbb{N}$.
The \emph{Carleson constant} $\cc{\mathcal{X}}$ of $\mathcal{X}$ is defined as
\begin{equation}\label{eq:carleson-constant}
  \cc{\mathcal{X}}
  = \sup_{N\in\mathcal{X}}\frac{1}{|N|}\sum_{\substack{M\in\mathcal{X}\\M\subset N}} |M|.
\end{equation}

The dyadic version of Lemma~\ref{lem:comb-2} (including a proof) can be found
in~\cite[Lemma~3.1.4]{mueller:2005}.
\begin{lem}\label{lem:comb-2}
  Let $\mathcal{N}$ be a collection of measurable, nested sets with finite and positive measure,
  $k\in\mathbb{N}$ and $\rho > 0$.  If $\mathcal{X}\subset\mathcal{N}$ is a collection with
  $\cc{\mathcal{X}} > \frac{k}{\rho}$, there exists a set $N_0\in\mathcal{N}$ such that
  \begin{equation*}
    \big|G_\ell\big(\{N\in\mathcal{X} : N\subset N_0\}\big)\big|
    > (1-\rho) |N_0|,
    \qquad 0\leq \ell \leq k.
  \end{equation*}
\end{lem}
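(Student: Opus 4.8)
The plan is to prove Lemma~\ref{lem:comb-2} by a stopping-time (iterated founding-generation) argument, exactly in the spirit of the classical Carleson-constant lemma in~\cite[Lemma~3.1.4]{mueller:2005}. The heuristic is that if the Carleson constant of $\mathcal{X}$ is large, then some set $N_0\in\mathcal{N}$ carries very heavy mass from $\mathcal{X}$, heavy enough that even after peeling off $k$ generations we have lost less than a $\rho$-fraction of $|N_0|$.

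First I would set up the right quantity to track. For $N\in\mathcal{N}$ with $N$ containing some member of $\mathcal{X}$, let $\mathcal{X}_N = \{N'\in\mathcal{X} : N'\subset N\}$ and consider the ``relative Carleson mass'' $c(N) = \frac{1}{|N|}\sum_{M\in\mathcal{X}_N}|M|$, so that $\cc{\mathcal{X}} = \sup_N c(N)$. Fix $N_0$ with $c(N_0) > k/\rho$ (such $N_0$ exists by hypothesis, up to an arbitrarily small loss which I can absorb by choosing $N_0$ with $c(N_0)$ close enough to the supremum). The key observation is a one-step mass estimate: writing $\mathcal{G}_0 = \mathcal{G}_0(\mathcal{X}_{N_0})$ for the maximal elements of $\mathcal{X}_{N_0}$, these are pairwise disjoint and every $M\in\mathcal{X}_{N_0}$ sits inside exactly one of them, so
\begin{equation*}
  \sum_{M\in\mathcal{X}_{N_0}}|M|
  = \sum_{N\in\mathcal{G}_0} \sum_{M\in\mathcal{X}_N}|M|
  = \sum_{N\in\mathcal{G}_0} c(N)|N|
  \leq \cc{\mathcal{X}}\, |G_0(\mathcal{X}_{N_0})|.
\end{equation*}
Hence $|G_0(\mathcal{X}_{N_0})| \geq c(N_0)|N_0|/\cc{\mathcal{X}}$; but I want the reverse direction, so instead I track how much mass is \emph{removed} when passing from one generation to the next.

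The core of the argument is the recursion on generations. Let $\mathcal{Y}_0 = \mathcal{X}_{N_0}$ and $\mathcal{Y}_{\ell+1} = \mathcal{Y}_\ell\setminus\mathcal{G}_0(\mathcal{Y}_\ell)$, so that $\mathcal{G}_\ell(\mathcal{X}_{N_0}) = \mathcal{G}_0(\mathcal{Y}_\ell)$ and $G_\ell(\mathcal{X}_{N_0}) = \bigcup\mathcal{G}_0(\mathcal{Y}_\ell)$. One shows that the total $\mathcal{X}$-mass inside $G_{\ell}$ but outside $G_{\ell+1}$ is at least $|G_\ell| - |G_{\ell+1}|$ (each such point lies in a generation-$\ell$ set, and in fact one gets the sharper statement that the Carleson mass drops by a controlled amount). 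The cleanest route: show $\cc{\mathcal{X}}\cdot|G_{\ell+1}(\mathcal{X}_{N_0})| \geq \sum_{M\in\mathcal{Y}_{\ell+1}}|M| = \sum_{M\in\mathcal{Y}_\ell}|M| - |G_\ell(\mathcal{X}_{N_0})|$ by the same disjointness-and-regrouping identity applied to $\mathcal{Y}_{\ell+1}$, together with the fact that $\mathcal{G}_0(\mathcal{Y}_\ell)$ is a disjoint cover-by-tops of $\mathcal{Y}_\ell$ with $\sum_{N\in\mathcal{G}_0(\mathcal{Y}_\ell)}|N| = |G_\ell|$. Iterating, and using $\sum_{M\in\mathcal{Y}_0}|M| = c(N_0)|N_0| > (k/\rho)|N_0|$, a telescoping/induction on $\ell$ from $0$ to $k$ gives $|G_\ell(\mathcal{X}_{N_0})| > (1-\rho)|N_0|$ for every $0\leq \ell\leq k$; the point is that each generation can cost at most $\rho|N_0|/k$ in measure before the remaining Carleson mass would have to go negative. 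Since $G_\ell(\mathcal{X}_{N_0}) = G_\ell(\{N\in\mathcal{X} : N\subset N_0\})$, this is exactly the claimed conclusion.

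The main obstacle I anticipate is getting the bookkeeping of the recursion tight enough to land the constant $(1-\rho)$ with the hypothesis $\cc{\mathcal{X}} > k/\rho$ (rather than some weaker $Ck/\rho$), and in particular being careful that the regrouping identity $\sum_{M\in\mathcal{Y}_\ell}|M| = |G_\ell| + \sum_{M\in\mathcal{Y}_{\ell+1}}|M|$ really does hold exactly — i.e. that every non-maximal element of $\mathcal{Y}_\ell$ is strictly contained in a unique maximal one and no mass is double-counted or lost at boundaries (this uses nestedness of $\mathcal{N}$ and that all sets have positive measure). Once that identity is in hand the induction is routine: assuming $|G_{\ell-1}| > (1-\tfrac{(\ell-1)\rho}{k})|N_0|$ one pushes the estimate one step further, and at $\ell = k$ one reaches $(1-\rho)|N_0|$. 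I would also remark that the supremum in $\cc{\mathcal{X}}$ may not be attained, so $N_0$ should be chosen with $c(N_0)$ within, say, a factor $(1+\varepsilon)$ of $\cc{\mathcal{X}}$ for suitable small $\varepsilon$, or one simply works with a strict inequality throughout and notes the finitely-many-generations argument only needs $c(N_0) > k/\rho$, which is available directly by definition of supremum.
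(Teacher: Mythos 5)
Your plan is a genuinely different route from the paper's: the paper argues by contradiction, assuming that \emph{every} $N_0\in\mathcal{N}$ has some generation $\ell\leq k$ with $|G_\ell|\leq(1-\rho)|N_0|$, iterating this to get the geometric decay $|G_{mk+\ell}(\mathcal{X}_0)|\leq(1-\rho)^m|N_0|$, and summing over all generations to conclude $\sum_{N\in\mathcal{X}_0}|N|\leq\frac{k}{\rho}|N_0|$ for every $N_0$, i.e. $\cc{\mathcal{X}}\leq\frac{k}{\rho}$, a contradiction. Your direct ``extremal $N_0$'' argument can be made to work, but not in the form you describe, and the failure is precisely at the point you wave off in your last sentence.

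The concrete gap: the claim that ``the finitely-many-generations argument only needs $c(N_0)>k/\rho$'' is false, and with it the induction step ``each generation can cost at most $\rho|N_0|/k$''. The lemma is an existence statement about a \emph{well-chosen} $N_0$; an arbitrary $N_0$ with large relative Carleson mass need not work, because all that mass may sit on a tiny portion of $N_0$. For instance, with $k=1$, $\rho=1/2$, let $N_0=[0,1)$ and let $\mathcal{X}$ consist of $[0,1)$ together with all dyadic intervals $I\subset[0,2^{-10})$ with $|I|\geq 2^{-10-m}$: then $c(N_0)=1+(m+1)2^{-10}$ is as large as you like, yet $G_1(\{N\in\mathcal{X}:N\subset N_0\})=[0,2^{-10})$ has measure $2^{-10}\ll 1-\rho$. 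So nothing in your displayed inequalities bounds the per-generation loss by $\rho|N_0|/k$; that would need $c(N_0)$ comparable to $\cc{\mathcal{X}}$, i.e. your first option (near-supremum choice) is not optional but essential. With that choice the argument does close, and in fact no step-by-step induction is needed: your regrouping inequality gives $|G_k(\mathcal{X}_{N_0})|\geq S_k/\cc{\mathcal{X}}\geq(c(N_0)-k)|N_0|/\cc{\mathcal{X}}$, and all $\ell\leq k$ follow from $G_\ell\supset G_k$. But you then still need (i) some slack bookkeeping to turn the limiting value $1-\rho$ into the strict bound $>(1-\rho)|N_0|$ (e.g. pick $\rho'<\rho$ with $\cc{\mathcal{X}}>k/\rho'$ and take $\varepsilon$ small in terms of $\rho-\rho'$), and (ii) a separate treatment of $\cc{\mathcal{X}}=\infty$, where ``within a factor $1+\varepsilon$ of the supremum'' is meaningless and dividing by $\cc{\mathcal{X}}$ yields nothing. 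The paper's contradiction argument buys exactly these conveniences: no extremal $N_0$ has to be exhibited, the infinite-Carleson-constant case is automatic, and the strict inequality comes out for free.
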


\begin{proof}
  Although the proof is completely analogous the dyadic case, we give it here for sake of
  completeness.  Let $\mathcal{N}$ be a collection of nested sets with finite and positive measure,
  $k\in\mathbb{N}$ and $0 < \rho < 1$.  Suppose that the lemma fails, i.e.  there exists an
  $\ell\in\mathbb{N}_0$ with $0\leq \ell \leq k$ such that
  \begin{equation*}
    \big|G_\ell\big(\{N\in\mathcal{X} : N\subset N_0\}\big)\big|
    \leq (1-\rho) |N_0|,
    \qquad N_0\in\mathcal{N}.
  \end{equation*}
  Certainly, the above inequality implies that
  \begin{equation*}
    \big|G_k\big(\{N\in\mathcal{X} : N\subset N_0\}\big)\big|
    \leq (1-\rho) |N_0|,
    \qquad N_0\in\mathcal{N}.
  \end{equation*}

  Now, let $N_0\in\mathcal{N}$ be fixed, and put $\mathcal{X}_0=\{N\in\mathcal{X} : N\subset N_0\}$.
  By iterating the above inequality, we obtain
  \begin{equation*}
    \big|G_{mk+\ell}(\mathcal{X}_0)\big|
    \leq (1-\rho)^m |N_0|,
    \qquad m\in\mathbb{N}_0,\ 0\leq \ell \leq k-1.
  \end{equation*}
  Summing these estimates yields
  \begin{equation*}
    \sum_{N\in\mathcal{X}_0} |N|
    = \sum_{\ell=0}^{k-1}\sum_{m\in\mathbb{N}_0}\big|G_{mk+\ell}(\mathcal{X}_0)\big|
    \leq \sum_{\ell=0}^{k-1}\sum_{m\in\mathbb{N}_0} (1-\rho)^m |N_0|
    \leq \frac{k}{\rho} |N_0|.
  \end{equation*}
  Since $N_0\in\mathcal{X}$ was arbitrary, we obtain from the above inequality that
  $\cc{\mathcal{X}}\leq \frac{k}{\rho}$, which contradicts our hypothesis.
\end{proof}

The next Lemma is a monochromatic (and simplified) version of Jones' argument~\cite{jones:1985}.
\begin{lem}\label{lem:comb-3}
  Let $n\in\mathbb{N}_0$ and $\alpha,\beta\in\mathbb{R}$ be such that
  \begin{equation*}
    0 < \beta < 1
    \qquad\text{and}\qquad
    0 < \alpha < 2^{-n-1}\beta^{n+1}.
  \end{equation*}
Let $\mathcal{X}$ denote a collection of measurable nested
  sets with finite and positive measure which satisfies
  \begin{equation*}
    |G_n(\mathcal{X})| > (1 - \alpha)|G_0(\mathcal{X})|.
  \end{equation*}
  Then there exists a subcollection $\mathcal{Y}$ of $\mathcal{X}$ such that
  \begin{subequations}\label{eq:lem:comb-3}
    \begin{equation}\label{eq:lem:comb-3:a}
      |G_n(\mathcal{Y})| > \bigl( 1 - \alpha\frac{2^{n+1}}{\beta^{n+1}} \bigr) |G_0(\mathcal{X})|,
    \end{equation}
    and
    \begin{equation}\label{eq:lem:comb-3:b}
      |N\cap G_n(\mathcal{Y})|
      \geq (1 - \beta)|N|,
      \qquad N\in\mathcal{Y}.
    \end{equation}
  \end{subequations}
\end{lem}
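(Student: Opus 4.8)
The plan is to reduce to the case where $\mathcal{X}$ has a single founding set, to prove that case by induction on $n$, and then to recover the general statement by averaging over $\mathcal{G}_0(\mathcal{X})$. For the single‑rooted case, I will show: if $\mathcal{G}_0(\mathcal{X})=\{R\}$ and $|G_n(\mathcal{X})|>(1-\alpha)|R|$, then one can choose $\mathcal{Y}\subseteq\mathcal{X}$ with $R\in\mathcal{Y}$, satisfying~\eqref{eq:lem:comb-3:b}, and with $|G_n(\mathcal{Y})|>(1-c_n\alpha)|R|$, where $c_0=0$ and $c_n=1+\tfrac{1+\beta}{\beta}\,c_{n-1}$; explicitly $c_n=\beta\bigl((\tfrac{1+\beta}{\beta})^{n}-1\bigr)<2^{n+1}/\beta^{n}$, so the threshold $a_n:=\beta/c_n$ satisfies $a_n>2^{-n-1}\beta^{n+1}>\alpha$, which is exactly what the induction will consume. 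For $n=0$ take $\mathcal{Y}=\{R\}$; for $n=1$ take $\mathcal{Y}=\{R\}\cup\mathcal{G}_1(\mathcal{X})$, valid since $|G_1(\mathcal{X})|>(1-\alpha)|R|\ge(1-\beta)|R|$. For the inductive step I list $\mathcal{G}_1(\mathcal{X})=\{N_1,N_2,\dots\}$ (pairwise disjoint, contained in $R$), put $\mathcal{X}_i=\{M\in\mathcal{X}:M\subseteq N_i\}$ and $\alpha_i=1-|G_{n-1}(\mathcal{X}_i)|/|N_i|$; since an $\mathcal{X}$-set properly inside $R$ has $\mathcal{X}$-depth one larger than its depth in the subtree $\mathcal{X}_i$ containing it, $G_n(\mathcal{X})$ is the disjoint union of the $G_{n-1}(\mathcal{X}_i)$, so $\sum_i\alpha_i|N_i|=\sum_i|N_i|-|G_n(\mathcal{X})|<\alpha|R|$. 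Calling $N_i$ \emph{good} if $\alpha_i<a_{n-1}$, the bad ones obey $\sum_{i\ \text{bad}}|N_i|<\alpha|R|/a_{n-1}$. I apply the inductive hypothesis to $\mathcal{X}_i$ for each good $i$, getting $\mathcal{Y}_i$, and set $\mathcal{Y}=\{R\}\cup\bigcup_{i\ \text{good}}\mathcal{Y}_i$.

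This $\mathcal{Y}$ is closed under taking $\mathcal{X}$-supersets, hence the generations of $\mathcal{Y}$ coincide with those of $\mathcal{X}$ on $\mathcal{Y}$ and $G_n(\mathcal{Y})$ is the disjoint union of the $G_{n-1}(\mathcal{Y}_i)$ over good $i$. Now~\eqref{eq:lem:comb-3:b} holds for sets inside the $\mathcal{Y}_i$ by the inductive hypothesis; it holds for each $N_i$ because $|G_{n-1}(\mathcal{Y}_i)|>(1-c_{n-1}\alpha_i)|N_i|\ge(1-\beta)|N_i|$, using $c_{n-1}\alpha_i<c_{n-1}a_{n-1}=\beta$; and it holds for $R$ because $|G_n(\mathcal{Y})|=\sum_{i\ \text{good}}|G_{n-1}(\mathcal{Y}_i)|>\sum_{i\ \text{good}}|N_i|-c_{n-1}\sum_i\alpha_i|N_i|>\bigl(1-\alpha-\tfrac{\alpha}{a_{n-1}}-c_{n-1}\alpha\bigr)|R|=(1-c_n\alpha)|R|\ge(1-\beta)|R|$. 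The same inequality records $|G_n(\mathcal{Y})|>(1-c_n\alpha)|R|$, closing the induction.

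For the general statement, set $\alpha_R=1-|R\cap G_n(\mathcal{X})|/|R|$ for $R\in\mathcal{G}_0(\mathcal{X})$, so $\sum_R\alpha_R|R|=|G_0(\mathcal{X})|-|G_n(\mathcal{X})|<\alpha|G_0(\mathcal{X})|$; discard each founding set with $\alpha_R\ge a_n$ (their total measure is $<\alpha|G_0(\mathcal{X})|/a_n$), apply the single‑rooted case to $\{M\in\mathcal{X}:M\subseteq R\}$ for every remaining $R$, and take $\mathcal{Y}$ to be the union of the resulting subcollections. Then~\eqref{eq:lem:comb-3:b} is immediate, while $|G_n(\mathcal{Y})|>\bigl(1-\tfrac{\alpha}{a_n}-c_n\alpha\bigr)|G_0(\mathcal{X})|$ and $\tfrac{1}{a_n}+c_n=\tfrac{1+\beta}{\beta}c_n=\tfrac{(1+\beta)^{n+1}}{\beta^{n}}-(1+\beta)<\tfrac{2^{n+1}}{\beta^{n+1}}$ (since $1+\beta\le 2$ and $\beta\le 1$), which yields~\eqref{eq:lem:comb-3:a}; as that coefficient is $<1$ by hypothesis, the discarded roots cannot exhaust $\mathcal{G}_0(\mathcal{X})$, so $\mathcal{Y}\neq\emptyset$.

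The main obstacle is keeping the defect \emph{linear} in $\alpha$ through all $n$ levels of the recursion: a careless good/bad split loses a factor of order $\sqrt{\alpha}$ per level, whereas fixing the good threshold to be precisely $a_{n-1}$ balances the term $\alpha/a_{n-1}$ (measure of discarded children) against the term $c_{n-1}\alpha$ (accumulated loss inside the kept children), so that the loss constants obey the linear recursion $c_n=1+\tfrac{1+\beta}{\beta}c_{n-1}$ whose closed form stays below $2^{n+1}/\beta^{n}$. The secondary point requiring care is the bookkeeping of depths under the two pruning steps — deleting founding sets and passing to the subtrees below $\mathcal{G}_1(\mathcal{X})$ — which must be arranged so that $\mathcal{Y}$ remains closed under $\mathcal{X}$-supersets, for otherwise the generations of $\mathcal{Y}$ shift and $G_n(\mathcal{Y})$ is no longer controlled by the subtree estimates.
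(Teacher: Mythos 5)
Your proof is correct (granted the finiteness of the collection, which the paper's own proof also uses implicitly, and after restating your single‑rooted claim with non‑strict inequalities so that it can be applied with the exact defects $\alpha_i$, resp.\ $\alpha_R$, including defect $0$; the strict inequality in \eqref{eq:lem:comb-3:a} is then recovered at the very end from $\tfrac{1+\beta}{\beta}c_n<2^{n+1}/\beta^{n+1}$ together with $\alpha>0$), but it takes a genuinely different route from the paper. The paper argues top‑down and globally, in the spirit of Jones: it sets $F_0=G_n(\mathcal{X})$ and, descending one generation at a time, keeps exactly those $N\in\mathcal{G}_{n-j}(\mathcal{X})$ with $|N\cap F_0\cap\dots\cap F_{j-1}|\geq(1-\beta)|N|$, proves by a one‑line induction the bound $|F_j|\geq\bigl(1-\alpha\,2^j/\beta^j\bigr)|G_0(\mathcal{X})|$, and finally takes $\mathcal{Y}$ to be the kept sets meeting $F_0\cap\dots\cap F_n$, identifying $G_n(\mathcal{Y})=F_0\cap\dots\cap F_n$; property \eqref{eq:lem:comb-3:b} is built into the selection rule, and no reduction to a single root or explicit parent--child structure is needed. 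You argue bottom‑up and locally: reduce to one root, split along $\mathcal{G}_1$, discard children whose local defect exceeds the threshold $a_{n-1}=\beta/c_{n-1}$ by a Markov‑type estimate, recurse, and track the linear recursion $c_n=1+\tfrac{1+\beta}{\beta}c_{n-1}$. The paper's route buys lighter bookkeeping (all roots handled at once, no need to check that the selected family is closed under supersets so that generations do not shift); your route buys a slightly sharper constant, $\tfrac{(1+\beta)^{n+1}}{\beta^{n}}-(1+\beta)$ in place of $2^{n+1}/\beta^{n+1}$, and makes transparent where the loss occurs (discarded children versus recursive loss), which is exactly the balance your threshold is tuned to. The two invariants you flag at the end --- root membership and closure of each $\mathcal{Y}_i$ under $\mathcal{X}_i$‑supersets, which justify that the generations of $\mathcal{Y}$ agree with those of $\mathcal{X}$ and hence the disjoint decomposition of $G_n(\mathcal{Y})$ into the pieces $G_{n-1}(\mathcal{Y}_i)$ --- should be carried explicitly as part of the induction statement, but with that done the argument closes.
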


\begin{proof}
  Put $\mathcal{F}_0 = \mathcal{G}_n(\mathcal{X})$, $F_0 = \bigcup \mathcal{F}_0$ and note that by
  our hypothesis we have the estimate
  \begin{equation}\label{eq:proof:lem:comb-3:1}
    |F_0| > (1-\alpha)|G_0(\mathcal{X})|.
  \end{equation}
  Let $j_0\in\mathbb{N}$ and assume that we already defined $\mathcal{F}_0,\ldots,\mathcal{F}_{j_0-1}$
  and $F_0,\ldots,F_{j_0-1}$.  To conclude the inductive step, we simply put
  \begin{equation}\label{eq:proof:lem:comb-3:2}
    \mathcal{F}_{j_0}
    = \{ N\in\mathcal{G}_{n-j_0}(\mathcal{X}) : |N\cap F_0\cap\dots\cap F_{j_0-1}|\geq (1-\beta)|N| \},
    \qquad F_{j_0} = \bigcup \mathcal{F}_{j_0}.
  \end{equation}
  Stopping the induction process after $n+1$ steps yields collections of pairwise disjoint sets
  $\mathcal{F}_j$, $0\leq j\leq n$, and $F_j = \bigcup \mathcal{F}_j$, $0\leq j\leq n$.

  Let $1\leq j\leq n$ be fixed.  We define
  \begin{equation}\label{eq:proof:lem:comb-3:3}
    \mathcal{E}_j
    = \mathcal{G}_{n-j}(\mathcal{X})\setminus \mathcal{F}_j,
    \qquad E_j = \bigcup \mathcal{E}_j,
  \end{equation}
  and note that by~\eqref{eq:proof:lem:comb-3:2} and~\eqref{eq:proof:lem:comb-3:3} we have
  $|N\cap F_0\cap\dots\cap F_{j-1}| < (1-\beta)|N|$, for all $N\in\mathcal{E}_j$.  Now, observe that
  \begin{equation*}
    |N|
    = |N\cap F_0\cap\dots\cap F_{j-1}| + |N\setminus (F_0\cap\dots\cap F_{j-1})|
    < (1-\beta)|N| + |N\setminus (F_0\cap\dots\cap F_{j-1})|,
  \end{equation*}
  for all $N\in\mathcal{E}_j$.  Summing the above estimate over all $N\in\mathcal{E}_j$ yields
  \begin{equation*}
    |E_j|
    \leq (1-\beta) |E_j| + |G_0(\mathcal{X})\setminus(F_0\cap\dots\cap F_{j-1})|.
  \end{equation*}
  Consequently, we obtain
  \begin{equation*}
    |E_j|
    \leq \frac{1}{\beta} |G_0(\mathcal{X})\setminus(F_0\cap\dots\cap F_{j-1})|.
  \end{equation*}
  Combining $(1-\alpha)|G_0(\mathcal{X})| \leq |G_{n-j}(\mathcal{X})| = |E_j| + |F_j|$ with the
  latter estimate gives us
  \begin{equation}\label{eq:proof:lem:comb-3:4}
    |F_j|
    \geq (1-\alpha)|G_0(\mathcal{X})|
    - \frac{1}{\beta} |G_0(\mathcal{X})\setminus(F_0\cap\dots\cap F_{j-1})|,
    \qquad 1\leq j\leq n.
  \end{equation}

  We claim that the following inequality holds:
  \begin{equation}\label{eq:proof:lem:comb-3:5}
    |F_j|
    \geq (1-\alpha\frac{2^j}{\beta^j})|G_0(\mathcal{X})|,
    \qquad 0\leq j\leq n.
  \end{equation}
  We will prove the inequality~\eqref{eq:proof:lem:comb-3:5} by induction on $j$.
  By~\eqref{eq:proof:lem:comb-3:1}, the inequality~\eqref{eq:proof:lem:comb-3:5} is true for $j=0$.
  Now let $0\leq j_0\leq n-1$ and assume we have already proved that~\eqref{eq:proof:lem:comb-3:5}
  holds for all $0\leq j\leq j_0$.  By~\eqref{eq:proof:lem:comb-3:4} and our induction
  hypothesis we obtain
  \begin{align*}
    |F_{j_0+1}|
    & \geq (1-\alpha)|G_0(\mathcal{X})|
      - \frac{1}{\beta} |G_0(\mathcal{X})\setminus(F_0\cap\dots\cap F_{j_0})|\\
    & \geq (1-\alpha)|G_0(\mathcal{X})|
      - \frac{1}{\beta} \sum_{j=0}^{j_0} |G_0(\mathcal{X})\setminus F_j|\\
    & \geq (1-\alpha)|G_0(\mathcal{X})|
      - \frac{1}{\beta} \sum_{j=0}^{j_0} \alpha\frac{2^j}{\beta^j} |G_0(\mathcal{X})|.
  \end{align*}
  Using $\alpha\leq \frac{\alpha}{\beta^{j_0+1}}$ to estimate the first term and
  $\frac{1}{\beta} \sum_{j=0}^{j_0} \alpha\frac{2^j}{\beta^j}\leq
  \alpha\frac{2^{j_0+1}-1}{\beta^{j_0+1}}$ for the second term yields~\eqref{eq:proof:lem:comb-3:5}
  for $j_0+1$.  Thus, we proved~\eqref{eq:proof:lem:comb-3:5}.

  By~\eqref{eq:proof:lem:comb-3:5}, we obtain
  \begin{equation}\label{eq:proof:lem:comb-3:6}
    |F_0\cap\dots\cap F_n|
    \geq \bigl( 1 - \alpha \sum_{j=0}^n \frac{2^j}{\beta^j}\bigr) |G_0(\mathcal{X})|
    \geq \bigl( 1 - \alpha\frac{2^{n+1}}{\beta^{n+1}} \bigr) |G_0(\mathcal{X})|.
  \end{equation}
  Finally, we define the subcollection $\mathcal{Y}$ of $\mathcal{X}$ by putting
  \begin{equation*}
    \mathcal{Y}
    = \{ N : N\in\mathcal{F}_j,\ N\cap F_0\cap\dots\cap F_n \neq \emptyset,\ 0\leq j\leq n\}.
  \end{equation*}
  We will now verify that $\mathcal{Y}$ satisfies~\eqref{eq:lem:comb-3}.  First, we will show that
  \begin{equation}\label{eq:proof:lem:comb-3:7}
    G_n(\mathcal{Y}) = F_0\cap\dots\cap F_n.
  \end{equation}
  To this end, let $x\in G_n(\mathcal{Y})$ and note that in this case, there exist sets
  $N_j\in\mathcal{Y}$, $0\leq j\leq n$, with $x\in N_0\subsetneq \dots \subsetneq N_n$.  Clearly,
  this is only possible if $N_j\in\mathcal{F}_j$, $0\leq j\leq n$.  Now, since $N_0\in\mathcal{Y}$,
  we have $N_0\cap F_j\neq \emptyset$, $0\leq j\leq n$, which implies $N_0\subset F_j$,
  $0\leq j\leq n$.  Consequently, $x\in N_0\subset F_0\cap \dots\cap F_n$.  If on the other hand
  $x\in F_0\cap \dots\cap F_n$, then we know that there exist $N_j\in\mathcal{F}_j$ with $N_j\ni x$,
  $0\leq j\leq n$.  Hence, $x\in N_0\subsetneq \dots\subsetneq N_n$ and
  $N_j\cap F_0\cap\dots\cap F_n\neq \emptyset$, $0\leq j\leq n$.  Therefore, we obtain
  $N_j\in\mathcal{Y}$ for all $0\leq j\leq n$, and that $N_0\in\mathcal{G}_n(\mathcal{Y})$, which
  shows that $x\in N_0\subset G_n(\mathcal{Y})$.

  Second, note that combining~\eqref{eq:proof:lem:comb-3:7} with~\eqref{eq:proof:lem:comb-3:6}
  yields~\eqref{eq:lem:comb-3:a}.

  Thirdly, let $N\in\mathcal{Y}$.  Thus, there exists an integer $j_0$ with $0\leq j_0\leq n$ such
  that $N\in\mathcal{F}_{j_0}$, and $N\cap F_0\cap\dots\cap F_n\neq\emptyset$.  Now, observe that
  whenever $j\geq j_0$ and $N\cap F_j\neq \emptyset$, then $N\subset F_j$.  Hence,
  $N\cap F_0\cap\dots\cap F_n = N\cap F_0\cap\dots\cap F_{j_0-1}$, and we
  obtain~\eqref{eq:lem:comb-3:b} by~\eqref{eq:proof:lem:comb-3:2} and~\eqref{eq:proof:lem:comb-3:7}.
\end{proof}

\subsection{Proof of Theorem~\ref{thm:local-primary}}\label{subsec:local-primary}\hfill

\noindent
The framework for the proof of Theorem~\ref{thm:local-primary} is similar to that
of~\cite[Theorem~2.2]{lechner:2016:factor-SL}, although here we certainly use quantitative, finite
dimensional techniques instead of qualitative, infinite dimensional techniques.

\begin{myproof}[Proof of Theorem~\ref{thm:local-primary}]
  Let $n\in\mathbb{N}_0$ and $\Gamma,\eta > 0$.  Let $\eta_1 = \eta_1(n,\eta)$ denote the largest
  positive constant satisfying the inequalities
  \begin{equation}\label{eq:proof:local-primary:eta_1}
    0 < \eta_1 \leq 1/2,
    \qquad
    \eta_14^{n+3}n \leq 1/2,
    \qquad
    \frac{1}{1 - \eta_14^{n+4}n} \leq 1 + \eta.
  \end{equation}
  Define the integers
  \begin{equation}\label{eq:proof:local-primary:dim}
    n_1=\Bigl\lfloor \Bigl(\frac{32n}{\eta_1}\Bigr)^{n+2}\Bigr\rfloor + 1,
    \qquad
    N=N(n_1,\Gamma,\eta_1).
  \end{equation}
  where $N(n_1,\Gamma,\eta_1)$ is the integer in Theorem~\ref{thm:quasi-diag} with the parameters $n_1$,
  $\Gamma$, $\eta_1$ and $\delta=0$.  Note that by the above definitions of $n_1$ and $\eta_1$, we
  have actually that $N$ depends only on $n$, $\Gamma$ and $\eta$, i.e.  $N=N(n,\Gamma,\eta)$.
  Finally, let $T : SL^\infty_N\to SL^\infty_N$ with $\|T\|\leq \Gamma$ be fixed throughout the rest
  of the proof.

  The proof will be divided into the following four steps.
  \begin{enumerate}[Step 1]
  \item By Theorem~\ref{thm:quasi-diag}, there exists a finite sequence of collections
    $(\mathcal{B}_K : K\in\mathcal{D}^{n_1})$ with $\mathcal{B}_K\subset \mathcal{D}^N$,
    $K\in\mathcal{D}^{n_1}$, satisfying Jones' compatibility
    conditions~\textrefp[J]{enu:j1}--\textrefp[J]{enu:j4}.  These collections generate the block
    basis $(b_K : K\in\mathcal{D}^{n_1})$, which simultaneously almost-diagonalizes the operators
    $T$ and $\Id_{SL^\infty_N}-T$.
    
  \item One of the two collections
    \begin{align*}
      \mathcal{M}
      & = \Big\{B_K : K\in\mathcal{D}^{n_1},\
        \langle T b_K, b_K\rangle\geq \frac{\|b_K\|_2^2}{2}\Big\},\\
      \mathcal{N}
      & = \Big\{B_K : K \in\mathcal{D}^{n_1},\
        \langle(\Id_{SL^\infty_N} - T) b_K, b_K\rangle
        \geq\frac{\|b_K\|_2^2}{2}\Big\},
    \end{align*}
    contains a finite sequence of subcollections $(\mathcal{C}_I : I\in \mathcal{D}^n)$ that
    satisfies Jones' compatibility conditions~\textrefp[J]{enu:j1}--\textrefp[J]{enu:j4}.
    
  \item Consequently, by the reiteration Theorem~\ref{thm:projection-iteration} and Theorem~\ref{thm:projection},
    the block basis $(\widetilde b_I : I\in\mathcal{D}^n)$ given by
    \begin{equation*}
      \widetilde b_I
      = \sum_{B_K\in\mathcal{C}_I} b_K,
      \qquad I\in\mathcal{D}^n,
    \end{equation*}
    spans a complemented copy of $SL^\infty_n$ (the constants for the norms of the isomorphism and
    the projection do not depend on $n$).  Moreover, the operators $T$ and $\Id_{SL^\infty_N}-T$ are
    both almost-diagonalized by $(\widetilde b_I : I\in\mathcal{D}^n)$, and either $T$ or
    $\Id_{SL^\infty_N}-T$ has large diagonal with respect to $(\widetilde b_I : I\in\mathcal{D}^n)$
    (depending on whether we selected $\mathcal{C}_I\subset \mathcal{M}$, $I\in\mathcal{D}^n$ or
    $\mathcal{C}_I\subset \mathcal{N}$, $I\in\mathcal{D}^n$, in the previous step).
  
  \item Finally, repeating the proof of Theorem~\ref{thm:local-factor} with $\widetilde b_I$ instead of
    $b_I$ yields the commutative diagram
    \begin{equation*}
      \vcxymatrix{SL^\infty_n \ar[r]^{\Id_{SL^\infty_n}} \ar[d]_R & SL^\infty_n\\
        SL^\infty_N \ar[r]_H & SL^\infty_N \ar[u]_S}
      \qquad \|R\|\|S\|\leq 2 + \eta,
    \end{equation*}
    where $H$ is either $T$ or $\Id_{SL^\infty_N} - T$.
  \end{enumerate}

  \begin{proofstep}\label{step:local-primary:1}\hfill\\
    By the definition of $N$ (see~\eqref{eq:proof:local-primary:dim}), the almost-diagonalization
    Theorem~\ref{thm:quasi-diag} (and Remark~\ref{rem:quasi-diag}~\eqref{enu:rem:quasi-diag:i}
    and~\eqref{enu:rem:quasi-diag:ii}) with parameters $n_1$, $\Gamma$, $\eta_1$ and $\delta=0$, we
    obtain a finite block basic sequence $(b_K : K\in\mathcal{D}^{n_1})$ given by
    \begin{equation}\label{eq:proof:thm:local-primary:block-basis:1}
      b_K
      = \sum_{Q\in\mathcal{B}_K} h_Q,
      \qquad K\in\mathcal{D}^{n_1},
    \end{equation}
    which has the following properties:
    \begin{enumerate}[(i)]
    \item\label{enu:proof:thm:local-primary:block-basis:i} $\mathcal{B}_K\subset \mathcal{D}^N$,
      $K\in\mathcal{D}^{n_1}$, and the finite sequence of collections
      $(\mathcal{B}_K : K\in \mathcal{D}^{n_1})$ satisfies Jones' compatibility
      conditions~\textrefp[J]{enu:j1}--\textrefp[J]{enu:j4} with constant
      $\kappa_J = (1-\eta_1)^{-1}$, and
      \begin{equation}\label{eq:proof:thm:local-primary:block-basis:i}
        (1-\eta_1) |K|
        \leq |B_K|
        \leq |K|,
        \qquad K\in\mathcal{D}^{n_1}.
      \end{equation}
    
    \item\label{enu:proof:thm:local-primary:block-basis:ii} For any $1\leq i \leq 2^{n_1+1}-1$ we
      have the estimate
      \begin{subequations}\label{eq:proof:thm:local-primary-ii}
        \begin{equation}\label{eq:proof:thm:local-primary-ii:a}
          \sum_{j=1}^{i-1} |\langle T b_j, b_i\rangle|
          + |\langle b_i, T^* b_j\rangle|
          \leq \eta_1\|b_i\|_2^2.
        \end{equation}
        Since $\langle b_j, b_i\rangle = 0$, $i\neq j$, we also have
        \begin{equation}\label{eq:proof:thm:local-primary-ii:b}
          \sum_{j=1}^{i-1} |\langle (\Id_{SL^\infty_N} - T) b_j, b_i\rangle|
          + |\langle b_i, (\Id_{SL^\infty_N} - T)^* b_j\rangle|
          \leq \eta_1\|b_i\|_2^2.
        \end{equation}
      \end{subequations}
    \end{enumerate}
  \end{proofstep}

  \begin{proofstep}\label{step:local-primary:2}\hfill\\
    We define the collections of measurable, nested sets with finite and positive measure
    \begin{align*}
      \mathcal{M}
      & = \Big\{B_K : K\in\mathcal{D}^{n_1},\
        \langle T b_K, b_K\rangle\geq \frac{\|b_K\|_2^2}{2}\Big\},\\
      \mathcal{N}
      & = \Big\{B_K : K \in\mathcal{D}^{n_1},\
        \langle(\Id_{SL^\infty_N} - T) b_K, b_K\rangle\geq
        \frac{\|b_K\|_2^2}{2}\Big\}.
    \end{align*}
    Note that by~\eqref{eq:proof:thm:local-primary:block-basis:i} we have
    $(1-\eta_1)n_1 \leq \cc{\mathcal{M}\cup \mathcal{N}}\leq \cc{\mathcal{M}} + \cc{\mathcal{N}}$
    (see~\eqref{eq:carleson-constant} for a definition of $\cc{\cdot}$).  If
    $\cc{\mathcal{M}}\geq (1-\eta_1)\frac{n_1}{2}$, we define $\mathcal{L}=\mathcal{M}$ and $H=T$;
    otherwise, we put $\mathcal{L}=\mathcal{N}$ and $H=\Id_{SL^\infty_N}-T$.
    We note the estimate
    \begin{equation}\label{eq:proof:thm:local-primary:large-coll-diag}
      \cc{\mathcal{L}}\geq (1-\eta_1)\frac{n_1}{2}
      \qquad\text{and}\qquad
      \langle H b_I, b_I\rangle\geq \frac{\|b_I\|_2^2}{2},
      \ I\in\mathcal{L}.
    \end{equation}
    By~\eqref{eq:proof:thm:local-primary:large-coll-diag} and~\eqref{eq:proof:local-primary:dim} we
    have that $\cc{\mathcal{L}}\geq(1-\eta_1)\frac{n_1}{2} > \frac{n}{(\eta_1/(32n))^{n+1}}$, thus
    Lemma~\ref{lem:comb-2} (with a parameter setting of $k=n$ and $\rho=(\frac{\eta_1}{32n})^{n+1}$)
    implies that there exists a set $B_0\in\mathcal{L}$ such that
    \begin{equation}\label{eq:proof:thm:local-primary:ind:0}
      \big|G_k\big(\{B\in\mathcal{\mathcal{L}} : B\subset B_0\}\big)\big|
      > \bigl(1-\bigl(\frac{\eta_1}{32n}\bigr)^{n+1}\bigr) |B_0|,
      \qquad 0\leq k \leq n.
    \end{equation}
    Put $\mathcal{L}_0 = \{B\in\mathcal{L} : B\subset B_0\}$. Since
    $\bigl(\frac{\eta_1}{32n}\bigr)^{n+1} < 2^{-n-1}\bigl(\frac{\eta_1}{8n}\bigr)^{n+1}$,
    Lemma~\ref{lem:comb-3} yields a subcollection $\mathcal{L}_1$ of $\mathcal{L}_0$ containing $B_0$
    such that
    \begin{equation}\label{eq:proof:thm:local-primary:density:0}
      | B\cap G_k(\mathcal{L}_1) |
      \geq \bigl(1 - \frac{\eta_1}{8n}\bigr)|B|,
      \qquad B\in\mathcal{L}_1,\ 0\leq k\leq n.
    \end{equation}

    We will now inductively define a finite sequence of collections
    $(\mathcal{C}_I : I\in\mathcal{D}^n)$ with $\mathcal{C}_I\subset\mathcal{L}_1$,
    $I\in\mathcal{D}^n$.  To begin, we simply put
    \begin{equation}\label{eq:proof:thm:local-primary:ind:1-top}
      \mathcal{C}_1
      = \mathcal{C}_{[0,1)}
      = \{B_0\}.
    \end{equation}
    Let us assume that we have already defined the collections
    $\mathcal{C}_1,\ldots,\mathcal{C}_{i_0-1}$.  We will now construct $C_{i_0}$.  To this end, let
    $I_0\in\mathcal{D}^n$ with $\mathcal{O}(I_0) = i_0$, and define
    \begin{equation*}
      C_{\widetilde I_0}^\ell
      = \bigcup \bigl\{ B_K^\ell : B_K\in \mathcal{C}_{\widetilde I_0} \bigr\}
      \qquad\text{and}\qquad
      C_{\widetilde I_0}^r
      = \bigcup \bigl\{ B_K^r : B_K\in \mathcal{C}_{\widetilde I_0} \bigr\},
    \end{equation*}
    where the sets $B_K^\ell$ and $B_K^r$ are given by
    \begin{equation*}
      B_K^\ell = \bigcup \bigl\{Q^\ell : Q \in \mathcal{B}_K\bigr\}
      \qquad\text{and}\qquad
      B_K^r = \bigcup \bigl\{Q^r : Q \in \mathcal{B}_K\bigr\}.
    \end{equation*}
    If $I_0$ is the \emph{left half} of $\widetilde I_0$ and $|I_0|=2^{-k_0}$, $k_0\leq n$, we put
    \begin{subequations}\label{eq:proof:thm:local-primary:ind:1}
      \begin{equation}\label{eq:proof:thm:local-primary:ind:1:a}
        \mathcal{C}_{I_0}
        = \{ B\in\mathcal{G}_{k_0}(\mathcal{L}_1) : B\subset C_{\widetilde I_0}^\ell \},
      \end{equation}
      and if $I_0$ is the \emph{right half} of $\widetilde I_0$ and $|I_0|=2^{-k_0}$, we define
      \begin{equation}\label{eq:proof:thm:local-primary:ind:1:b}
        \mathcal{C}_{I_0}
        = \{ B\in\mathcal{G}_{k_0}(\mathcal{L}_1) : B\subset C_{\widetilde I_0}^r \}.
      \end{equation}
    \end{subequations}
    We stop the induction after the construction of the collections $\mathcal{C}_I$,
    $I\in\mathcal{D}^n$.

    It is easily verified that $(\mathcal{C}_I : I\in\mathcal{D}^n)$
    satisfies~\textrefp[J]{enu:j1}--\textrefp[J]{enu:j3}.  We will now verify that
    $(\mathcal{C}_I : I\in\mathcal{D}^n)$ satisfies~\textrefp[J]{enu:j4} with constant
    $(1-\eta_1)^{-1}$.  First, note that by the inductive construction above
    (see~\eqref{eq:proof:thm:local-primary:ind:1-top} and~\eqref{eq:proof:thm:local-primary:ind:1}),
    we have $\mathcal{G}_n(\mathcal{L}_1) = \bigcup_{I\in\mathcal{D}_n}\mathcal{C}_I$, and
    thus,~\eqref{eq:proof:thm:local-primary:density:0} gives us
    \begin{equation}\label{eq:proof:thm:local-primary:density:1}
      \bigl| B\cap \bigcup_{I\in\mathcal{D}_n} C_I \bigr|
      \geq \bigl(1 - \frac{\eta_1}{8n}\bigr) |B|,
      \qquad B\in\bigcup_{I\in\mathcal{D}^n}\mathcal{C}_I.
    \end{equation}
    Second, let $I\in\mathcal{D}^n\setminus \{[0,1)\}$ and $B\in\mathcal{C}_{\widetilde I}$ be
    fixed (if $I=[0,1)$, there is nothing to show).  Choose $J\in\mathcal{D}^n\setminus\{[0,1)\}$
    such that $I\cup J = \widetilde I = \widetilde J$.  Hence,
    by~\eqref{eq:proof:thm:local-primary:density:1} and~\textrefp[J]{enu:j3} we obtain
    \begin{equation*}
      \bigl(1 - \frac{\eta_1}{8n}\bigr) |B|
      \leq | B\cap C_I | + | B\cap C_J |.
    \end{equation*}
    Considering~\eqref{eq:proof:thm:local-primary:ind:1} and that $|B^\ell|\leq |B|/2$,
    $|B^r|\leq |B|/2$, the above estimate yields
    \begin{equation*}
      \bigl(1 - \frac{\eta_1}{8n}\bigr) |B|
      \leq
      \begin{cases}
        | B^\ell\cap C_I | + |B|/2, & \text{if $I$ is the left half of $\widetilde I$},\\
        | B^r\cap C_I | + |B|/2, & \text{if $I$ is the right half of $\widetilde I$}.
      \end{cases}
    \end{equation*}
    Since either $B\cap C_I = B^\ell\cap C_I$ or $B\cap C_I = B^r\cap C_I$, the latter estimate
    gives us
    \begin{equation}\label{eq:proof:thm:local-primary:coll-prop:1}
      \frac{1}{2}\bigl(1-\frac{\eta_1}{4n}\bigr) |B|
      \leq |B\cap C_I|
      \leq \frac{1}{2} |B|,
      \qquad I\in\mathcal{D}^n\setminus\{[0,1)\},\ B\in\mathcal{C}_{\widetilde I}.
    \end{equation}
    The estimate on the right hand side is obvious from the principle of our construction
    (see~\eqref{eq:proof:thm:local-primary:ind:1-top} and~\eqref{eq:proof:thm:local-primary:ind:1}).
    Summing~\eqref{eq:proof:thm:local-primary:coll-prop:1} over all $B\in\mathcal{C}_{\widetilde I}$
    yields together with~\textrefp[J]{enu:j2} and~\textrefp[J]{enu:j3} that
    \begin{equation*}
      \frac{1}{2}\bigl(1-\frac{\eta_1}{4n}\bigr) |C_{\widetilde I}|
      \leq |C_I|
      \leq \frac{1}{2} |C_{\widetilde I}|,
      \qquad I\in\mathcal{D}^n, I\neq [0,1).
    \end{equation*}
    By iterating the latter inequality we obtain
    \begin{equation}\label{eq:proof:thm:local-primary:coll-prop:2}
      (1-\frac{\eta_1}{4}) |I|
      \leq \frac{|C_I|}{ |C_{[0,1)}|}
      \leq |I|,
      \qquad I\in\mathcal{D}^n.
    \end{equation}
    Let $I_0,I\in\mathcal{D}^n$ with $I_0\subset I$, $4|I_0|\leq |I|$, and $B\in\mathcal{C}_I$,
    then~\textrefp[J]{enu:j2} and~\textrefp[J]{enu:j3} imply
    \begin{equation}\label{eq:proof:thm:local-primary:coll-prop:2:iter:1}
      |B\cap C_{I_0}|
      = |B\cap C_{\widetilde I_0}\cap C_{I_0}|
      = \sum_{A\in \mathcal{C}_{\widetilde{I_0}}}\big|B\cap A\cap C_{I_0}\big|.
    \end{equation}
    Since $\widetilde I_0\subset I$, we know that $A\subset B$ whenever
    $A\in\mathcal{C}_{\widetilde I_0}$, $B\in\mathcal{C}_I$ and $B\cap A\neq \emptyset$.
    Hence,~\eqref{eq:proof:thm:local-primary:coll-prop:2:iter:1},
    \eqref{eq:proof:thm:local-primary:coll-prop:1} and~\textrefp[J]{enu:j2} yield
    \begin{equation}\label{eq:proof:thm:local-primary:coll-prop:2:iter:2}
      |B\cap C_{I_0}|
      = \sum_{\substack{A\in \mathcal{C}_{\widetilde{I_0}}\\A\subset B}}\big|A\cap C_{I_0}\big|
      \geq \frac{1}{2}\bigl(1-\frac{\eta_1}{4n}\bigr) |B\cap C_{\widetilde I_0}|.
    \end{equation}
    Iterating~\eqref{eq:proof:thm:local-primary:coll-prop:2:iter:1}
    and~\eqref{eq:proof:thm:local-primary:coll-prop:2:iter:2} while
    using~\eqref{eq:proof:thm:local-primary:coll-prop:1} in each of those iterations, we obtain
    \begin{equation*}
      |B\cap C_{I_0}|
      \geq \bigl(1-\frac{\eta_1}{4}\bigr) \frac{|I_0|}{|I|} |B\cap C_{[0,1)}|
      \qquad B\in\mathcal{C}_I.
    \end{equation*}
    Combining the latter estimate with~\eqref{eq:proof:thm:local-primary:coll-prop:2} and noting
    that $B\subset C_{[0,1)}$ yields
    \begin{equation}\label{eq:proof:thm:local-primary:coll-prop:3}
      |B\cap C_{I_0}|
      \geq \bigl(1-\frac{\eta_1}{2}\bigr) \frac{|C_{I_0}|}{|C_I|} |B|,
      \qquad B\in\mathcal{C}_I,
    \end{equation}
    whenever $I_0,I\in\mathcal{D}^n$ with $I_0\subset I$.
    By~\eqref{eq:proof:thm:local-primary:coll-prop:3}, the finite sequence
    $(\mathcal{C}_I : I\in\mathcal{D}^n)$ satisfies Jones' compatibility
    conditions~\textrefp[J]{enu:j1}--\textrefp[J]{enu:j4} with constant
    $\kappa_J=(1-\frac{\eta_1}{2})^{-1}$.  Furthermore, in \textref[Step~]{step:local-primary:1} we
    showed that $(\mathcal{B}_K : K\in \mathcal{D}^{n_1})$ satisfies Jones' compatibility
    conditions~\textrefp[J]{enu:j1}--\textrefp[J]{enu:j4} with constant $\kappa_J=(1-\eta_1)^{-1}$.
    Consequently, if we put
    \begin{equation}\label{eq:proof:thm:local-primary:tilde-coll:1}
      \mathcal{\widetilde B}_I
      = \bigcup_{B_K\in\mathcal{C}_I} \mathcal{B}_K
      \quad\text{and}\quad
      \widetilde B_I
      = \bigcup \mathcal{\widetilde B}_I,
      \qquad I\in\mathcal{D}^n,
    \end{equation}
    the reiteration Theorem~\ref{thm:projection-iteration} implies that
    $(\mathcal{\widetilde B}_I : I\in\mathcal{D}^n)$,
    satisfies~\textrefp[J]{enu:j1}--\textrefp[J]{enu:j4} with constant $\kappa_J=(1-\eta_1)^{-2}$.
  \end{proofstep}

  \begin{proofstep}\hfill\\
    Define the block basis $(\widetilde b_I : I\in\mathcal{D}^n)$ of $(b_K:K\in\mathcal{D}^{n_1})$
    by putting
    \begin{equation}\label{eq:proof:thm:local-primary:bb:1}
      \widetilde b_I
      = \sum_{B_K\in\mathcal{C}_I} b_K
      = \sum_{B_K\in\mathcal{C}_I} \sum_{Q\in\mathcal{B}_K} h_Q
      = \sum_{Q\in\mathcal{\widetilde B}_I} h_Q,
      \qquad I\in\mathcal{D}^n.
    \end{equation}
    Now, let $I\in\mathcal{D}^n$ be fixed.  By~\textrefp[J]{enu:j2},
    \eqref{eq:proof:thm:local-primary:large-coll-diag} and~\eqref{eq:proof:thm:local-primary-ii}, we
    obtain the diagonal estimate
    \begin{equation*}
      \begin{aligned}
        \langle H \widetilde b_I, \widetilde b_I\rangle & \geq \frac{1}{2}\|\widetilde b_I\|_2^2 -
        \sum_{\substack{B_K,B_{L}\in\mathcal{C}_I\\\mathcal{O}(K)<\mathcal{O}(L)}}
        |\langle H b_K, b_{L}\rangle| + |\langle b_{L}, H^*b_K\rangle|\\
        & \geq \frac{1}{2}\|\widetilde b_I\|_2^2 - \eta_1 \sum_{B_{L}\in\mathcal{C}_I} \|b_{L}\|_2^2
        = \big(\frac{1}{2}-\eta_1\big)\|\widetilde b_I\|_2^2.
      \end{aligned}
    \end{equation*}
    We summarize what we proved so far:
    \begin{equation}\label{eq:proof:thm:local-primary:bb:diag}
      \langle H \widetilde b_I, \widetilde b_I\rangle
      \geq \big(\frac{1}{2}-\eta_1\big)\|\widetilde b_I\|_2^2,
      \qquad I\in\mathcal{D}^n.
    \end{equation}

    For the off diagonal estimate, we define
    \begin{equation*}
      \mathcal{C}
      = \bigcup\big\{\mathcal{C}_J : J\in\mathcal{D}^n,\ \mathcal{O}(J) < \mathcal{O}(I)\big\}.
    \end{equation*}
    and note that by~\textrefp[J]{enu:j2} we have the estimate
    \begin{equation}\label{eq:proof:thm:local-primary:off-diag:1}
      \sum_{\substack{J\in\mathcal{D}^n\\\mathcal{O}(J)<\mathcal{O}(I)}}
      |\langle H \widetilde b_J, \widetilde b_I\rangle|
      \leq \sum_{B_L\in\mathcal{C}_I}
      \sum_{\substack{B_K\in\mathcal{C}\\\mathcal{O}(K) < \mathcal{O}(L)}}
      |\langle H b_K, b_L\rangle|
      +  \sum_{B_L\in\mathcal{C}_I}
      \sum_{\substack{B_K\in\mathcal{C}\\\mathcal{O}(K) > \mathcal{O}(L)}}
      |\langle b_K, H^*b_L\rangle|.
    \end{equation}
    Estimating the first sum on the right hand side
    of~\eqref{eq:proof:thm:local-primary:off-diag:1} by~\eqref{eq:proof:thm:local-primary-ii} yields
    \begin{equation}\label{eq:proof:thm:local-primary:off-diag:2}
      \sum_{B_L\in\mathcal{C}_I}
      \sum_{\substack{B_K\in\mathcal{C}\\\mathcal{O}(K) < \mathcal{O}(L)}}
      |\langle H b_K, b_L\rangle|
      \leq \eta_1 \sum_{B_L\in\mathcal{C}_I} \|b_L\|_2^2
      = \eta_1 \|\widetilde b_I\|_2^2.
    \end{equation}
    The latter equality follows from~\textrefp[J]{enu:j2}.
    By~\eqref{eq:proof:thm:local-primary-ii}, we obtain the following estimate for the second term
    on the right hand side of~\eqref{eq:proof:thm:local-primary:off-diag:1}:
    \begin{equation}\label{eq:proof:thm:local-primary:off-diag:3}
      \sum_{B_L\in\mathcal{C}_I}
      \sum_{\substack{B_K\in\mathcal{C}\\\mathcal{O}(K) > \mathcal{O}(L)}}
      |\langle b_K, H^*b_L\rangle|
      =  \sum_{B_K\in\mathcal{C}}
      \sum_{\substack{B_L\in\mathcal{C}_I\\\mathcal{O}(L) < \mathcal{O}(K)}}
      |\langle b_K, H^*b_L\rangle|
      \leq \eta_1 \sum_{B_K\in\mathcal{C}} \|b_K\|_2^2.
    \end{equation}
    \textrefp[J]{enu:j2} and~\eqref{eq:proof:thm:local-primary:block-basis:i} gives us
    \begin{equation}\label{eq:proof:thm:local-primary:off-diag:4}
      \sum_{B_K\in\mathcal{C}} \|b_K\|_2^2
      \leq \sum_{B_K\in\mathcal{C}} |K|
      \leq \sum_{J\in\mathcal{D}^n} |C_J|
      \leq n |C_{[0,1)}|.
    \end{equation}
    Combining~\eqref{eq:proof:thm:local-primary:off-diag:3}
    with~\eqref{eq:proof:thm:local-primary:off-diag:4} yields
    \begin{equation}\label{eq:proof:thm:local-primary:off-diag:5}
      \sum_{B_L\in\mathcal{C}_I}
      \sum_{\substack{B_K\in\mathcal{C}\\\mathcal{O}(K) > \mathcal{O}(L)}}
      |\langle b_K, H^*b_L\rangle|
      \leq \eta_1 n |C_{[0,1)}|.
    \end{equation}
    Collecting the estimates~\eqref{eq:proof:thm:local-primary:off-diag:5}
    \eqref{eq:proof:thm:local-primary:off-diag:2} and~\eqref{eq:proof:thm:local-primary:off-diag:1}
    we obtain
    \begin{equation}\label{eq:proof:thm:local-primary:off-diag:6}
      \sum_{\substack{J\in\mathcal{D}^n\\\mathcal{O}(J)<\mathcal{O}(I)}}
      |\langle H \widetilde b_J, \widetilde b_I\rangle|
      \leq 2 \eta_1 n |C_{[0,1)}|
    \end{equation}
    Repeating the above argument with the roles of $H$ and $H^*$ reversed yields
    \begin{equation}\label{eq:proof:thm:local-primary:off-diag:7}
      \sum_{\substack{J\in\mathcal{D}^n\\\mathcal{O}(J)<\mathcal{O}(I)}}
      |\langle \widetilde b_I, H^* \widetilde b_J\rangle|
      \leq 2 \eta_1 n |C_{[0,1)}|
    \end{equation}
    Adding~\eqref{eq:proof:thm:local-primary:off-diag:6}
    and~\eqref{eq:proof:thm:local-primary:off-diag:7} yields our desired off-diagonal estimate
    \begin{equation}\label{eq:proof:thm:local-primary:off-diag:8}
      \sum_{\substack{J\in\mathcal{D}^n\\\mathcal{O}(J)<\mathcal{O}(I)}}
      |\langle H \widetilde b_J, \widetilde b_I\rangle|
      + |\langle \widetilde b_I, H^* \widetilde b_J\rangle|
      \leq 4 \eta_1 n |C_{[0,1)}|,
      \qquad I\in\mathcal{D}^n.
    \end{equation}
  \end{proofstep}

  \begin{proofstep}\hfill\\
    As usual, we identify $\widetilde b_i = \widetilde b_I$, whenever $\mathcal{O}(I)=i$.  Thus,
    \eqref{eq:proof:thm:local-primary:off-diag:8} and~\eqref{eq:proof:thm:local-primary:bb:diag}
    read as follows:
    \begin{subequations}\label{eq:proof:thm:local-primary:all-estimates}
      \begin{align}
        \sum_{j=1}^{i-1}
        |\langle H \widetilde b_j, \widetilde b_i\rangle|
        + |\langle \widetilde b_i, H^* \widetilde b_j\rangle|
        & \leq 4 \eta_1 n |C_{[0,1)}|,
        && 1\leq i \leq 2^{n+1}-1,
           \label{eq:proof:thm:local-primary:all-estimates:a}\\
        \langle H \widetilde b_i, \widetilde b_i\rangle
        & \geq \big(\frac{1}{2}-\eta_1\big)\|\widetilde b_i\|_2^2,
        && 1\leq i \leq 2^{n+1}-1.
           \label{eq:proof:thm:local-primary:all-estimates:b}
      \end{align}
      Moreover, by~\textrefp[J]{enu:j2} and~\eqref{eq:proof:thm:local-primary:coll-prop:2}, we have
      that
      \begin{equation}\label{eq:proof:thm:local-primary:all-estimates:c}
        \|\widetilde b_i\|_2^2 = \|\widetilde b_I\|_2^2
        \geq (1-\frac{\eta_1}{4}) |I| |C_{[0,1)}|
        \geq (1-\frac{\eta_1}{4}) 2^{-n} |C_{[0,1)}|,
      \end{equation}
      for all $1\leq i \leq 2^{n+1}-1$ and $I\in\mathcal{D}^n$ with $\mathcal{O}(I)=2^i$.
    \end{subequations}
    Comparing~\eqref{eq:proof:thm:local-primary:all-estimates}
    with~\eqref{eq:proof:local-factor:ii}, the only relevant difference is the presence of the
    additional factor $|C_{[0,1)}|$ on the right hand sides
    of~\eqref{eq:proof:thm:local-primary:all-estimates:a}
    and~\eqref{eq:proof:thm:local-primary:all-estimates:c}.  We will now repeat the proof of
    Theorem~\ref{thm:local-factor} with $\widetilde b_i$ in place of $b_i^{(\varepsilon)}$.
    By~\eqref{eq:proof:thm:local-primary:tilde-coll:1} in~\textref[Step~]{step:local-primary:2},
    $(\mathcal{\widetilde B}_I : I\in\mathcal{D}^n)$
    satisfies~\textrefp[J]{enu:j1}--\textrefp[J]{enu:j4} with constant $\kappa_J=(1-\eta_1)^{-2}$.
    Thus, by~\eqref{eq:proof:thm:local-primary:bb:1} and Theorem~\ref{thm:projection}, the operators
    $\widetilde B : SL^\infty_n\to SL^\infty_N$ and $\widetilde Q : SL^\infty_N\to SL^\infty_n$
    given by
    \begin{equation}\label{eq:proof:thm:local-primary:B+Q:operators}
      \widetilde B f
      = \sum_{i=1}^{2^{n+1}-1} \frac{\langle f, h_i\rangle}{\|h_i\|_2^2} \widetilde b_i
      \qquad\text{and}\qquad
      \widetilde Q g
      = \sum_{i=1}^{2^{n+1}-1}
      \frac{\langle g, \widetilde b_i\rangle}{\|\widetilde b_i\|_2^2} h_i
    \end{equation}
    satisfy the estimates
    \begin{equation}\label{eq:proof:thm:local-primary:B+Q:estimates}
      \|\widetilde Bf \|_{SL^\infty} \leq \|f\|_{SL^\infty}
      \qquad\text{and}\qquad
      \|\widetilde Qg \|_{SL^\infty} \leq (1-\eta_1)^{-1}\|g\|_{SL^\infty},
    \end{equation}
    for all $f\in SL^\infty_n$, $g\in SL^\infty_N$.

    Moreover, the diagram
    \begin{equation}\label{eq:proof:thm:projection:diagram}
      \vcxymatrix{SL^\infty_n \ar[rr]^{\Id_{SL^\infty_n}} \ar[rd]_{\widetilde B} & & SL^\infty_n\\
        &  SL^\infty_N  \ar[ru]_{\widetilde Q} &
      }
    \end{equation}
    is commutative.  By~\eqref{eq:proof:thm:local-primary:B+Q:operators}, the bounded projection
    $\widetilde P : SL^\infty_N\to SL^\infty_N$ given by $\widetilde P = \widetilde B \widetilde Q$
    has the form
    \begin{equation}\label{eq:proof:thm:local-primary:P:definition}
      \widetilde P f
      = \sum_{i=1}^{2^{n+1}-1}
      \frac{\langle f, \widetilde b_i\rangle}{\|\widetilde b_i\|_2^2} \widetilde b_i,
      \qquad f\in SL^\infty_N,
    \end{equation}
    and is therefore an orthogonal projection.  By~\eqref{eq:proof:thm:local-primary:B+Q:estimates},
    $\widetilde P$ satisfies the estimate
    \begin{equation}\label{eq:proof:thm:local-primary:P:estimate}
      \|\widetilde P f\|_{SL^\infty}
      \leq (1-\eta_1)^{-1}\|f\|_{SL^\infty},
      \qquad f\in SL^\infty_N.
    \end{equation}

    Now, we define the subspace $\widetilde Y$ of $SL^\infty_N$ by
    \begin{equation*}
      \widetilde Y = \Big\{g = \sum_{i=1}^{2^{n+1}-1} a_i \widetilde b_i :
      a_i\in \mathbb{R}\Big\},
    \end{equation*}
    which is the image of the projection $\widetilde P$.
    By~\eqref{eq:proof:thm:local-primary:B+Q:estimates} and~\eqref{eq:proof:thm:projection:diagram},
    we obtain the following commutative diagram:
    \begin{equation}\label{eq:proof:thm:local-primary:commutative-diagram:preimage}
      \vcxymatrix{SL^\infty_n \ar[r]^{\Id_{SL^\infty_n}} \ar[d]_{\widetilde B} & SL^\infty_n\\
        \widetilde Y \ar[r]_{\Id_{\widetilde Y}} & \widetilde Y \ar[u]_{\widetilde Q|\widetilde Y}},
      \qquad \|\widetilde B\| \|\widetilde Q|\widetilde Y\| \leq (1-\eta_1)^{-1}.
    \end{equation}
    Now, define $\widetilde U : SL^\infty_N\to \widetilde Y$ by
    \begin{equation}\label{eq:proof:thm:local-primary:almost-inverse}
      \widetilde U f = \sum_{i=1}^{2^{n+1}-1}
      \frac{\langle f, \widetilde b_i\rangle}
      {\langle H \widetilde b_i, \widetilde b_i\rangle}
      \widetilde b_i,
      \qquad f\in SL^\infty_N.
    \end{equation}
    and note that by~\eqref{eq:proof:thm:local-primary:all-estimates:b}, the $1$-unconditionality of
    the Haar system in $SL^\infty$ and~\eqref{eq:proof:thm:local-primary:P:estimate}, the operator
    $\widetilde U$ has the upper bound
    \begin{equation}\label{eq:proof:thm:local-primary:almost-inverse:bounded}
      \|\widetilde U : SL^\infty_N\to \widetilde Y\|
      \leq \frac{2}{1-3\eta_1}.
    \end{equation}
    Observe that for all $g = \sum_{i=1}^{2^{n+1}-1} a_i \widetilde b_i \in \widetilde Y$, the
    following identity is true:
    \begin{equation}\label{eq:proof:thm:local-primary:crucial-identity}
      \widetilde UHg - g
      = \sum_{i=1}^{2^{n+1}-1}
      \sum_{j=1}^{i-1} a_j
      \frac{\langle H \widetilde b_j, \widetilde b_i\rangle}
      {\langle H \widetilde b_i, \widetilde b_i\rangle}
      \widetilde b_i
      + \sum_{i=1}^{2^{n+1}-1} \sum_{j=1}^{i-1} a_i
      \frac{\big\langle \widetilde b_i, H^* \widetilde b_j\big\rangle}
      {\langle H\widetilde b_j, \widetilde b_j\rangle}
      \widetilde b_j.
    \end{equation}
    Using $|a_j| \leq \|g\|_{SL^\infty}$ together with~\eqref{eq:proof:local-primary:eta_1},
    \eqref{eq:proof:thm:local-primary:all-estimates}
    and~\eqref{eq:proof:thm:local-primary:crucial-identity} yields
    \begin{equation}\label{eq:proof:thm:local-primary:crucial-inequality}
      \|\widetilde UHg - g\|_{SL^\infty}
      \leq \eta_1 4^{n+3}n  \|g\|_{SL^\infty}.
    \end{equation}
    Finally, let $J : \widetilde Y\to SL^\infty_N$ denote the operator given by $Jy = y$.  By our
    choice of $\eta_1$ in~\eqref{eq:proof:local-primary:eta_1}, the operator
    $\widetilde V : SL^\infty_N\to \widetilde Y$ given by
    $\widetilde V=(\widetilde UHJ)^{-1}\widetilde U$ is well defined.  Thus, the following diagram
    is commutative:
    \begin{equation}\label{eq:proof:thm:local-primary:commutative-diagram:image}
      \vcxymatrix{
        \widetilde Y \ar[rr]^{\Id_{\widetilde Y}} \ar[dd]_J \ar[rd]_{\widetilde UHJ} & & \widetilde Y\\
        & \widetilde Y \ar[ru]^{(\widetilde UHJ)^{-1}} &\\
        SL^\infty_N \ar[rr]_H & & SL^\infty_N \ar[lu]_{\widetilde U} \ar[uu]_{\widetilde V}
      }
      \qquad \|J\|\|\widetilde V\| \leq (1-\eta_1)(1+\eta)/\delta.
    \end{equation}
    The estimate for $\widetilde V$ follows from the choice we made for $\eta_1$
    in~\eqref{eq:proof:local-primary:eta_1}.  Merging the
    diagrams~\eqref{eq:proof:thm:local-primary:commutative-diagram:preimage}
    and~\eqref{eq:proof:thm:local-primary:commutative-diagram:image} concludes the proof.\qedhere
  \end{proofstep}
\end{myproof}

\section{Direct sums of $SL^\infty_n$ spaces are primary}\label{sec:primary}
\noindent
First, we establish that $SL^\infty$ is isomorphic to the $\ell^\infty$ direct sum of its finite
dimensional building blocks $SL^\infty_n$.  Second, we combine the finite dimensional
factorization results Theorem~\ref{thm:local-primary} for $SL^\infty_n$, $n\in\mathbb{N}_0$ to obtain the
factorization result Theorem~\ref{thm:primary} for $\bigl( \sum_{n\in\mathbb{N}_0} SL^\infty_n\bigr)_r$,
$1\leq r\leq \infty$.  Consequently, we obtain that $SL^\infty$ is primary.

\subsection{Isomorphisms and non-isomorphisms of direct sums of
  \bm{$SL^{\infty}_n$}}\label{subsec:isos}\hfill

\noindent
We show that taking direct sums of $SL^\infty_n$ with different parameters produces isomorphically
different spaces, and that $SL^\infty$ is isomorphic to the $\ell^\infty$ direct sum of
$SL^\infty_n$.  We give two proofs for the latter fact: one using the Hahn-Banach theorem, and
another using a compactness argument.  Both rely on Pe{\l}czy{\'n}ski's decomposition
method~\cite{pelczynski:1960}; we refer the reader to~\cite{wojtaszczyk:1991}.

\begin{lem}\label{lem:primary}
  The spaces $\big( \sum_{n\in\mathbb{N}_0} SL^\infty_n \big)_r$, $1\leq r \leq \infty$, are all
  mutually non-isomorphic.  The spaces $SL^\infty$ and
  $\big( \sum_{n\in\mathbb{N}_0} SL^\infty_n \big)_\infty$ are isomorphic.
\end{lem}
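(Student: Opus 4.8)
The plan is to prove the two assertions separately: the mutual non-isomorphism by soft linear invariants, and the identity $SL^\infty\cong X^{(\infty)}$ (I write $X^{(r)}=\big(\sum_n SL^\infty_n\big)_r$) by Pe{\l}czy{\'n}ski's decomposition method~\cite{pelczynski:1960,wojtaszczyk:1991}, realising all the required complementations by explicit norm-one projections built from disjointly supported families of dyadic intervals together with the $1$-unconditionality of the Haar system.

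\emph{Mutual non-isomorphism.} First, $X^{(\infty)}$ is non-separable: choosing a norm-one vector $v_n\in SL^\infty_n$ for each $n$, the map $(a_n)\mapsto(a_nv_n)_n$ is an isometric embedding of $\ell^\infty$. On the other hand $X^{(r)}$ is separable for every $1\le r<\infty$, so $X^{(\infty)}$ is isomorphic to none of them. For $1\le r<s<\infty$ I would argue as follows. On one side, $\ell^r$ embeds isometrically into $X^{(r)}$: picking one norm-one vector in each $SL^\infty_n$, their closed linear span is isometric to $\ell^r$ because the coordinate subspaces are ``$\ell^r$-orthogonal'' in $X^{(r)}$. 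On the other side, every infinite-dimensional subspace of $X^{(s)}$ contains an isomorphic copy of $\ell^s$: the spaces $(SL^\infty_n)_n$ form a monotone finite-dimensional decomposition of $X^{(s)}$, every normalised block sequence of that decomposition is \emph{isometrically} equivalent to the unit vector basis of $\ell^s$ (disjointness of the index-blocks makes the $\ell^s$-sum collapse exactly), and a routine gliding-hump perturbation produces such a block sequence inside any prescribed infinite-dimensional subspace. Hence $X^{(r)}\cong X^{(s)}$ would force $\ell^r$ to embed into $X^{(s)}$ and therefore $\ell^s$ to embed into a subspace of $\ell^r$, contradicting the classical fact that $\ell^s\hookrightarrow\ell^r$ forces $r=s$.

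\emph{The isomorphism $SL^\infty\cong X^{(\infty)}$.} Write $Sf=\big(\sum_J a_J^2h_J^2\big)^{1/2}$ for the square function of $f=\sum_J a_Jh_J$, and fix pairwise disjoint dyadic intervals $I_0,I_1,\dots$. If we place on $I_n$ the span of the Haar functions $h_J$ with $J\subset I_n$ lying in the first $n+1$ dyadic levels below $I_n$, then the square function of any sum $\sum_n f_n$ of such pieces is the disjoint union of the individual square functions; hence this span is isometric to $X^{(\infty)}$ and the corresponding Haar restriction is a norm-one projection of $SL^\infty$ onto it. Placing instead a \emph{full} copy of $SL^\infty$ on each $I_n$ exhibits $W:=\big(\sum_k SL^\infty\big)_\infty$ as a $1$-complemented subspace of $SL^\infty$. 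Conversely, $SL^\infty$ is $1$-complemented in $X^{(\infty)}$: the map $\Phi f=(P_nf)_n$, with $P_n$ the Haar projection of $SL^\infty$ onto $SL^\infty_n$, is an isometry since $S(P_nf)\uparrow Sf$ pointwise, and it has a norm-$\le 1$ left inverse $\Psi$ sending $(g_n)_n$ to $\sum_J\big(\lim_{n\to\mathcal U}\langle g_n,h_J\rangle/|J|\big)h_J$ for a fixed free ultrafilter $\mathcal U$; here $\sup_n\|g_n\|_{SL^\infty}<\infty$ controls every partial square function of the limiting vector (via $|\langle g_n,h_J\rangle|\le\|g_n\|_{SL^\infty}|J|$ and $Sg_n\le\|g_n\|_{SL^\infty}$), so $\Psi(g_n)_n\in SL^\infty$ and $\Phi\Psi$ is the desired projection; this is the ``compactness'' argument, while a variant extracting coherent coordinate functionals by Hahn--Banach gives the second proof. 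Finally $W$ is trivially isomorphic to its own countable $\ell^\infty$-sum. Pe{\l}czy{\'n}ski's method applied to $W$ and $SL^\infty$ (mutually complemented by the above, $SL^\infty$ being moreover a summand of $W$, and $W$ being isomorphic to its own $\ell^\infty$-sum) yields $SL^\infty\cong W$, so $SL^\infty$ is isomorphic to its own $\ell^\infty$-sum; applying the method once more to $SL^\infty$ and $X^{(\infty)}$, which are mutually complemented by the two constructions above, gives $SL^\infty\cong X^{(\infty)}$.

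\emph{Main difficulty.} The crux is constructing the projection of $X^{(\infty)}$ onto a copy of $SL^\infty$: one must choose the right limiting device (ultrafilter limits of Haar coefficients, or Hahn--Banach extensions of the coordinate functionals) and then verify carefully that the limiting vector genuinely lies in $SL^\infty$ and that the map is an honest left inverse of the isometric embedding $\Phi$. Everything else is bookkeeping, the only twist being that neither $SL^\infty$ nor $X^{(\infty)}$ is \emph{visibly} isomorphic to its own $\ell^\infty$-sum, which forces us to route through the auxiliary ``absorbing'' space $W=\big(\sum_k SL^\infty\big)_\infty$.
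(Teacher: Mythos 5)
Your proposal is correct, and for the isomorphism $SL^\infty\cong\bigl(\sum_n SL^\infty_n\bigr)_\infty$ it follows essentially the paper's route: the same two mutually complemented embeddings (scaled copies of the $SL^\infty_n$ placed on disjoint dyadic intervals, complemented by the norm-one Haar coordinate projection, and the truncation map $f\mapsto(P_nf)_n$ with a left inverse built from a limiting device on the Haar coefficients), followed by Pe{\l}czy{\'n}ski's decomposition method. The differences are worth recording. For the left inverse the paper takes a Hahn--Banach functional $L$ on $\ell^\infty$ with $L(1,1,\dots)=1$ and, since $L$ is not multiplicative, must verify $\|Q\|=1$ via the identity $\sum_J\bigl(L(a_{\cdot,J})\bigr)^2=L\bigl(\sum_J L(a_{\cdot,J})\,a_{\cdot,J}\bigr)$ and Cauchy--Schwarz; your ultrafilter limit does commute with products of bounded sequences, so the chain estimate $\sum_{J\supseteq I_0}b_J^2=\lim_{\mathcal U}\sum_{J\supseteq I_0}a_{n,J}^2\le\sup_n\|g_n\|_{SL^\infty}^2$ is immediate -- this is closer in spirit to the compactness/net argument the paper offers in its remark, and it slightly simplifies that step. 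In the final bookkeeping the paper uses $X^{(\infty)}\cong\bigl(\sum_m X^{(\infty)}\bigr)_\infty$ directly, whereas you route through $W=\bigl(\sum_k SL^\infty\bigr)_\infty$, which visibly absorbs its own $\ell^\infty$-sum; this costs one extra application of the decomposition method but spares you the re-indexing isomorphism that the paper asserts without proof. Finally, for the mutual non-isomorphism the paper simply cites a Pitt-type theorem for direct sums of finite dimensional spaces, while you prove the statement from scratch: separability distinguishes $r=\infty$ from $r<\infty$, and for $1\le r<s<\infty$ your combination of an isometric $\ell^r$ inside $X^{(r)}$, the fact that disjointly blocked normalized sequences in $X^{(s)}$ are isometrically the $\ell^s$ basis, a gliding hump (legitimate since for $s<\infty$ the tails of the coordinate expansion vanish in norm), and the classical non-embedding of $\ell^s$ into $\ell^r$ is exactly the standard argument behind the cited result; it is sound and has the advantage of keeping the lemma self-contained.
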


\begin{proof}
  By a version of Pitt's theorem for direct sums of finite dimensional Banach spaces (see
  e.g.~\cite[Theorem~5.6]{lechner:2016-factor-mixed} for more details), the spaces
  $\big( \sum_{n\in\mathbb{N}_0} SL^\infty_n \big)_r$, $1\leq r \leq \infty$, are mutually
  non-isomorphic.

  Using Pe{\l}czy{\'n}ski's decomposition method, we will now show that $SL^\infty$ is isomorphic to
  $\big( \sum_{n\in\mathbb{N}_0} SL^\infty_n \big)_\infty$.  First, we will show that $SL^\infty$
  contains a complemented copy of $\big( \sum_{n\in\mathbb{N}_0} SL^\infty_n \big)_\infty$.  To this
  end, consider the collections $\mathcal A_n\subset \mathcal{D}$ given by
  \begin{equation*}
    \mathcal A_n
    = \{ I\in \mathcal{D} : I\subset [1-2^{-n},1-2^{-n-1})\},
    \qquad n\in \mathbb{N}_0,
  \end{equation*}
  and define $E : \big( \sum_{n\in\mathbb{N}_0} SL^\infty_n \big)_\infty\to SL^\infty$ by
  \begin{equation*}
    (f_n)_{n=0}^\infty
    \mapsto \sum_{n=0}^\infty  \chi_{[1-2^{-n},1-2^{-n-1})} f_n\circ \varphi_n,
  \end{equation*}
  where $\varphi_n$ is the affine linear transformation that maps $[1-2^{-n},1-2^{-n-1})$ into
  $[0,1)$, and therefore $\varphi_n(\mathcal{A}_n)=\mathcal{D}$.  Note that
  \begin{align*}
    \|E((f_n)_{n=0}^\infty)\|_{SL^\infty}
    & = \sup_{n\in\mathbb{N}_0} \|\chi_{[1-2^{-n},1-2^{-n-1})} f_n\circ \varphi_n\|_{SL^\infty}
      = \sup_{n\in\mathbb{N}_0} \|f_n\|_{SL^\infty}\\
    & = \big\|(f_n)_{n=0}^\infty\big\|_{(\sum_{n\in\mathbb{N}_0} SL^\infty_n )_\infty},
  \end{align*}
  for all $(f_n)_{n=0}^\infty\in \big( \sum_{n\in\mathbb{N}_0} SL^\infty_n \big)_\infty$, hence, $E$
  is an isometric embedding.  By the $1$-unconditionality of the Haar system in $SL^\infty$, the
  projection $P : SL^\infty\to SL^\infty$ given by
  $f\mapsto \sum_{n=0}^\infty \sum_{I\in\mathcal{A}_n} \langle f, h_I\rangle h_I|I|^{-1}$ has norm
  $1$, and $P(SL^\infty) = E \bigl( ( \sum_{n\in\mathbb{N}_0} SL^\infty_n )_\infty\bigr)$.

  Second, we will now show that $\big( \sum_{n\in\mathbb{N}_0} SL^\infty_n \big)_\infty$ contains
  a complemented copy of $SL^\infty$.  Let
  $G: SL^\infty\to \bigl( \sum_{n\in\mathbb{N}_0} SL^\infty_n \bigr)_\infty$ denote the norm $1$
  operator given by
  \begin{equation*}
    \sum_{I\in\mathcal{D}}a_I h_I\mapsto \Bigl(\sum_{I\in\mathcal{D}^n} a_I h_I\Bigr)_{n=0}^\infty.
  \end{equation*}
  Let $L : \ell^\infty\to \mathbb{R}$ be a norm $1$ functional such that $L(1,1,1,\dots) = 1$.  Now,
  we define the operator $Q : \bigl( \sum_{n\in\mathbb{N}_0} SL^\infty_n \bigr)\to SL^\infty$ by
  \begin{equation*}
    \Bigl(\sum_{I\in\mathcal{D}^n}a_{n,I} h_I\Bigr)_{n=0}^\infty
    \mapsto \sum_{I\in\mathcal{D}} L\bigl( (a_{n,I})_{n=0}^\infty \bigr) h_I,
  \end{equation*}
  and claim that $Q$ has norm $1$.  Let $f_n = \sum_{I\in\mathcal{D}^n} a_{n,I}h_I$,
  $n\in\mathbb{N}_0$ be such that $(f_n)_{n=0}^\infty$ is in the unit ball of
  $\big( \sum_{n\in\mathbb{N}_0} SL^\infty_n \big)_\infty$, i.e.
  $\sup_{n\in\mathbb{N}_0}\|f_n\|_{SL^\infty}\leq 1$.  Assume that
  $\bigr\| Q \bigl((f_n)_{n=0}^\infty\bigr) \bigl\| > 1$.  Thus, there exists a $I_0\in\mathcal{D}$
  such that
  \begin{equation}\label{eq:proof:lem:iso:1}
    \sum_{\substack{I\in\mathcal{D}\\I\supset I_0}}
    \Bigl(L\bigl( (a_{n,I})_{n=0}^\infty \bigr)\Bigr)^2
    > 1.
  \end{equation}
  But then again, we have the estimate
  \begin{align*}
    \sum_{\substack{I\in\mathcal{D}\\I\supset I_0}}
    \Bigl(L\bigl( (a_{n,I})_{n=0}^\infty \bigr)\Bigr)^2
 & = L \Bigl(\sum_{\substack{I\in\mathcal{D}\\I\supset I_0}}L\bigl( (a_{k,I})_{k=0}^\infty \bigr)
    (a_{n,I})_{n=0}^\infty\Bigr)\\
 & \leq \sup_{n\in\mathbb{N}_0}\Bigl|\sum_{\substack{I\in\mathcal{D}\\I\supset I_0}}L\bigl(
    (a_{k,I})_{k=0}^\infty \bigr) a_{n,I}\Bigr|
    \leq \sup_{n,k\in\mathbb{N}_0}\Bigl|\sum_{\substack{I\in\mathcal{D}\\I\supset I_0}}
    a_{k,I}  a_{n,I}\Bigr|\\
 & \leq \sup_{n,k\in\mathbb{N}_0}\Bigl( \sum_{\substack{I\in\mathcal{D}\\I\supset I_0}}
    a_{k,I}^2\Bigr)^{1/2}\Bigl( \sum_{\substack{I\in\mathcal{D}\\I\supset I_0}}
    a_{n,I}^2\Bigr)^{1/2} \leq 1,
  \end{align*}
  which contradicts~\eqref{eq:proof:lem:iso:1}.  Hence, $\|Q\|$ has norm $1$.  By definition of $G$
  and $Q$, the following diagram is commutative:
  \begin{equation}\label{eq:proof:lem:iso:2}
    \vcxymatrix{
      SL^\infty \ar[rr]^{\Id_{SL^\infty}} \ar[dr]_G && SL^\infty\\
      & \big( \sum_{n\in\mathbb{N}_0} SL^\infty_n \big)_\infty \ar[ur]_Q& 
    }
    \qquad \|G\|,\|Q\| = 1.
  \end{equation}
  Consequently, $SL^\infty$ is isomorphic to a complemented subspace of
  $\big( \sum_{n\in\mathbb{N}_0} SL^\infty_n \big)_\infty$, as claimed.

  Finally, since $\big( \sum_{n\in\mathbb{N}_0} SL^\infty_n \big)_\infty$ is isomorphic to
  $\Bigl( \sum_{m\in\mathbb{N}_0}\bigl(\sum_{n\in\mathbb{N}_0} SL^\infty_n \bigr)_\infty
  \Bigr)_\infty$, Pe{\l}czy{\'n}ski's decomposition method (see e.g.\cite{wojtaszczyk:1991}) yields
  that $SL^\infty$ is isomorphic to $\big( \sum_{n\in\mathbb{N}_0} SL^\infty_n \big)_\infty$.
\end{proof}

\begin{rem}
  We want to point out that the construction of the operator $Q$ in the above proof could be
  replaced by a standard argument, which goes back to~\cite[Theorem~III.E.18]{wojtaszczyk:1991},
  \cite{wojtaszczyk:1979}, \cite{johnson:1972} and~\cite{stegall:1973}.  We will now present this
  alternative.  The following proof was taken from~\cite[Theorem~2.2.3]{mueller:2005} and adapted to
  fit our purpose.

  Let $C$ denote the closed unit ball of $SL^\infty$, and let $\mathcal{T}$ denote the smallest
  topology on $C$ such that every functional of the form
  $\langle \cdot, h_I\rangle : C\to \mathbb{R}$, $I\in\mathcal{D}$ is continuous.  We will now prove
  that $(C,\mathcal{T})$ is a compact topological space.  We endow $[-1,+1]^{\mathcal{D}}$ with the
  product topology $\mathcal{P}$, which, by Tychonov's theorem, is a compact topological space, and
  define the map $\Phi : (C,\mathcal{T})\to ([-1,+1]^{\mathcal{D}},\mathcal{P})$ by putting
  \begin{equation*}
    f\mapsto \Big(\frac{\langle f, h_I \rangle}{|I|}\Big)_{I\in\mathcal{D}}.
  \end{equation*}
  By~\eqref{eq:bracket-estimate}, $\Phi$ is well defined, and one can easily check that $\Phi$ is a
  topological embedding.  We will now verify that $\Phi(C)$ is closed.  To this end, let
  $(f_\alpha)$ denote a net in $C$, such that $\Phi(f_\alpha)$ converges to some
  $(a_I)_{I\in\mathcal{D}}\in [-1,+1]^{\mathcal{D}}$.  We define
  $f = \sum_{I\in\mathcal{D}} a_I h_I$, and assume that $\|f\|_{SL^\infty} > 1$.  Then there exists
  an $I_0\in\mathcal{D}$ such that
  \begin{equation*}
    \sum_{\substack{I\in\mathcal{D}\\I\supset I_0}} a_I^2
    > 1
  \end{equation*}
  We now write $(a_{\alpha,I})_{I\in\mathcal{D}} = \Phi(f_\alpha)$ for each $\alpha$, and note that
  by our hypothesis $a_{\alpha,I}\to a_I$, for each $I\in\mathcal{D}$.  Consequently, there exists
  an $\alpha_0$ such that
  \begin{equation*}
    \|f_{\alpha_0}\|_{SL^\infty}^2
    \geq \sum_{\substack{I\in\mathcal{D}\\I\supset I_0}} a_{\alpha_0,I}^2
    > 1,
  \end{equation*}
  which contradicts $f_{\alpha_0}\in C$.  Thus, we know that $\Phi(C)$ is a closed subset of the
  compact topological space $([-1,+1]^{\mathcal{D}},\mathcal{P})$, and is therefore itself compact.
  Since $\Phi$ is a topological embedding, $(C,\mathcal{T})$ is a compact topological space.

  As in the proof of Lemma~\ref{lem:primary}, let
  $G : SL^\infty\to \big( \sum_{n\in\mathbb{N}_0} SL^\infty_n \big)_\infty$ denote the isometric
  embedding given by
  \begin{equation*}
    f\mapsto \Bigl(\sum_{I\in\mathcal{D}^n}\frac{\langle f,h_I\rangle}{|I|}h_I\Bigr)_{n=0}^\infty.
  \end{equation*}
  Let $B$ denote the closed unit ball of $\big( \sum_{n\in\mathbb{N}_0} SL^\infty_n \big)_\infty$,
  and for each $m\in\mathbb{N}_0$ define $R_m : B\to C$ by
  \begin{equation*}
    (f_n)_{n=0}^\infty\mapsto f_m.
  \end{equation*}
  In other words, $R_m\in C^B$, $m\in\mathbb{N}_0$, and since $C^B$ is compact in the product
  topology by Tychonov's theorem, there exists a subnet $(R_\alpha)$ of $(R_m)_{m=1}^\infty$
  converging to, say, $R$.  This means that for each $(f_n)_{n=0}^\infty\in B$, we have that
  $R_\alpha \bigl((f_n)_{n=0}^\infty \bigr)\to R((f_n)_{n=0}^\infty)$ in the topology $\mathcal{T}$
  of $C$.  Now let $f\in C$, then
  \begin{equation*}
    \langle R(Gf), h_I \rangle
    = \lim_\alpha \langle R_\alpha(Gf), h_I \rangle
    = \langle f, h_I\rangle,
    \qquad I\in\mathcal{D}.
  \end{equation*}
  i.e.  $RG|C = \Id_C$.  Certainly, we can extend $R$ to an operator on
  $\big( \sum_{n\in\mathbb{N}_0} SL^\infty_n \big)_\infty$.  We claim that $R$ is bounded by $1$.
  Assume to the contrary, that there exists a $(f_n)_{n=0}^\infty\in B$ with
  $\bigl\|R\bigl((f_n)_{n=0}^\infty\bigr)\bigr\|_{SL^\infty} > 1$.  Then there is an interval
  $I_0\in\mathcal{D}$ such that
  \begin{equation*}
    \sum_{\substack{I\in\mathcal{D}\\I\supset I_0}}
    \bigg( \frac{\bigl\langle R\bigl((f_n)_{n=0}^\infty\bigr), h_I \bigr\rangle}{|I|} \bigg)^2
    > 1.
  \end{equation*}
  Since $R_\alpha \bigl((f_n)_{n=0}^\infty \bigr)\to R((f_n)_{n=0}^\infty)$ in $(C,\mathcal{T})$,
  there exists an $\alpha_0$ such that
  \begin{equation*}
    1 < \sum_{\substack{I\in\mathcal{D}\\I\supset I_0}} \bigg(
    \frac{\bigl\langle R_{\alpha_0}\bigl((f_n)_{n=0}^\infty\bigr), h_I \bigr\rangle}{|I|}
    \bigg)^2
    = \bigl\| R_{\alpha_0}\bigl((f_n)_{n=0}^\infty\bigr) \bigr\|_{SL^\infty}^2
    = \| f_{\alpha_0} \|_{SL^\infty}^2,
  \end{equation*}
  which contradicts $(f_n)_{n=0}^\infty\in B$.  Thus, $R$ has norm $1$.

  The above $R$ would be a suitable replacement for $Q$ in the proof of Lemma~\ref{lem:primary}.
\end{rem}

\subsection{Proof of Theorem~\ref{thm:primary}}\label{subsec:primary}\hfill

\noindent
When using Bourgain's localization method, see e.g.~\cite{bourgain:1983, blower:1990, mueller:2005,
  wark:2007, mueller:2012, lechner:mueller:2014}, we eventually have to pass from the local result
to the global result; in our case from Theorem~\ref{thm:local-primary} to Theorem~\ref{thm:primary} below, which
includes diagonalizing operators on the sum of the finite dimensional spaces.  For us, these sums
are $\bigl(\sum_{n\in\mathbb{N}_0} SL^\infty_n \bigr)_r$, $1\leq r \leq \infty$.  Due to a gliding
hump argument, this diagonalization process does not require any additional hypothesis if
$r < \infty$; if $r=\infty$ however, we require that the sequence of spaces
$(SL^\infty_n)_{n\in\mathbb{N}_0}$ has the property that projections almost annihilate finite
dimensional subspaces (which it has), see Definition~\ref{dfn:property-pafds},
Corollary~\ref{cor:projections-that-annihilate} and Remark~\ref{rem:property-pafds}.  For more details, we refer
the reader to~\cite[Section 5]{lechner:2016-factor-mixed}.

\begin{proof}[Proof of Theorem~\ref{thm:primary}]
  By Theorem~\ref{thm:local-primary}, the first part of the hypothesis
  of~\cite[Proposition~5.4]{lechner:2016-factor-mixed} is satisfied. Moreover by
  Remark~\ref{rem:property-pafds} for each $\eta > 0$, the sequence of finite dimensional Banach spaces
  $(SL^\infty_n)_{n\in\mathbb{N}_0}$ has the property that projections almost annihilate finite
  dimensional subspaces with constant $1+\eta$ (which is only needed in the case $r=\infty$). Thus,
  applying~\cite[Proposition~5.4]{lechner:2016-factor-mixed} yields the
  diagram~\eqref{eq:thm:primary}.

  It is easily verified that for fixed $1\leq r\leq \infty$, the Banach space $X^{(r)}$ is
  isomorphic to $\bigl( \sum_{m\in\mathbb{N}} X^{(r)} \bigr)_r$, hence, by Pe{\l}czy{\'n}ski's
  decomposition method (see e.g.~\cite{wojtaszczyk:1991}) and diagram~\eqref{eq:thm:primary}, we
  obtain that $X^{(r)}$ is primary.

  Finally, by Lemma~\ref{lem:primary}, $SL^\infty$ is isomorphic to the primary Banach space
  $X^{(\infty)}$, and is thereby itself primary.
\end{proof}

\subsection*{Acknowledgments}\hfill\\
\noindent
It is my pleasure to thank P.F.X.~Müller for many helpful discussions.  Supported by the Austrian
Science Foundation (FWF) Pr.Nr. P28352.

\bibliographystyle{abbrv}
\bibliography{bibliography}

\begin{thebibliography}{10}

\bibitem{andrew:1979}
A.~D. Andrew.
\newblock Perturbations of {S}chauder bases in the spaces {$C(K)$} and
  {$L^{p}$}, {$p>1$}.
\newblock {\em Studia Math.}, 65(3):287--298, 1979.

\bibitem{blower:1990}
G.~Blower.
\newblock The {B}anach space {$B(l^2)$} is primary.
\newblock {\em Bull. London Math. Soc.}, 22(2):176--182, 1990.

\bibitem{bourgain:1983}
J.~Bourgain.
\newblock On the primarity of {$H^{\infty }$}-spaces.
\newblock {\em Israel J. Math.}, 45(4):329--336, 1983.

\bibitem{gamlen:gaudet:1973}
J.~L.~B. Gamlen and R.~J. Gaudet.
\newblock On subsequences of the {H}aar system in {$L_{p}$} {$[1,\,1](1\leq
  p\leq \infty )$}.
\newblock {\em Israel J. Math.}, 15:404--413, 1973.

\bibitem{garsia:1973}
A.~M. Garsia.
\newblock {\em Martingale inequalities: {S}eminar notes on recent progress}.
\newblock W. A. Benjamin, Inc., Reading, Mass.-London-Amsterdam, 1973.
\newblock Mathematics Lecture Notes Series.

\bibitem{johnson:1972}
W.~B. Johnson.
\newblock A complementary universal conjugate {B}anach space and its relation
  to the approximation problem.
\newblock {\em Israel J. Math.}, 13:301--310 (1973), 1972.

\bibitem{jones:1985}
P.~W. Jones.
\newblock B{MO} and the {B}anach space approximation problem.
\newblock {\em Amer. J. Math.}, 107(4):853--893, 1985.

\bibitem{jones:mueller:2004}
P.~W. Jones and P.~F.~X. M{\"u}ller.
\newblock Conditioned {B}rownian motion and multipliers into {${\rm
  SL}^\infty$}.
\newblock {\em Geom. Funct. Anal.}, 14(2):319--379, 2004.

\bibitem{laustsen:lechner:mueller:2015}
N.~J. {Laustsen}, R.~{Lechner}, and P.~F.~X. {M{\"u}ller}.
\newblock {Factorization of the identity through operators with large
  diagonal}.
\newblock {\em ArXiv e-prints}, Sept. 2015.

\bibitem{lechner:2016-factor-mixed}
R.~{Lechner}.
\newblock {Factorization in mixed norm Hardy and BMO spaces}.
\newblock {\em Studia Math., to appear. Preprint available on ArXiv}.

\bibitem{lechner:2016:factor-SL}
R.~{Lechner}.
\newblock {Factorization in $SL^\infty$}.
\newblock {\em Israel J. Math., to appear. Preprint available on ArXiv}.

\bibitem{lechner:mueller:2014}
R.~Lechner and P.~F.~X. M{\"u}ller.
\newblock Localization and projections on bi-parameter {BMO}.
\newblock {\em Q. J. Math.}, 66(4):1069--1101, 2015.

\bibitem{lindenstrauss-tzafriri:1977}
J.~Lindenstrauss and L.~Tzafriri.
\newblock {\em Classical {B}anach spaces. {I}}.
\newblock Springer-Verlag, Berlin-New York, 1977.
\newblock Sequence spaces, Ergebnisse der Mathematik und ihrer Grenzgebiete,
  Vol. 92.

\bibitem{mueller:1988}
P.~F.~X. M{\"u}ller.
\newblock On projections in {$H^1$} and {BMO}.
\newblock {\em Studia Math.}, 89(2):145--158, 1988.

\bibitem{mueller:2005}
P.~F.~X. M{\"u}ller.
\newblock {\em Isomorphisms between {$H\sp 1$} spaces}, volume~66 of {\em
  Instytut Matematyczny Polskiej Akademii Nauk. Monografie Matematyczne (New
  Series) [Mathematics Institute of the Polish Academy of Sciences.
  Mathematical Monographs (New Series)]}.
\newblock Birkh\"auser Verlag, Basel, 2005.

\bibitem{mueller:2012}
P.~F.~X. M{\"u}ller.
\newblock Two remarks on primary spaces.
\newblock {\em Math. Proc. Cambridge Philos. Soc.}, 153(3):505--523, 2012.

\bibitem{pelczynski:1960}
A.~Pe{\l}czy{\'n}ski.
\newblock Projections in certain {B}anach spaces.
\newblock {\em Studia Math.}, 19:209--228, 1960.

\bibitem{stegall:1973}
C.~Stegall.
\newblock Banach spaces whose duals contain {$l_{1}(\Gamma )$} with
  applications to the study of dual {$L_{1}(\mu )$} spaces.
\newblock {\em Trans. Amer. Math. Soc.}, 176:463--477, 1973.

\bibitem{wark:2007}
H.~M. Wark.
\newblock The {$l^\infty$} direct sum of {$L^p$} {$(1<p<\infty)$} is primary.
\newblock {\em J. Lond. Math. Soc. (2)}, 75(1):176--186, 2007.

\bibitem{wojtaszczyk:1979}
P.~Wojtaszczyk.
\newblock On projections in spaces of bounded analytic functions with
  applications.
\newblock {\em Studia Math.}, 65(2):147--173, 1979.

\bibitem{wojtaszczyk:1991}
P.~Wojtaszczyk.
\newblock {\em Banach spaces for analysts}, volume~25 of {\em Cambridge Studies
  in Advanced Mathematics}.
\newblock Cambridge University Press, Cambridge, 1991.

\end{thebibliography}

\end{document}